    \def\tagform@#1{\maketag@@@{%
     \textbf{(\ignorespaces#1\unskip\@@italiccorr)}}}%
     \renewcommand{\eqref}[1]{\textup{\maketag@@@{(\ignorespaces%
          {\ref{#1}}\unskip\@@italiccorr)}}}
\theoremstyle:=definition,remark,plain\do{%
      \expandafter\g@addto@macro\csname th@\theoremstyle\endcsname{%
        \addtolength\thm@preskip\parskip
        }%
      }
  \theoremstyle{plain}
  \newtheorem{theorem}{Theorem}[section]
  \newtheorem{proposition}[theorem]{Proposition}
  \newtheorem{corollary}[theorem]{Corollary}
  \newtheorem{lemma}[theorem]{Lemma}
  \newtheorem{introthm}{Theorem}
  \newtheorem{introcor}[introthm]{Corollary}
  \theoremstyle{definition}
  \newtheorem{definition}[theorem]{Definition}
  \newtheorem{remark}[theorem]{Remark}
  \newtheorem{remarks}[theorem]{Remarks}
  \newtheorem*{claim*}{Claim}
  \newtheorem*{question*}{Question}
  \newtheorem*{answer*}{Answer}
  \newtheorem*{application*}{Application}
  \newtheorem*{notation}{Notation}
  \newcommand{\secref}[1]{Section~\ref{Sec:#1}}
  \newcommand{\thmref}[1]{Theorem~\ref{Thm:#1}}
  \newcommand{\corref}[1]{Corollary~\ref{Cor:#1}}
  \newcommand{\lemref}[1]{Lemma~\ref{Lem:#1}}
  \newcommand{\propref}[1]{Proposition~\ref{Prop:#1}}
  \newcommand{\remref}[1]{Remark~\ref{Rem:#1}}
  \newcommand{\figref}[1]{Figure~\ref{Fig:#1}}
  \newcommand{\defref}[1]{Definition~\ref{Def:#1}}
  \newcommand{\Z}{\ensuremath{\mathbb{Z}}\xspace}
  \newcommand{\R}{\ensuremath{\mathbb{R}}\xspace}
  \newcommand{\QH}{\ensuremath{\operatorname{QH}}\xspace} 
  \DeclareMathOperator{\scl}{scl}
  \newcommand{\set}[1]{\ensuremath{\left\{ {#1} \right\}}\xspace} 
  \newcommand{\abs}[1]{\ensuremath{\left\lvert {#1} \right\rvert}\xspace} 
  \newcommand{\gen}[1]{\ensuremath{\left\langle {#1}
      \right\rangle}\xspace} 
  \newcommand{\st}{\ensuremath{\,\, \colon \,\,}\xspace} 
  \newcommand{\from}{\ensuremath{\colon \thinspace}\xspace} 
  \newcommand{\into}{\ensuremath{\hookrightarrow}\xspace} 
  \renewcommand{\(}{\ensuremath{\big(}\xspace}
  \renewcommand{\)}{\ensuremath{\big)}\xspace}
  \newcommand{\g}{\ensuremath{\gamma}\xspace} 
  \newcommand{\bg}{\ensuremath{\overline{\gamma}}\xspace} 
  \renewcommand{\O}{\ensuremath{x_0}\xspace} 
  \newcommand{\calH}{\ensuremath{\mathcal{H}}\xspace}
  \newcommand{\half}{\ensuremath{\calH}\xspace} %
  \newcommand{\bH}{\ensuremath{\overline{H}}\xspace} 
  \newcommand{\bK}{\ensuremath{\overline{K}}\xspace}
  \newcommand{\balpha}{\ensuremath{\overline{\alpha}}\xspace}
  \newcommand{\bgamma}{\ensuremath{\overline{\gamma}}\xspace}
  \newcommand{\bsigma}{\ensuremath{\overline{\sigma}}\xspace}
  \newcommand{\trans}{\ensuremath{\pitchfork}\xspace} 
  \newcommand{\axis}{\ensuremath{A}\xspace} 
  \newcommand{\ess}{\ensuremath{X^{\text{ess}}}\xspace} 
  \newcommand{\mess}{\ensuremath{M^{\text{ess}}}\xspace} 
  \newcommand{\fix}{\ensuremath{X^{\text{fix}}}\xspace} 
  \newcommand{\ellip}{\ensuremath{X^{\text{ell}}}\xspace} 
  \newcommand{\param}{{\mathchoice{\mkern1mu\mbox{\raise2.2pt\hbox{$
  \centerdot$}}
  \mkern1mu}{\mkern1mu\mbox{\raise2.2pt\hbox{$\centerdot$}}\mkern1mu}{
  \mkern1.5mu\centerdot\mkern1.5mu}{\mkern1.5mu\centerdot\mkern1.5mu}}}
\begin{document}


  \title    {Effective quasimorphisms on right-angled Artin groups}
  \author   {Talia Fern\'os, Max Forester, and Jing Tao}
  \date{}

  \maketitle
  \thispagestyle{empty}

  \begin{abstract} 

    We construct new families of quasimorphisms on many groups acting on
    CAT(0) cube complexes. These quasimorphisms have a uniformly bounded
    defect of $12$, and they ``see'' all elements that act hyperbolically
    on the cube complex. We deduce that all such elements have stable
    commutator length at least 1/24. 

    The group actions for which these results apply include the standard
    actions of right-angled Artin groups on their associated CAT(0) cube
    complexes. In particular, every non-trivial element of a right-angled
    Artin group has stable commutator length at least 1/24. 

    These results make use of some new tools that we develop for the
    study of group actions on CAT(0) cube complexes: the essential
    characteristic set and equivariant Euclidean embeddings. 

  \end{abstract}

\section{Introduction} 
  
  \label{Sec:Intro}

  In this paper, we construct quasimorphisms on groups that admit actions on
  CAT(0) cube complexes. Our emphasis is on finding quasimorphisms that are
  both \emph{efficient} and \emph{effective}. By ``efficient'' we mean that
  the quasimorphisms have low defect. By ``effective'' we mean that the
  quasimorphisms take non-zero values on specified elements of the
  group. These two qualities, taken together, allow one to establish lower
  bounds for stable commutator length ($\scl$) in the group. 

  According to Bavard Duality \cite{Bavard}, if ${\varphi}$ is a
  homogeneous quasimorphism of defect at most $D$ and ${\varphi}(g) \geq
  1$, then $\scl(g) \geq 1/2D$. Thus, for the strongest bound on $\scl$,
  one needs to find effective quasimorphisms with the smallest possible
  defect. 

  The quasimorphisms we define have similarities with the
  ``non-overlapping'' counting quasimorphisms of Epstein and Fujiwara
  \cite{EpsteinFujiwara}, which in turn are a variation of the Brooks
  counting quasimorphisms on free groups \cite{Brooks}. If $X$ is a CAT(0)
  cube complex, there is a notion of a \emph{tightly nested segment} of
  half-spaces in $X$. If $G$ acts on $X$ \emph{non-transversely} (see 
  \defref{NonTransverse2}), then for each tightly nested segment $\g$ there
  is an associated counting quasimorphism $\varphi_\g$. This function
  counts non-overlapping copies (or $G$--translates) of $\g$ and $\bg$
  inside characteristic subcomplexes of elements of $G$. Using the
  \emph{median property} of CAT(0) cube complexes, we show that
  $\varphi_\g$ has defect at most $6$, and therefore its homogenization
  $\widehat{\varphi}_\g$ has defect at most $12$. (Note that this bound is
  independent of both the length of $\g$ and the dimension of $X$.) 

  We now have a large supply of efficient quasimorphisms, but it is by no
  means clear that any of them are non-trivial. Our main task, given an
  element $g \in G$, is to find a tightly nested segment $\g$ such that
  $\widehat{\varphi}_\g(g) \geq 1$. This will only be possible for suitable
  elements $g$; for instance, if $g$ is conjugate to $g^{-1}$, then $\scl(g)
  = 0$ and every homogeneous quasimorphism vanishes on $g$. 

  For our main result we consider cube complexes with group actions that
  have properties in common with the standard actions of right angled Artin
  groups on their associated CAT(0) cube complexes. These are called
  \emph{RAAG-like} actions; see \secref{RAAG-like} and \defref{raaglike}.
  Our main theorem is that for such actions, the desired segments $\g$ can
  be found for \emph{every} hyperbolic element $g$. Using Bavard Duality,
  we obtain:

  \begin{introthm} \label{Thm:Main}

    Let $X$ be a CAT(0) cube complex with a RAAG-like action by $G$. Then
    $\scl(g) \geq 1/24$ for every hyperbolic element $g\in G$. 

  \end{introthm}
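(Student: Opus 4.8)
The plan is to invoke Bavard Duality: to prove $\scl(g) \ge 1/24$ it suffices to produce a homogeneous quasimorphism $\widehat\varphi$ on $G$ of defect at most $12$ with $\widehat\varphi(g) \ge 1$. The reservoir of candidates is the family of counting quasimorphisms $\varphi_\g$ attached to tightly nested segments $\g$ of half-spaces of $X$: as noted above, the median property of a CAT(0) cube complex forces the defect of $\varphi_\g$ to be at most $6$ --- uniformly in $\g$ and in $\dim X$ --- so each homogenization $\widehat\varphi_\g$ has defect at most $12$. (This first half is ``soft'' and applies to every non-transverse action; it is the source of the efficient quasimorphisms.) Thus the entire content of the theorem is effectiveness: given a hyperbolic $g \in G$, one must construct a tightly nested segment $\g$ for which $\widehat\varphi_\g(g) \ge 1$.

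To construct $\g$ I would work with the essential characteristic set of $g$, the combinatorial replacement for an axis, which records exactly the half-spaces that $g$ ``crosses''. Pushing this picture through an equivariant Euclidean embedding arranges those half-spaces into a linear chain on which $g$ acts by a translation; a sufficiently long block $\g$ of consecutive, tightly nested half-spaces is then carried by $g$ off itself, and iterating shows that the characteristic subcomplex of $g^n$ contains on the order of $n$ pairwise non-overlapping $G$-translates of $\g$. The decisive point --- and this is where the RAAG-like hypothesis is used --- is to choose $\g$ long enough and generic enough that \emph{no} $G$-translate of the reversed segment $\bg$ occurs in any of the characteristic subcomplexes that enter the homogenization. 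RAAG-like actions are non-transverse and carry enough rigidity --- in effect, no ``inversions'' that reverse long chains of nested half-spaces along the axis --- to guarantee this. Granting it, $\varphi_\g(g^n) \ge n - O(1)$ while the $\bg$-count vanishes identically, so $\widehat\varphi_\g(g) = \lim_{n\to\infty} \varphi_\g(g^n)/n \ge 1$, and Bavard Duality finishes the proof.

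The main obstacle is exactly this asymmetry: exhibiting a tightly nested segment that appears along the axis of $g$ but whose reverse never appears --- in any $G$-translate, in any relevant characteristic subcomplex. One must rule out accidental reversed copies arising from the geometry of $X$ away from the axis and from the overlap of distinct $G$-translates; this is precisely the phenomenon that genuinely fails when $g$ is conjugate to $g^{-1}$, so the RAAG-like axioms must be doing real work here. The new tools --- the essential characteristic set and the equivariant Euclidean embedding of its half-space poset --- are built in order to manufacture a segment that is long enough to be irreversible, and verifying this is the crux of the argument. By contrast, the defect bound for $\varphi_\g$ and the final arithmetic $1/(2\cdot 12) = 1/24$ are routine.
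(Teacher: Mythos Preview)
Your outline matches the paper's approach in every structural respect: Bavard duality, the counting quasimorphisms $\varphi_\g$ with defect $\le 6$ via the median property, the essential characteristic set and its equivariant Euclidean embedding as the arena for constructing $\g$, and the identification of ``no $G$-translate of $\bg$ in $\axis_g^+$'' as the crux where the RAAG-like axioms are spent.

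One point needs sharpening, and it is not cosmetic. You describe $\g$ as a ``sufficiently long block'' that is ``carried by $g$ off itself'', and separately as ``long enough \dots\ that no $G$-translate of $\bg$ occurs''. These two aims are in tension: longer subsegments of the taut chain $[A,gA)\cap\half^1$ are \emph{more} prone to overlap their own $g$-translate (the pentagon example in the paper shows the full taut segment failing exactly this), while shorter ones are easier to reverse. The paper's resolution is to take $\g$ to be a \emph{maximal $g$-nested} subsegment --- the longest $\g$ with $\g > g\g$ --- and this maximality is used essentially, not incidentally. It guarantees (\lemref{Nested}) transversalities $H^1_{\ell-1}\trans g^{-1}H^1_r$ and $gH^1_\ell\trans H^1_{r+1}$ at the ends of $\g$, which furnish concrete vertices of $\ess_g$ that any hypothetical copy of $\bg$ in $\axis_g^+$ must avoid; these are the witnesses that drive the Quadrant/Elbow arguments in Sections~\ref{Sec:TightlyNested}--\ref{Sec:Last} to a contradiction. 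A merely ``long'' or ``generic'' segment would not supply them, and the irreversibility argument would have nothing to grip.
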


  Since the standard action of a right-angled Artin group on its associated
  CAT(0) cube complex is RAAG-like, with all non-trivial elements acting
  hyperbolically, the following corollary is immediate.  
  
  \begin{introcor} \label{Cor:Main}

    Let $G$ be a right-angled Artin group. Then $\scl(g) \geq 1/24$ for
    every nontrivial $g \in G$. 

  \end{introcor}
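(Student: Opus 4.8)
The plan is to deduce this immediately from \thmref{Main}: the only content is to confirm that the standard action of $G$ on its associated CAT(0) cube complex is RAAG-like and that every nontrivial element of $G$ acts hyperbolically on that complex. Recall the construction: write $G = A_\Gamma$ for a finite simplicial graph $\Gamma$, let $S_\Gamma$ be the Salvetti complex (one vertex, one edge per generator, and one $k$--torus for each $k$--clique of $\Gamma$), and let $X = \widetilde{S_\Gamma}$ be its universal cover, a CAT(0) cube complex on which $G$ acts freely, properly, and cocompactly by cubical automorphisms.

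First I would verify each clause of \defref{raaglike} for this action. Since the notion of a RAAG-like action was introduced precisely to axiomatize the features of this example, I expect this to be a matter of unwinding definitions rather than a genuine difficulty: the hyperplanes of $X$ and their stabilizers, the orbits of half-spaces, and the non-transversality hypothesis are all read off from the clique structure of $\Gamma$ together with the fact that two distinct standard generators either commute or generate a free subgroup of rank two.

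Second I would argue that every nontrivial $g \in G$ is hyperbolic for this action. Because the action is proper and cocompact on a CAT(0) space, every element is a semisimple isometry, hence either elliptic or hyperbolic; and because the action is free, no nontrivial element fixes a point, so every nontrivial element is hyperbolic. (Combinatorially, a nontrivial $g$ fixes no vertex of $X$, so it cannot be elliptic and must act hyperbolically with a combinatorial axis.) With both points established, \thmref{Main} applies and yields $\scl(g) \geq 1/24$ for every nontrivial $g \in G$. The one place that warrants a moment's care is matching the paper's precise formulation of ``hyperbolic'' --- whether phrased via a combinatorial axis, a flipped half-space, or positive translation length --- with the freeness of the Salvetti action; but these coincide for actions on CAT(0) cube complexes, so no real obstacle arises, consistent with the excerpt's assertion that the corollary is immediate.
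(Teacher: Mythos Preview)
Your approach matches the paper's exactly: verify that the standard action of $A_\Gamma$ on $X_\Gamma$ is RAAG-like (done in \secref{RAAG-like}), observe that every nontrivial element acts hyperbolically, and invoke \thmref{Main}. The paper likewise treats the corollary as immediate.

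One small correction: you write $\Gamma$ as a \emph{finite} simplicial graph and then appeal to properness and cocompactness to get semisimplicity. The paper explicitly allows $\Gamma$ to be arbitrary (see the remark following \corref{Main} in the introduction), and for infinite $\Gamma$ the action on $X_\Gamma$ is neither cocompact nor proper in the metric sense. Your parenthetical combinatorial argument is the right one and works in full generality: the action is free and without inversion, so by Haglund's dichotomy every element is elliptic or hyperbolic, and freeness rules out elliptic for nontrivial $g$. Just drop the word ``finite'' and rely on that argument alone.
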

  
  What is perhaps surprising about this result is that there is a
  \emph{uniform} gap for scl, independent of the dimension of $X$. Note
  that in \thmref{Main} we do not assume that $X$ is either
  finite-dimensional or locally finite; thus \corref{Main} applies to
  right-angled Artin groups defined over arbitrary simplicial graphs.

  The defining properties of RAAG-like actions arose naturally while
  working out the arguments in this paper. It turns out, however, that
  RAAG-like actions are closely related to the \emph{special cube
  complexes} of Haglund and Wise \cite{HaglundWise}. That is, if $G$ acts
  freely on $X$, then the action is RAAG-like if and only if the quotient
  complex $X/G$ is special. See \secref{RAAG-like} and \remref{Special} for
  the precise correspondence between these notions. 

  \begin{introcor} \label{Cor:Main2}

    Let $G$ be the fundamental group of a special cube complex. Then
    $\scl(g) \geq 1/24$ for every non-trivial $g \in G$. 

  \end{introcor}

  This follows from \thmref{Main} since the action of $G$ on the
  universal cover is RAAG-like, with every non-trivial element acting
  hyerbolically. Alternatively, it follows from \corref{Main} and
  monotonicity, since every such group embeds into a right-angled
  Artin group.

  \subsection*{Related results}

  There are other gap theorems for stable commutator length in the
  literature, though in some cases the emphasis is on the existence of a
  gap, rather than its size. The first such result was Duncan and Howie's
  theorem \cite{DuncanHowie} that every non-trivial element of a free group
  has stable commutator length at least $1/2$. In \cite{CFL} it was shown
  that in Baumslag--Solitar groups, stable commutator length is either zero
  or at least $1/12$. A different result in \cite{CFL} states that if $G$
  acts on a tree, then $\scl(g) \geq 1/12$ for every ``well-aligned''
  element $g\in G$. There are also gap theorems for stable commutator
  length in hyperbolic groups \cite{Gromov,CalegariFujiwara} and in mapping
  class groups (and their finite-index subgroups) \cite{BBF2}, where
  existence of a gap is established. In these cases it is also determined
  which elements of the group have positive $\scl$. In
  \cite{CalegariFujiwara}, the size of the gap in the case of a hyperbolic
  group is estimated, in terms of the number of generators and the
  hyperbolicity constant. 

  In \cite[Corollary 6.13]{Koberda}, it was shown that every finitely
  generated right-angled Artin group $G$ embeds into the Torelli subgroup
  of the mapping class group of a surface.  Since scl is positive on the
  Torelli group \cite{BBF2}, monotonicity implies that every non-trivial
  element of $G$ has positive scl. However, the lower bounds obtained in
  this way are neither explicit nor uniform. For instance, the genus of the
  surface needed in \cite{Koberda} grows with the number of generators of
  $G$, and this affects the bounds arising in \cite{BBF2}  (which go to
  zero as the genus grows).  

  There are numerous results on the existence of homogeneous quasimorphisms
  on groups, where the purpose is to show that the group has non-zero
  second bounded cohomology. Let $\widetilde{\QH}(G)$ denote the space of
  homogeneous quasimorphisms on $G$, modulo homomorphisms. Then
  $\widetilde{\QH}(G)$ is a subspace of $H^2_b(G;\R)$. 
  In \cite{EpsteinFujiwara} it was shown that $\widetilde{\QH}(G)$ is
  infinite-dimensional for any hyperbolic group $G$. Recent results in this
  direction, involving both wider classes of groups and more general
  coefficient modules, include \cite{HullOsin} and \cite{BBF1}. 
  If $G$ is a non-abelian right-angled Artin group then 
  $\widetilde{\QH}(G)$ is infinite-dimensional (via a retraction onto a free
  subgroup). The quasimorphisms defined in this paper 
  include an infinite collection that is linearly independent in
  $\widetilde{\QH}(G)$; see \propref{linear-indep}. These appear to be
  different from the quasimorphisms one obtains via retractions. 

  We have mentioned that the \emph{median property} of CAT(0) cube
  complexes is used to control the defect of our quasimorphisms. The use of
  medians in this context originated in \cite{CFI}, where they are used to
  define a bounded cohomology class (the \emph{median class}) which has
  good functorial properties. This class is defined, and is
  \emph{non-trivial}, whenever one has a non-elementary group action on a
  finite-dimensional CAT(0) cube complex. One consequence, among many
  others, is that $H^2_b(G;M)$ is non-trivial for any such group, for a
  suitably defined coefficient module $M$. 

  Our upper bound of $12$ for the defect of the quasimorphisms
  $\widehat{\phi}_\g$ can actually be lowered to $6$ in the special case
  when the CAT(0) cube complex is $1$--dimensional; see \remref{CFL6.6}.
  This statement then coincides with Theorem 6.6 of \cite{CFL}, and thus
  we obtain a new proof of the latter result. 

  While this paper was being considered for publication, Heuer
  \cite{Heuer} announced a sharp lower bound of 1/2 for scl in RAAGs,
  using a very different family of quasimorphisms. 

  \subsection*{Methods}
  
  The fundamental result upon which most of our arguments depend is the
  existence of \emph{equivariant Euclidean embeddings}, proved in
  \propref{TautEmbedding}. To state this result, we first note that every
  element $g \in G$ has a \emph{minimal subcomplex} $M_g \subseteq X$, and if
  $g$ is hyperbolic then this subcomplex admits a $\gen{g}$--invariant
  product decomposition $M_g \cong \mess_g \times \fix_g$. The action of $g$ on
  $\fix_g$ is trivial and every edge in $\mess_g$ is on a combinatorial
  axis for $g$. We call $\mess_g$ the \emph{essential minimal set} for $g$.
  Furthermore, we show that $\mess_g$ is always a finite-dimensional CAT(0)
  cube complex. However, $\mess_g$ is not always a convex subcomplex of
  $X$. We denote by $\ess_g$ its convex hull in $X$ and refer to $\ess_g$
  as the \emph{essential characteristic set} for $g$. The subcomplex
  $\ess_g$ is in general much more complicated than $\mess_g$ and can have
  infinite dimension. In \secref{CharSets}, we give a complete
  characterization of when $\ess_g$ is finite-dimensional and when $\ess_g$
  and $\mess_g$ are the same. 

  \propref{TautEmbedding} states that under suitable assumptions 
  there is a $\gen{g}$--equivariant embedding of
  $\ess_g$ into $\R^d$, where $d = \dim \ess_g$. That is, there is an
  embedding of cube complexes $\ess_g \into \R^d$ such that the action of
  $\gen{g}$ on $\ess_g$ extends to an action on $\R^d$ (preserving its
  standard cubing). Furthermore, the embedding induces a bijection between
  the half-spaces of $\ess_g$ and those of $\R^d$.

  It is well known that any interval in a CAT(0) cube complex
  admits an embedding into $\R^d$ for some $d$. This result is proved using
  Dilworth's theorem on partially ordered sets of finite width; see
  \cite{BCGNW} for details. What is new in our result is the
  equivariance. In order to prove it, we first state and prove an
  equivariant version of Dilworth's theorem, \lemref{EquiDilworth}. 

  An important aspect of the equivariant Euclidean embedding is that it
  provides a geometric framework for understanding the fine structure of
  the set of half-spaces of $\ess_g$, considered as a partially ordered
  set. This set becomes identified with the set of half-spaces of $\R^d$,
  and the partial ordering from $\ess_g$ is determined by the knowledge of
  which cubes in $\R^d$ are occupied by $\ess_g$ (cf. \remref{Square}).
  Tools such as the Quadrant Lemma and the Elbow Lemma (see
  \secref{Quadrants}) can be used to retrieve information about the partial
  ordering. These tools become available once $\ess_g$ has been embedded
  into $\R^d$.  

  \subsection*{An outline of the paper}

  In \secref{Prelim} we present background on several topics, including
  quasimorphisms and stable commutator length, CAT(0) cube complexes,
  and right-angled Artin groups. 

  In \secref{CharSets} we define the \emph{essential minimal set} and
  the \emph{essential characteristic set}, and establish their
  properties. We determine when they agree, and when the latter has
  finite dimension. 

  In \secref{Non-transverse} we define \emph{non-transverse}
  actions. For such actions we also define the quasimorphisms $\psi_\g$ and
  $\varphi_\g$ and establish the bounds on defect, using medians. We
  construct an infinite linearly independent set in
  $\widetilde{\QH}(A_{\Gamma})$, for any non-abelian right-angled
  Artin group $A_{\Gamma}$. 

  In \secref{Dilworth} we prove the equivariant Dilworth theorem, and apply
  it to prove the existence of equivariant Euclidean embeddings of
  essential characteristic sets. 

  In \secref{Quadrants} we introduce \emph{quadrants} and prove two basic
  results, the Quadrant Lemma and the Elbow Lemma. These are the primary
  tools used for studying the essential characteristic set $\ess_g$ once it
  has been equivariantly embedded into $\R^d$. 

  In \secref{RAAG-like} we discuss \emph{RAAG-like} actions on CAT(0) cube
  complexes. 

  In Sections \ref{Sec:TightlyNested} and \ref{Sec:Last} we carry out the
  rather intricate arguments needed to show that $\widehat{\varphi}_\g(g)
  \geq 1$ for the appropriate choice of $\g$. Essentially all of the
  effort in these sections is devoted to showing that $\ess_g$ contains
  no $G$--translate of $\bg$.

  \subsection*{Acknowledgments}

  Fern\'os was partially supported by NSF award DMS-1312928, Forester by
  NSF award DMS-1105765, and Tao by NSF awards DMS-1311834, DMS-1611758,
  DMS-1651963. The authors thank the referee for many helpful comments
  and for pointing out an error in the original proof of \propref{CAT0}.  

\section{Preliminaries} 

  \label{Sec:Prelim} 

  In this section we establish notation and background for the rest of the
  paper. We start with the topics of quasimorphisms and stable commutator
  length. For more detail see \cite{scl}. Then we give some background on
  CAT(0) cube complexes, focusing on the structure of their half spaces and
  their median structure. More information on these topics can be found in
  \cite{Sageev,Roller,Haglund,ChatterjiNiblo,Nica}. The section
  concludes with a brief overview of right-angled Artin groups and
  properties of their associated CAT(0) cube complexes. These
  properties lead to the notion of \emph{RAAG-like} actions, to be
  defined in \secref{RAAG-like}. 

  \begin{notation}

    Throughout the paper we use the symbols ``$\subset$'' and
    ``$\supset$'' to denote \emph{strict} inclusion only. 

  \end{notation}

  \subsection*{Quasimorphisms and stable commutator length}
  
  Let $G$ be any group. A  map $\varphi \from G \to \R$ is a
  \emph{quasimorphism} on $G$ if there is a constant $D \ge 0$ such that
  for all $g,h \in G$, 
  \[ 
    \abs{\varphi(gh) - \varphi(g) - \varphi(h)} \le D.
  \] 
  The smallest $D$ that satisfies the inequality above is called the
  \emph{defect} of $\varphi$. It is immediate that a quasimorphism is a
  homomorphism if and only if its defect is $0$. 

  A quasimorphism $\varphi$ is \emph{homogeneous} if $\varphi(g^n) = n
  \varphi(g)$ for all $g \in G$ and $n \in \Z$. Given any quasimorphism
  $\varphi$, its \emph{homogenization} $\widehat{\varphi}$ is defined by
  \[ 
    \widehat{\varphi}(g) = \lim_{n \to \infty} \frac{\varphi(g^n)}{n}. 
  \] 
  It is straightforward to check $\widehat\varphi$ is a homogeneous
  quasimorphism. Its defect can be estimated as follows: 

  \begin{lemma} \label{Lem:Homogenization}

    If $\varphi$ is a quasimorphism of defect at most $D$, then its
    homogenization has defect at most $2D$. 
    
  \end{lemma}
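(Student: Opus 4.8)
The plan is to establish, in order: that the limit defining $\widehat\varphi$ exists with $\abs{\widehat\varphi(g)-\varphi(g)}\le D$; that $\widehat\varphi$ is homogeneous and conjugation-invariant; and then the defect bound, by passing to high powers.

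First, a one-line induction on $n$ applied to $g^n=g^{n-1}\cdot g$ gives $\abs{\varphi(g^n)-n\varphi(g)}\le(n-1)D$ for all $n\ge1$. Since $n\mapsto\varphi(g^n)+D$ is subadditive, the sequence $\varphi(g^n)/n$ converges, so $\widehat\varphi$ is well defined, and passing to the limit in the estimate gives $\abs{\widehat\varphi(g)-\varphi(g)}\le D$; combined with $\widehat\varphi(g^n)=n\widehat\varphi(g)$ this yields the sharper $\abs{\varphi(g^n)-n\widehat\varphi(g)}\le D$. Homogeneity for positive powers is immediate from the defining limit; for negative powers it follows once one notes $\widehat\varphi(g^{-1})=-\widehat\varphi(g)$, since the defect inequality applied to $g^ng^{-n}=1$ together with $\abs{\varphi(1)}\le D$ shows $\abs{\varphi(g^n)+\varphi(g^{-n})}$ is bounded uniformly in $n$. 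Likewise $\abs{\varphi(ag^na^{-1})-\varphi(g^n)}$ is bounded uniformly in $n$ (two applications of the defect inequality, plus $\abs{\varphi(a)+\varphi(a^{-1})}\le 2D$), so $\widehat\varphi$ is conjugation-invariant.

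Now fix $g,h$. By homogeneity $n\bigl(\widehat\varphi(gh)-\widehat\varphi(g)-\widehat\varphi(h)\bigr)=\widehat\varphi((gh)^n)-\widehat\varphi(g^n)-\widehat\varphi(h^n)$, and replacing each $\widehat\varphi$ by $\varphi$ costs at most $3D$, so it suffices to prove
\[ \abs{\varphi((gh)^n)-\varphi(g^n)-\varphi(h^n)}\le(2n+1)D; \]
then $\abs{\widehat\varphi(gh)-\widehat\varphi(g)-\widehat\varphi(h)}\le(2+4/n)D$ for all $n$, hence $\le 2D$. To prove the displayed inequality I would slide all the $g$'s to the front: $(gh)^n=g^n\,c_{n-1}c_{n-2}\cdots c_1c_0$ with $c_j:=g^{-j}hg^j$ (an easy induction), then telescope, writing $\varphi$ of the product of the factors $g,\dots,g,c_{n-1},\dots,c_0$ as the sum of their $\varphi$-values minus the $2n-1$ ``coboundary'' terms $\varphi(a)+\varphi(b)-\varphi(ab)$, each of absolute value $\le D$. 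The key point: the $n-1$ coboundary terms produced by the opening block of $g$'s are \emph{exactly} those that appear when $\varphi(g^n)$ is telescoped the same way, so they cancel in $\varphi((gh)^n)-\varphi(g^n)$; what remains is $\sum_{j=0}^{n-1}\varphi(c_j)-\varphi(h^n)$ together with $n$ coboundary terms (the join of $g^n$ to $c_{n-1}$ and the $n-1$ joins among the $c_j$'s), which contribute at most $nD$. Finally, since each $c_j$ is conjugate to $h$ we have $\varphi(c_j)=\widehat\varphi(h)\pm D$ by conjugation-invariance and the estimate above, while $\varphi(h^n)=n\widehat\varphi(h)\pm D$; summing, $\bigl|\sum_{j=0}^{n-1}\varphi(c_j)-\varphi(h^n)\bigr|\le(n+1)D$. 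Adding, $\abs{\varphi((gh)^n)-\varphi(g^n)-\varphi(h^n)}\le(n+1)D+nD=(2n+1)D$.

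The only real subtlety — and the main obstacle — is the choice of expansion in the last step. The naive alternating expansion $g\,h\,g\,h\cdots$ produces $2n-1$ coboundary terms whose partial products $g,gh,ghg,\dots$ bear no relation to the partial products $g,g^2,g^3,\dots$ occurring in the telescoping of $\varphi(g^n)$, so nothing cancels and one is left only with the bound $(4n-3)D$, i.e.\ defect $\le 4D$. Collecting the $g$'s to the front is precisely what makes the $g$-side coboundary terms match and cancel, roughly halving the surviving error; the remaining half is then controlled using only conjugation-invariance of $\widehat\varphi$ and the estimate $\abs{\varphi(x^n)-n\widehat\varphi(x)}\le D$.
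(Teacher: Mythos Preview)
The paper does not actually include a proof of this lemma; it is stated as a standard fact (see, e.g., Calegari's \emph{scl}, Lemma~2.58) and then used without further comment. So there is no ``paper's own proof'' to compare against.

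Your argument is correct. The existence of the limit, the bound $\abs{\widehat\varphi-\varphi}\le D$, homogeneity, and conjugation-invariance are all handled properly. The identity $(gh)^n=g^n c_{n-1}\cdots c_0$ with $c_j=g^{-j}hg^j$ is right (easy induction via $c_jg=gc_{j+1}$), and the cancellation you identify is genuine: telescoping from the left, the first $n-1$ coboundary terms in $\varphi((gh)^n)$ are exactly $\delta(g^i,g)$ for $1\le i\le n-1$, which match those in $\varphi(g^n)$. The remaining $n$ coboundary terms contribute at most $nD$, and your estimate $\bigl|\sum_j\varphi(c_j)-\varphi(h^n)\bigr|\le(n+1)D$ via $\widehat\varphi(c_j)=\widehat\varphi(h)$ and $\abs{\varphi(x)-\widehat\varphi(x)}\le D$ is clean. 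The final passage to the limit gives the defect bound $2D$ as claimed.

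Your closing remark is also on point: the naive alternating expansion only yields $4D$, and some reorganization is genuinely needed to reach $2D$. For comparison, the argument in Calegari's book uses a slightly different (but equivalent in spirit) rearrangement; your ``collect the $g$'s to the front'' trick is a perfectly good alternative and makes the cancellation mechanism very explicit.
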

  
  Two maps $\varphi, \psi \from G \to \R$ are uniformly close if there 
  exists $D \ge 0$ such that $\abs{\varphi(g) - \psi(g)} \le D$ for all $g
  \in G$. It is easy to check that any map uniformly close to a
  quasimorphism is a quasimorphism. Further, the following statement
  holds: 

  \begin{lemma} \label{Lem:Bounded} 

    If $\varphi$ is uniformly close to a quasimorphism $\psi$, then
    $\widehat{\varphi} = \widehat{\psi}$. 
    
  \end{lemma}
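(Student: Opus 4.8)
The plan is to unwind the definition of homogenization and estimate directly. First I would record that $\varphi$, being uniformly close to the quasimorphism $\psi$, is itself a quasimorphism (as noted just above the statement), so that its homogenization $\widehat{\varphi}$ is defined; likewise $\widehat{\psi}$ is defined. Fix the constant $D \ge 0$ with $\abs{\varphi(g) - \psi(g)} \le D$ for all $g \in G$, and fix an arbitrary $g \in G$. The key observation is that applying the bound at the element $g^n$ gives
\[
  \abs*{\frac{\varphi(g^n)}{n} - \frac{\psi(g^n)}{n}} \;=\; \frac{\abs{\varphi(g^n) - \psi(g^n)}}{n} \;\le\; \frac{D}{n}
\]
for every $n \ge 1$.

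Then I would let $n \to \infty$: the right-hand side tends to $0$, so the two sequences $\varphi(g^n)/n$ and $\psi(g^n)/n$ have the same limit. By definition of homogenization this says $\widehat{\varphi}(g) = \widehat{\psi}(g)$, and since $g$ was arbitrary we conclude $\widehat{\varphi} = \widehat{\psi}$. There is essentially no obstacle here; the only point requiring a moment's care is the preliminary remark that $\varphi$ is genuinely a quasimorphism, so that the limit defining $\widehat{\varphi}$ exists in the first place — but this has already been observed in the paragraph preceding the statement.
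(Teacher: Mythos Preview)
Your proof is correct and follows essentially the same approach as the paper's: both fix the uniform bound $D$, apply it at $g^n$, divide by $n$, and let $n \to \infty$ to conclude that $\widehat{\varphi}(g) = \widehat{\psi}(g)$. Your additional remark that $\varphi$ is itself a quasimorphism (so that $\widehat{\varphi}$ exists) is a welcome clarification that the paper leaves implicit.
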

  
  \begin{proof}
    
    By assumption, there exists $D \ge 0$ such that
    $\abs{\varphi(g)-\psi(g)} \le D$ for all $g \in G$. Then 
    \[ 
      \abs{\widehat{\varphi}(g) - \widehat{\psi}(g)} 
      = \left\vert \lim_{n \to \infty} \frac{\varphi(g^n)}{n} -
        \lim_{n \to \infty} \frac{\psi(g^n)}{n} \right\rvert  
      = \lim_{n \to \infty} \frac{\abs{\varphi(g^n) -
      \psi(g^n)}}{n} 
      \le \lim_{n \to \infty} \frac{D}{n} = 0. \qedhere
    \] 
  \end{proof}
  
  Now denote by $[G,G]$ the commutator subgroup of $G$. Given an element $g
  \in [G,G]$, the \emph{commutator length} $\text{cl}(g)$ of $g$ is the
  minimal number of commutators whose product equals $g$. The commutator
  length of the identity element is $0$. For any $g \in [G,G]$, the
  \emph{stable commutator length} of $g$ is 
  \[ 
    \scl(g) = \lim_{n \to \infty} \frac{\text{cl}(g^n)}{n}.
  \] 
  Note that $\scl(g^n) = n \scl(g)$ for all $n\in \Z$ and $g\in G$. This
  formula allows one to define $\scl$ for elements that are only
  virtually in $[G,G]$. By convention, $\scl(g) = \infty$ if no power of 
  $g$ lies in $[G,G]$. 
  
  The relationship between stable commutator length and quasimorphisms on
  $G$ is expressed by Bavard duality \cite{Bavard}. We state the
  easier direction below: 
  
  \begin{lemma}[Easy direction of Bavard Duality] \label{Lem:SCL}

    For any $g \in [G,G]$, if $\varphi$ is a homogeneous quasimorphism
    on $G$ with defect at most $D$, then 
    \[ 
      \scl(g) \ge \frac{\varphi(g)}{2D}.
    \] 
    
  \end{lemma}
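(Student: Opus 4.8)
The plan is the standard one for the easy half of Bavard Duality: extract the inequality from a few elementary properties of homogeneous quasimorphisms — their behaviour on inverses, their conjugation-invariance, and the quasi-additivity over finite products — and then run the computation on the powers $g^n$. We may as well assume $\varphi(g) \ge 0$, since otherwise the asserted inequality is trivial because $\scl(g) \ge 0$; and we may assume $D > 0$, since a homogeneous quasimorphism of defect $0$ is a homomorphism and hence vanishes on $[G,G]$.

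First I would record the auxiliary facts. From $\varphi(g^n) = n\varphi(g)$ we get $\varphi(g^0) = 0$ and $\varphi(g^{-1}) = -\varphi(g)$ for all $g$. Next, $\varphi$ is conjugation-invariant: for $h,g \in G$ and $n \ge 1$ one has $(hgh^{-1})^n = hg^nh^{-1}$, and two applications of the defining inequality give
\[
  \abs{\varphi(hg^nh^{-1}) - \varphi(g^n) - \varphi(h) - \varphi(h^{-1})} \le 2D ;
\]
since $\varphi(h) + \varphi(h^{-1}) = 0$ and $\varphi(hg^nh^{-1}) = n\varphi(hgh^{-1})$, $\varphi(g^n) = n\varphi(g)$, dividing by $n$ and letting $n \to \infty$ forces $\varphi(hgh^{-1}) = \varphi(g)$. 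From this, $\varphi$ is controlled on a single commutator: writing $[a,b] = (aba^{-1})\,b^{-1}$, the defining inequality gives $\abs{\varphi([a,b]) - \varphi(aba^{-1}) - \varphi(b^{-1})} \le D$, and since $\varphi(aba^{-1}) = \varphi(b) = -\varphi(b^{-1})$ this reads $\abs{\varphi([a,b])} \le D$. Finally, iterating the defining inequality yields $\abs{\varphi(x_1 \cdots x_m) - \sum_{i=1}^m \varphi(x_i)} \le (m-1)D$ for any $x_1,\dots,x_m \in G$.

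Now for the main estimate. Fix $n \ge 1$ and write $g^n = \prod_{i=1}^{k}[a_i,b_i]$ with $k = \text{cl}(g^n)$. Rewriting each factor as $[a_i,b_i] = (a_ib_ia_i^{-1})\,b_i^{-1}$ exhibits $g^n$ as a product of $2k$ group elements, so the iterated inequality gives
\[
  \abs{\varphi(g^n) - \sum_{i=1}^{k}\bigl(\varphi(a_ib_ia_i^{-1}) + \varphi(b_i^{-1})\bigr)} \le (2k-1)D ,
\]
and every summand vanishes by conjugation-invariance together with $\varphi(b_i^{-1}) = -\varphi(b_i)$. Hence $n\varphi(g) = \varphi(g^n) \le \abs{\varphi(g^n)} \le (2\,\text{cl}(g^n) - 1)D$, so $\text{cl}(g^n)/n \ge \varphi(g)/(2D) + 1/(2n)$. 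Letting $n \to \infty$ and using $\scl(g) = \lim_n \text{cl}(g^n)/n$ gives $\scl(g) \ge \varphi(g)/(2D)$.

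There is no real obstacle here; the one step that needs a little care is the proof of conjugation-invariance via the telescoping-over-powers trick, together with the harmless reductions $\varphi(g) \ge 0$ and $D > 0$. As an alternative, one could obtain the bound $\abs{\varphi(g^n)} \le (2\,\text{cl}(g^n) - 1)D$ by induction on the number of commutators, with the commutator estimate above as the base case, but flattening the product of commutators as in the displayed step avoids the induction entirely.
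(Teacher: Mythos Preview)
Your proof is correct and is the standard argument for the easy half of Bavard Duality. The paper itself does not supply a proof of this lemma; it is stated as a known result (with the reference to Bavard's paper \cite{Bavard} given earlier in the introduction), so there is nothing to compare against. Your handling of the edge cases $\varphi(g) < 0$ and $D = 0$ is appropriate, and the key steps---conjugation-invariance of homogeneous quasimorphisms via the power trick, the bound $\abs{\varphi([a,b])} \le D$, and the iterated defect inequality over a product of $2k$ factors---are all carried out cleanly.
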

  
  \subsection*{CAT(0) cube complexes}
  
  A cube of dimension $d$ is an isometric copy of $[0,1]^d$ with the
  standard Euclidean metric. A face of a cube is obtained by fixing any
  number of coordinates to be $0$ or $1$. This is naturally a cube of the
  appropriate dimension. A \emph{midcube} is the subset of the cube
  obtained by fixing one of the coordinates to be $1/2$. 
  
  A \emph{cube complex} $X$ is a space obtained from a collection of cubes
  with some faces identified via isometries. The dimension of $X$ is the
  dimension of a maximal dimensional cube if it exists; otherwise the
  dimension of $X$ is infinite. We equip $X$ with the path metric induced
  by the Euclidean metric on each cube. By Gromov's link condition, $X$ is
  non-positively curved if and only if the link of every vertex of $X$ is a
  flag complex. A cube complex $X$ is CAT(0) if and only if it is
  non-positively curved and simply connected.

  Let $X$ be a CAT(0) cube complex. By an \emph{edge path} of length $n$ we
  will mean a sequence of vertices $x_0,\ldots,x_n$, such that adjacent
  vertices $x_i$ and $x_{i+1}$ are joined by an edge of $X$. If
  $p=x_0,\ldots,x_n$ and $q=y_0,\ldots,y_m$ are two edge paths with $x_n =
  y_0$, then their concatenation is the edge path $p \cdot q =
  x_0,\cdots,x_n, y_1, \cdots y_m$. 

  We will ignore the CAT(0) metric on $X$ and consider the
  \emph{combinatorial metric} on its vertex set, which measures
  distance $d(x,y)$ between two vertices $x$ and $y$ as the minimal
  length of an edge path joining them. An edge path from $x$ to $y$ is a
  \emph{geodesic} if it has length $d(x,y)$. An infinite sequence of
  vertices in $X$ is a geodesic if every finite consecutive subsequence
  is a geodesic. 
  
  A \emph{hyperplane} in $X$ is a connected subset whose intersection with
  each cube of $X$ is either empty or is a midcube. This set always divides
  $X$ into two disjoint components. The closure of a component is called a
  \emph{half-space} $H$ of $X$. The closure of the other component is
  denoted by $\bH$. We denote by $\partial H$ the boundary hyperplane of
  $H$ and note that $\partial H = \partial \bH$. 
  
  A subcomplex $C \subseteq X$ is \emph{convex} if every geodesic in $X$
  between two of its vertices is contained entirely in $C$. If $Y \subseteq
  X$ is any subcomplex, the \emph{convex hull} $C(Y)$ of $Y$ is the
  smallest convex subcomplex containing $Y$. Equivalently, it is the
  largest subcomplex of $X$ that is contained in the intersection of all
  half-spaces containing $Y$. 

  For any vertices $x,y \in X$, we will denote by $C(x,y)$ the convex hull
  $C \left( \set{x,y} \right)$. 

  A hyperplane $\partial H$ is \emph{dual} to an edge (or vice versa) if
  $\partial H$ intersects the edge. A half-space $H$ is dual to an edge if 
  $\partial H$ is. A cube $C$ is dual to a hyperplane $\partial H$ if $C$
  contains an edge dual to $\partial H$. The \emph{neighborhood} of
  $\partial H$ is the union $N(\partial H)$ of all cubes dual to $\partial
  H$. By \cite[Theorem 2.12]{Haglund}, $N(\partial H)$ is convex. 
  Further, there is
  a an involution on $N(\partial H)$ that fixes $\partial H$ pointwise
  and swaps the endpoints of each edge dual to $\partial H$ (in fact,
  $N(\partial H) \cong \partial H \times [0,1]$). 

  Let $\half(X)$ be the collection of half-spaces of $X$. This is 
  partially ordered by inclusion. We say two half-spaces are
  \emph{nested} if they are linearly ordered; they are \emph{tightly
  nested} if they are nested and there is no third half-space that lies
  properly between them. The map $\half(X) \to \half(X)$ sending $H$
  to $\bH$ is an order-reversing involution. 

  Two half-spaces $H, H'$ of $X$ are \emph{transverse}, denoted by $H
  \trans H'$, if all four intersections 
  \[ 
    H \cap H', \qquad H \cap \bH', \qquad \bH \cap H, \qquad \bH \cap
    \bH',
  \] 
  are non-empty. When this happens, then there is a cube $C$ in $X$
  such that $\partial H \cap C$ and $\partial H' \cap C$ are different
  midcubes of $C$. More generally, if $H_1,\ldots,H_n$ are pairwise
  transverse, then there is a cube $C$ in $X$ of dimension $n$ such
  that $\partial H_1 \cap C, \ldots,\partial H_n \cap C$ are the $n$
  midcubes of $C$. 
  
  Given two vertices $x,y \in X$, the \emph{interval between $x$ and $y$}
  is 
  \[ 
    [x,y] = \set{H \in \half \st y\in H, x\in \bH}.
  \] 
  Two distinct half-spaces $H, H' \in [x,y]$ are always either nested or
  transverse. The interval $[y,x]$ is exactly the set of half spaces
  $\set{ \bH \st H \in [x,y]}$. 

  An \emph{oriented edge} $e = (x,y)$ is an edge whose vertices $x, y$
  have been designated as \emph{initial} and \emph{terminal}
  respectively. Given an edge path $x_0, \dotsc, x_n$, each edge $(x_i,
  x_{i+1})$ receives an induced orientation with $x_i$ initial and
  $x_{i+1}$ terminal. For any oriented edge $e = (x,y)$, the \emph{half
  space dual to} $e$ is the unique half-space in the interval $[x,y]$; it
  is dual to $e$ considered as an unoriented edge, and it contains $y$
  but not $x$. 

  An edge path is a geodesic if and only if it crosses no hyperplane twice.
  Two geodesics from $x$ to $y$ determine the same set of half-spaces
  $[x,y]$, and every half-space $H \in [x,y]$ is dual to some edge on every
  geodesic from $x$ to $y$. Therefore, the combinatorial distance $d(x,y)$
  is the same as the cardinality of $[x,y]$. See \cite[Theorem
  4.13]{Sageev} for more details.
  
  \subsection*{Ultrafilters}

  Suppose $\sigma$ is a function assigning to each hyperplane $h$ in $X$ a
  half-space $H$ with $\partial H = h$. Then $\sigma$ is an
  \emph{ultrafilter} if $\sigma(h)$ and $\sigma(h')$ have non-trivial
  intersection for every pair of hyperplanes $h, h'$. An alternative
  viewpoint is to simply specify the image of $\sigma$, as a subset of
  $\half(X)$ that contains exactly one half-space from each pair $\set{H,
  \bH}$, such that no two elements are disjoint. For this reason, $\sigma$
  is sometimes called an ultrafilter ``on $\half(X)$". 

  For each vertex $v$ of $X$ there is a \emph{principal ultrafilter} of
  $v$, defined by choosing $\sigma(h)$ to be the half-space with boundary
  $h$ containing $v$. Neighboring vertices define principal ultrafilters
  that differ on a single hyperplane (the one that is dual to the edge
  separating the vertices). Conversely, if two principal ultrafilters
  differ on a single hyperplane, then the corresponding vertices bound an
  edge, dual to that hyperplane. Since $X$ is connected, any two principal
  ultrafilters will differ on finitely many hyperplanes. Indeed, the number
  of such hyperplanes is precisely the distance between the two vertices. 

  The principal ultrafilters admit an intrinsic characterization: an
  ultrafilter $\sigma$ on $\half(X)$ is principal if and only if
  it satisfies the descending chain condition: whenever $\{h_i\}$ is a
  sequence of hyperplanes such that $\sigma(h_i) \supseteq
  \sigma(h_{i+1})$ for all $i$, the sequence is eventually
  constant. It follows that if an ultrafilter differs
  from a principal one on finitely many hyperplanes, it will also be
  principal. 

  Knowledge of the principal ultrafilters on $\half(X)$ completely
  determines $X$ as a CAT(0) cube complex. The \emph{Sageev construction}
  is the name for the process of building a cube complex from its partially
  ordered set of half-spaces. The $1$--skeleton of $X$ is determined from
  principal ultrafilters as already described, and cubes are added whenever
  their $1$--skeleta are present \cite{Sageev}. 

  More generally, let $\half$ be any partially ordered set with an
  order-reversing free involution $H \mapsto \bH$, such that every interval
  is finite. The Sageev construction yields a CAT(0) cube complex
  $X(\half)$ whose half-spaces correspond to $\half$ as a partially ordered
  set with involution \cite{Roller}. It is often convenient to think of
  vertices of $X$ as principal ultrafilters, and to identify $X$ with the
  result of the Sageev construction performed on $\half(X)$. 

  \subsection*{Medians}

  Given three vertices $x, y, z\in X$, there is a unique vertex $m = m(x,
  y, z)$ called the \emph{median} such that $[a, b] = [a, m] \cup [m, b]$
  for all pairs $\set{a,b} \subset \set{x,y,z}$. For completeness we sketch
  the proof, since the standard reference \cite{Roller} is unpublished. 

  As an ultrafilter, $m$ is defined by simply assigning to each hyperplane
  the half-space which contains either two or three of the vertices
  $\set{x, y, z}$. Two such half-spaces cannot be disjoint, so this rule
  does indeed define an ultrafilter. This ultrafilter is principal (i.e. it
  defines a \emph{vertex}) because it differs from the principal
  ultrafilter of $x$ on finitely many hyperplanes: if $H$ is chosen by $m$
  and $x\not\in H$, then $y,z \in H$; hence $H \in [x, y] \cap [x,z]$, a
  finite set. Finally, given $a, b \in \set{x, y, z}$, every half-space
  containing $a$ and $b$ also contains $m$, by definition. Thus, no
  hyperplane can separate $m$ from $a$ and $b$, and therefore $[a, b] =
  [a,m] \cup [m,b]$. 
  
  A vertex $z$ lies on a geodesic edge path from $x$ to $y$ if and only if
  $z = m(x, z, y)$. Therefore, $z \in C(x,y)$ if and only $z = m(x,z,y)$. 
  
  \subsection*{Segments}

  By a \emph{segment} \g of length $n$ we will mean a chain of half-spaces
  $H_1 \supset H_2 \supset \cdots \supset H_n$ such that $H_i$ and
  $H_{i+1}$ are tightly nested for all $i=1,\ldots,n-1$. The
  \emph{inverse} of \g is the segment \bg: $\bH_n \supset \bH_{n-1}
  \supset \cdots \supset \bH_1$.  

    Let $\g$ and $\g'$ be segments. We write $\g > \g'$ if every
    half-space in $\g$ contains every half-space in $\g'$. We say that
    $\g$ and $\g'$ are \emph{nested} if either $\g > \g'$ or $\g' > \g$. 
  
  \begin{definition}
    
    Two segments $\g$ and $\g'$ are said to  \emph{overlap} if
    either $\g \cap \g' \neq \emptyset$ or there exist $H \in \g$ and
    $H' \in \g'$ with $H \trans H'$. Otherwise, they are
    \emph{non-overlapping}. 
  
  \end{definition}
  
  \begin{lemma} \label{Lem:Non-Over}
    
    Suppose $\g_1$ and $\g_2$ are non-overlapping segments that are
    contained in $[x,y]$. Then $\g_1$ and $\g_2$ are nested.
    
  \end{lemma}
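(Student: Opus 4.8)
The plan is to argue by contradiction: suppose $\g_1$ and $\g_2$ are non-overlapping but not nested. Since all of the half-spaces involved lie in $[x,y]$, any two distinct half-spaces $H \in \g_1$ and $H' \in \g_2$ are either nested or transverse; the non-overlapping hypothesis rules out transversality, so every cross-pair $(H,H')$ with $H\in\g_1$, $H'\in\g_2$ is nested, and moreover (since $\g_1\cap\g_2=\emptyset$ and the involution is free) $H\neq H'$ and $H\neq\overline{H'}$. So for each such pair exactly one of $H\supset H'$ or $H'\supset H$ holds. The failure of nestedness of the two \emph{collections} $\g_1$ and $\g_2$ means that neither $\g_1>\g_2$ nor $\g_2>\g_1$ holds, i.e. there exist pairs oriented both ways.

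First I would record monotonicity of these orientations along each chain. Write $\g_1\colon H_1\supset\cdots\supset H_m$ and $\g_2\colon K_1\supset\cdots\supset K_n$. If $H_i\supset K_j$ for some $i,j$, then $H_{i'}\supset H_i\supset K_j$ for all $i'\le i$, so $H_{i'}\supset K_j$; and $H_i\supset K_j\supset K_{j'}$ for all $j'\ge j$. Similarly if $K_j\supset H_i$ then $K_{j'}\supset H_i$ for $j'\le j$ and $K_j\supset H_{i'}$ for $i'\ge i$. Consequently there is a ``threshold'': let $i_0$ be the largest index with $H_{i_0}\supset K_1$ (interpret as $0$ if no such index) — actually cleaner is to consider the bottom elements. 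Let me take the smallest half-spaces $H_m$ and $K_n$. By the above, either $H_m\supset K_n$ or $K_n\supset H_m$; say WLOG $H_m\supset K_n$. I claim then $\g_1>\g_2$, contradicting non-nestedness. Indeed, pick any $H_i\in\g_1$ and $K_j\in\g_2$; if $K_j\supset H_i$ then $K_j\supset H_i\supseteq H_m\supset K_n$, and also $K_j\supseteq K_n$, so $K_j$ and $K_n$ are nested with $K_j\supset K_n$ — consistent — but then $K_j\supset H_m$ and $K_j\supset K_n$ forces... hmm, this needs the transversality-freeness of $[x,y]$ to push through. The key point: from $H_m\supset K_n$ and $K_j\supset H_i$ we get $K_j\supset K_n$ and $K_j\supset H_i$, which is fine on its own, so I instead argue at the $H$-side: from $H_m\supset K_n$, any $H_i\supseteq H_m\supset K_n$, so $H_i\supset K_n$ for all $i$; then for any $j$, $H_i$ and $K_j$ are nested and we must decide the direction. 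If $K_j\supset H_i$, then since $H_i\supset K_n$ we'd have $K_j\supset K_n$ (true) but we also have $K_j$ nested with $H_m$: from $K_j\supset H_i\supseteq H_m$, $K_j\supset H_m$; combined with $H_m\supset K_n\subseteq K_j$, no contradiction yet — so the genuinely clean route is:

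I would instead phrase the threshold argument directly. Define $S=\{\,i : H_i\supset K_j \text{ for some } j\,\}$ and $T=\{\,i : K_j\supset H_i\text{ for some }j\,\}$. By monotonicity above, $S$ is downward closed and $T$ is upward closed in $\{1,\dots,m\}$, and $S\cup T=\{1,\dots,m\}$ since every cross-pair is comparable one way or the other. If $S$ and $T$ were disjoint they would partition an interval into a downward-closed and an upward-closed piece, which is automatic; the real content is that $S\cap T=\emptyset$. Suppose $i\in S\cap T$: then $H_i\supset K_j$ and $K_{j'}\supset H_i$ for some $j,j'$, hence $K_{j'}\supset K_j$, so $j'<j$, i.e. $K_{j'}$ is strictly larger. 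But then $K_{j'}\supset H_i\supset K_j$ with $K_j,K_{j'},H_i\in[x,y]$ and $H_i$ strictly between two elements of $\g_2$ — and since $\g_2$ consists of \emph{tightly} nested consecutive pairs we need $j'$ and $j$ non-consecutive, which is fine for $j'\le j-2$; this alone is not yet a contradiction, so tightness must be invoked more carefully, perhaps by taking $j,j'$ with $j-j'$ minimal and using that $K_{j'}$ and $K_{j'+1}$ are tightly nested to force $H_i$ to coincide with one of them, contradicting $\g_1\cap\g_2=\emptyset$. Once $S\cap T=\emptyset$ is established, downward-closedness of $S$ and upward-closedness of $T$ with $S\sqcup T=\{1,\dots,m\}$ give a split point; and a symmetric split on the $\g_2$ side, together with monotonicity across, forces either $S=\{1,\dots,m\}$ (so $\g_1>\g_2$) or $T=\{1,\dots,m\}$ (so $\g_2>\g_1$) — in either case $\g_1$ and $\g_2$ are nested, the desired contradiction.

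The main obstacle I anticipate is exactly the step showing $S\cap T=\emptyset$, i.e. ruling out a half-space of one segment being sandwiched strictly between two half-spaces of the other. This is where the \emph{tight} nesting hypothesis in the definition of a segment must do real work — without it the lemma is false — so the argument must locate a consecutive tightly nested pair $K_{j'}\supset K_{j'+1}$ straddling $H_i$ and conclude $H_i\in\{K_{j'},K_{j'+1}\}\subset\g_2$, contradicting $\g_1\cap\g_2=\emptyset$; I would set this up by choosing the offending indices to make $j-j'$ minimal and noting that minimality forces $j=j'+1$ together with no half-space properly between $K_{j'}$ and $K_{j'+1}$, so $H_i$ cannot be properly between them and hence equals one of them.
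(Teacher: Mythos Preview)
Your core idea is correct and matches the paper's: in $[x,y]$ any two distinct half-spaces are nested or transverse, non-overlapping rules out transversality and common elements, so $\g_1\cup\g_2$ is a chain; then tight nesting of each $\g_i$ prevents any element of one segment from lying strictly between two consecutive elements of the other, forcing $\g_1>\g_2$ or $\g_2>\g_1$.

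The paper's proof, however, compresses all of this into two sentences: once $\g_1\cup\g_2$ is linearly ordered, the conclusion ``follows, since each $\g_i$ is a segment.'' Your $S$/$T$ threshold machinery, the monotonicity bookkeeping, and the initial dead-end with $H_m,K_n$ are all unnecessary. The single clean step is exactly the one you isolate at the very end: if the two chains interleaved, some $H_i$ would sit strictly between consecutive $K_{j'}\supset K_{j'+1}$ (just take the largest $K$ above $H_i$ and the smallest $K$ below it in the total order on $\g_1\cup\g_2$), contradicting tight nesting and disjointness. You can delete everything before that observation and the proof becomes three lines.
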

  
  \begin{proof}
   
    As mentioned above, any two half-spaces in $[x,y]$ are either
    nested or transverse. Therefore, since $\gamma_1$ and $\gamma_2$ 
    are non-overlapping, their union is linearly ordered by
    inclusion. The result follows, since each $\gamma_i$ is a
    segment. \qedhere  

  \end{proof}
  
  \subsection*{Right-angled Artin groups}
  
  Let $\Gamma$ be a simplicial graph (i.e. a simplicial complex of
  dimension at most $1$), with vertex set $V(\Gamma)$ and 
  edge set $E(\Gamma)$. The \emph{right-angled Artin group} $A_{\Gamma}$
  is defined to be the group with generating set $V(\Gamma)$ and relations
  $\set{[v,w] \st \set{v,w} \in E(\Gamma)}$. That is, two generators
  commute if and only if they bound an edge in $\Gamma$, and there are no
  other defining relations. 

  There is a naturally defined non-positively curved cube complex
  which is a $K(A_{\Gamma},1)$ complex, obtained as a union
  of tori corresponding to complete subgraphs of $\Gamma$ (see Davis
  \cite[11.6]{Davis}, for example). The universal cover $X_{\Gamma}$ is a
  CAT(0) cube complex with a free action by $A_{\Gamma}$. The oriented
  edges of $X_{\Gamma}$ can be labeled by the generators of $A_{\Gamma}$
  and their inverses in a natural way: each such edge is a lift of a loop
  representing that generator (or its inverse). 

  This labeling has the property that two oriented edges are in the same
  $A_{\Gamma}$--orbit if and only if their labels agree. Also, the oriented
  edges that are dual to any given half-space will always have the same
  label, so the label may be assigned to the half-space
  itself. Half-spaces in the same $A_{\Gamma}$--orbit will have the same
  label. 

  The half-space labels lead to several useful observations. Each
  $2$--cell of $X_{\Gamma}$ is a square whose boundary is labeled by a
  commutator $[v,w]$, with $v \not= w$. It follows that no two
  half-spaces with the same label can be transverse in $X_{\Gamma}$.
  Since no label equals its inverse, $A_{\Gamma}$ acts without inversion
  on $X_{\Gamma}$. Some additional properties of $X_{\Gamma}$ related to
  the edge-labeling will be discussed in \secref{RAAG-like}.

\section{Automorphisms and characteristic sets} 
  
  \label{Sec:CharSets}

  In this section we discuss automorphisms of CAT(0) cube complexes
  and their characteristic sets. We define the \emph{essential
  characteristic set} and the \emph{essential minimal set} of a
  hyperbolic automorphism, and we determine the structures of these
  sets. The latter set is always finite-dimensional, whereas the
  former is a subcomplex which plays an essential role throughout the
  paper. Toward the end of the section, we characterize when these
  sets agree (\propref{Equality}) and when the essential
  characteristic set is finite-dimensional (\corref{XFiniteDim}). 

  \subsection*{Basic notions}

  Following Haglund \cite{Haglund}, an automorphism $g$ of a CAT(0) cube
  complex $X$ \emph{acts with inversion} if there is a half-space $H$
  such that $g(H) = \bH$. When this occurs, $g$ stabilizes the hyperplane
  $\partial H$. For any automorphism $g$ of $X$, the action of $g$ on the
  cubical subdivision of $X$ is always \emph{without} inversion. Note,
  however, that for some of our results, we will not be free to perform
  this modification; see \remref{Subdivision}. 
  
  For an automorphism $g$ of $X$, the \emph{translation distance} of $g$ is
  $\ell_g = \min_{x \in X}d(x,gx)$, where $x$ ranges over the vertices of
  $X$. If $g$ and all of its powers act without inversion, we say that $g$ is
  \emph{hyperbolic} if $\ell_g>0$ and \emph{elliptic} otherwise. Haglund
  showed that when $g$ is hyperbolic, there is an infinite combinatorial
  geodesic in $X$ that is preserved by $g$, on which $g$ acts as a
  translation of magnitude $\ell_g$. Any such geodesic will be called a
  \emph{combinatorial axis} for $g$. It has a natural orientation,
  relative to which the translation by $g$ is in the \emph{forward}
  direction. Note that $g$ and $g^{-1}$ have the same combinatorial axes,
  but they determine opposite orientations. 

  Haglund also showed that any two combinatorial axes for $g$ cross the
  same hyperplanes, in the same directions. That is, the set
  of half-spaces that are dual to oriented edges in any axis is
  independent of the choice of axis. We define the \emph{positive
    half-space axis of $g$}: 
  \[ 
    \axis_g^+ \ = \ \set{H \in \half(X) \st H \text{ is dual to a
    positively oriented edge in a combinatorial axis for } g}. 
  \]
  We also define the \emph{negative half-space axis} $\axis_g^- = \set{\bH
  \st H \in \axis_g^+}$; note that $\axis_g^- = \axis_{g^{-1}}^+$. The
  \emph{full half-space axis} is $\axis_g = \axis_g^+ \sqcup \axis_g^-$. 

  If $L$ is a combinatorial axis for $g$, then for every $H \in \axis_g^+$,
  the intersection $L \cap H$ is a ray containing the attracting end of $L$
  (since $L$ crosses $\partial H$ exactly once). Note also that $gH \not=
  H$ for all $H \in \axis_g$, for otherwise $g$ would fix the unique edge
  of $L$ dual to $H$, contradicting hyperbolicity of $g$.

  \begin{remark}\label{Rem:TransNest}

    For any distinct half-spaces $H,H' \in \axis_g^+$, either $H\trans
    H'$, $H\subset H'$, or $H'\subset H$. The only other possibilities
    are that $\bH' \subset H$ or $\bH \subset H'$, but these imply
    that either $H
    \cap H'$ or $\overline{H} \cap \overline{H}'$ is empty. But every
    combinatorial axis for $g$ meets both of these sets in an infinite
    ray. 

  \end{remark}
    
  \begin{remark}\label{Rem:TransNest2}

    For any $H \in \axis_g^+$ and 
    $n > 0$, if $H$ and $g^n H$ are not transverse, then $H \supset g^nH$.
    To see this, let $L$ be any oriented combinatorial axis for $g$. Let $e
    = (x,y)$ be the oriented edge on $L$ dual to $H$. Then $e$ lies on a
    geodesic edge path from $x$ to $g^nx$. In other words, $H \in [x,g^n x]$, 
    and so $g^n x \in H$. Since $x \notin H$, $g^nx \notin g^n H$. It
    follows that $H \supset g^nH$ (rather than $H \subset g^nH$). 

  \end{remark}

  Let $G$ be a group acting on $X$ by automorphisms. We will always assume
  (here and for the rest of the paper) that all elements of $G$ act without
  inversion. Under this assumption, Haglund showed that every element $g
  \in G$ is either elliptic or hyperbolic. 
  
  \subsection*{The minimal set, the characteristic set, and their
    product decompositions}

  \begin{definition}
    
    For any $g \in G$, the \emph{minimal set} of $g$ is the full 
    subcomplex $M_g \subseteq X$ generated by the vertices of $X$ that
    realize the translation distance of $g$. 
    
  \end{definition}

  Since $g$ and all of its powers act without inversion, there are two
  types of behavior for $M_g$. If $g$ is elliptic then $M_g$ is the
  subcomplex of fixed points of $g$. If $g$ is hyperbolic then $M_g$ is the
  smallest full subcomplex containing all combinatorial axes for $g$. It
  is non-empty, and \emph{every} vertex of $M_g$ is on a combinatorial
  axis, by \cite[Corollary 6.2]{Haglund}. 
  
  Next we define three more sets of half-spaces when $g\in G$ is
  hyperbolic: 
  \begin{align*}
    S_g \ &= \ \set{H \in \half \st H \text{ contains every combinatorial
          axis of } g } \\
    &= \ \set{H \in \half \st H \text{ contains } M_g}, \\
    \overline{S}_g \ &= \ \set{H \in \half \st H \not\in \axis_g \text{
                     and $H$ contains no combinatorial axis of } g} \\
    &= \ \set{H \in \half \st \bH \in S_g}, \\
    T_g \ &= \ \set{ H \in \half \st H \not\in \axis_g \text{ and
          $\partial H$ separates two combinatorial axes of } g}.    
  \end{align*}
  Recall that the half-spaces \emph{not} in $\axis_g$ are exactly
  those whose boundary hyperplanes do not cross any axis. Thus the
  aforementioned sets define a partition of $\half(X)$: 
  \[ 
    \half(X) \ = \ \axis_g \sqcup S_g \sqcup \overline{S}_g \sqcup
    T_g.
  \] 

  \begin{remark}

    For any group $\Gamma$ acting on $X$, Caprace and Sageev have defined
    a decomposition of $\half(X)$ into \emph{$\Gamma$--essential}, 
    \emph{$\Gamma$--half-essential}, and \emph{$\Gamma$--trivial}
    half-spaces \cite{CapraceSageev}. It can be shown that when $\Gamma =
    \gen{g}$ (with $g$ hyperbolic), these three collections of
    half-spaces coincide with $\axis_g$, $(S_g \cup \overline{S}_g)$, and
    $T_g$, respectively. 

    Using this perspective, some of the results below can be derived
    from results in \cite{CapraceSageev} and \cite{CFI}. Specifically,
    \lemref{CharProduct} is observed in Remark 3.4 of
    \cite{CapraceSageev}, and \lemref{Crossing} can be derived from Lemma
    2.6 of \cite{CFI} (see also \cite[Remark 2.11]{Fernos}). 

    For completeness, we include elementary proofs of these results,
    using the definitions of $\axis_g$, $S_g$, $\overline{S}_g$, and
    $T_g$ given above. 

  \end{remark}

  \begin{lemma}

    Suppose $g\in G$ is hyperbolic. If $H \in \axis_g$ and $K \in T_g$
    then $H \trans K$. 

  \end{lemma}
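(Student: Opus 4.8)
The plan is to unpack the definitions and show that all four quadrants $H\cap K$, $H\cap\bK$, $\bH\cap K$, $\bH\cap\bK$ are non-empty. Since $K\in T_g$, the hyperplane $\partial K$ separates two combinatorial axes $L_1$ and $L_2$ of $g$; say $L_1\subseteq K$ and $L_2\subseteq\bK$ (possibly after tail-truncation, but since $\partial K$ is not in $\axis_g$ it crosses neither axis, so each axis lies entirely on one side). On the other hand $H\in\axis_g$, so $\partial H$ is dual to an edge of every combinatorial axis, and in particular $H$ meets each $L_i$ in a ray containing its attracting end while $\bH$ meets each $L_i$ in a ray containing its repelling end.

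First I would extract, for each $i\in\{1,2\}$, vertices $a_i\in L_i\cap H$ and $b_i\in L_i\cap\bH$, which exist because $L_i$ crosses $\partial H$ exactly once (recall $L_i$ is a combinatorial axis and $H\in\axis_g$). Then $a_1\in H\cap K$, $b_1\in\bH\cap K$, $a_2\in H\cap\bK$, and $b_2\in\bH\cap\bK$, using $L_1\subseteq K$ and $L_2\subseteq\bK$. This gives all four intersections non-empty, which is precisely the definition of $H\trans K$.

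The only subtlety worth double-checking is the claim that each axis lies entirely on one side of $\partial K$: this is exactly what ``$K\notin\axis_g$'' buys us, since the half-spaces not in $\axis_g$ are those whose boundary hyperplanes cross no combinatorial axis, so $\partial K$ cannot separate any single axis from itself. Thus each $L_i$ is contained in $K$ or in $\bK$, and since $\partial K$ separates \emph{two} axes, one lies in $K$ and the other in $\bK$. I expect this to be the main (minor) obstacle — really just making sure the partition of $\half(X)$ is invoked correctly — while the rest is a direct verification from the definition of transversality.
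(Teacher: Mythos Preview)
Your proof is correct and follows essentially the same approach as the paper's own proof: pick axes $L_1\subseteq K$ and $L_2\subseteq\bK$ (which exist since $K\in T_g$), observe that every axis meets both $H$ and $\bH$ (since $H\in\axis_g$ and all axes cross the same hyperplanes), and read off the four non-empty intersections. The paper's version is simply a terser rendering of exactly this argument.
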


  \begin{proof}

    Let $L$, $L'$ be combinatorial axes of $g$ such that $L
    \subset K$ and $L' \subset \bK$. Every axis meets both $H$ and
    $\bH$. Thus all four intersections $K \cap H$, $K \cap \bH$, $\bK
    \cap H$, and $\bK \cap \bH$ are non-empty. \qedhere 

  \end{proof}

  \begin{definition}

    If $g\in G$ is hyperbolic, the \emph{characteristic set} of $g$ is
    the convex hull of $M_g$, denoted $X_g$. Equivalently, $X_g$ is
    the largest subcomplex of $X$ contained in $\bigcap _{H \in S_g}
    H$. 

  \end{definition}
  
  The collections of half-spaces $\axis_g$ and $T_g$ define CAT(0)
  cube complexes $\ess_g = X(\axis_g)$ and $\ellip_g = X(T_g)$ by the
  Sageev construction, called the \emph{essential characteristic set}
  and the \emph{elliptic factor} respectively. 

  \begin{lemma} \label{Lem:CharProduct}

    Suppose $g \in G$ is hyperbolic. Then there is a $\gen{g}$--equivariant
    isomorphism of cube complexes $X_g \cong \ess_g \times \ellip_g$. 

  \end{lemma}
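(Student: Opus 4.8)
The plan is to use the Sageev construction and the fact, recalled just above the lemma, that $\half(X) = \axis_g \sqcup S_g \sqcup \overline{S}_g \sqcup T_g$. First I would observe that the half-spaces in $S_g$ (those containing $M_g$) together with their complements $\overline{S}_g$ play no role inside the convex hull $X_g = \bigcap_{H \in S_g} H$: on the vertex set of $X_g$, every $H \in S_g$ is chosen by the principal ultrafilter and every $H \in \overline{S}_g$ is not. So the restriction of $\half(X)$ that ``sees'' $X_g$ is exactly $\axis_g \sqcup T_g$, with its induced partial order and the involution $H \mapsto \bH$. In Sageev/Roller language this says $X_g \cong X(\axis_g \sqcup T_g)$; one should check that $\axis_g \sqcup T_g$ is closed under the involution (it is, since both $\axis_g$ and $T_g$ are) and that every interval in it is finite (it is, being a subposet of $\half(X)$).

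Next I would show that the poset $\axis_g \sqcup T_g$ splits as a product of posets $\axis_g$ and $T_g$, in the sense appropriate to the Sageev construction — namely, that every half-space of $\axis_g$ is \emph{transverse} to every half-space of $T_g$, so there is no order relation between the two factors. This is precisely the content of the Lemma immediately preceding Lemma~\ref{Lem:CharProduct} in the excerpt (``If $H \in \axis_g$ and $K \in T_g$ then $H \trans K$''). Given this, the partially ordered set with involution $\axis_g \sqcup T_g$ is the ``direct product'' of the two: a principal ultrafilter on it is the same data as a principal ultrafilter on $\axis_g$ together with one on $T_g$, and the descending chain condition holds on the union iff it holds on each factor. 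Applying the Sageev construction is compatible with this product decomposition of posets — building a cube complex from $\half_1 \sqcup \half_2$ with $\half_1$ everywhere transverse to $\half_2$ yields $X(\half_1) \times X(\half_2)$ — which gives $X(\axis_g \sqcup T_g) \cong X(\axis_g) \times X(T_g) = \ess_g \times \ellip_g$.

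Finally I would address equivariance. The element $g$ permutes $\half(X)$ and preserves each block of the partition $\axis_g \sqcup S_g \sqcup \overline{S}_g \sqcup T_g$ (the axis half-spaces go to axis half-spaces, and $T_g$, being defined in terms of axes of $g$, is $\gen{g}$--invariant). Hence $g$ acts on the subposet $\axis_g \sqcup T_g$ preserving each factor, so the induced automorphism of $X(\axis_g \sqcup T_g)$ respects the product decomposition, i.e. it is the product of the $g$--actions on $\ess_g$ and $\ellip_g$. Chasing this through the identifications $X_g \cong X(\axis_g \sqcup T_g) \cong \ess_g \times \ellip_g$ gives the $\gen{g}$--equivariant isomorphism claimed.

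The main obstacle is making the phrase ``Sageev construction is compatible with products of posets'' precise and rigorous: one must verify that when $\half = \half_1 \sqcup \half_2$ with every element of $\half_1$ transverse to every element of $\half_2$, the principal ultrafilters on $\half$ are exactly the products of those on $\half_1$ and on $\half_2$, and that the cube-complex structure (edges, then cubes filled in whenever their $1$--skeleta appear) assembles into the product cube complex. This is the only genuinely structural point; the identification $X_g \cong X(\axis_g \sqcup T_g)$ and the equivariance are bookkeeping once the partition of $\half(X)$ and the transversality lemma are in hand.
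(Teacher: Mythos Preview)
Your proposal is correct and follows essentially the same approach as the paper. The paper proceeds in the reverse order---first invoking \cite[Lemma 2.5]{CapraceSageev} to get $\ess_g \times \ellip_g \cong X(\axis_g \cup T_g)$ from $\axis_g \trans T_g$, then explicitly embedding $X(\axis_g \cup T_g) \into X$ by extending each principal ultrafilter with all of $S_g$ and checking the image is $X_g$---but the ingredients and logic are the same as yours; the ``product compatibility of the Sageev construction'' that you flag as the main obstacle is exactly what the paper outsources to the Caprace--Sageev reference.
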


  \begin{proof}

    First note that since $\axis_g \trans T_g$, there is an isomorphism
    $\ess_g \times \ellip_g \cong X(\axis_g \cup T_g)$, by \cite[Lemma
    2.5]{CapraceSageev}. We shall define an embedding $X(\axis_g \cup T_g)
    \into X$ and show that its image is $X_g$. 

    The map is defined by extending each principal ultrafilter on $\axis_g
    \cup T_g$ to an ultrafilter on $\half(X)$ by including every half-space
    in $S_g$. These half-spaces have non-trivial intersection with every
    half-space in $\axis_g \cup T_g$, and also with each other, so this
    rule does indeed define an ultrafilter. Moreover, no half-space in
    $S_g$ is contained in any half-space of $\axis_g \cup T_g$, and
    $S_g$ itself contains no descending chains: for each $H\in S_g$,
    $\partial H$ is separated from $M_g$ by only finitely many
    hyperplanes. Therefore the
    descending chain condition is still satisfied. Thus, each vertex of
    $X(\axis_g \cup T_g)$ is mapped to a vertex of $X$. It is clear that
    adjacent vertices map to adjacent vertices, so the map is an embedding
    of cube complexes. 

    Next, the vertices of $X_g$ are exactly the vertices whose principal
    ultrafilters include all half-spaces of $S_g$. These are exactly the
    vertices in the image of our map, so this image is $X_g$. 

    Equivariance holds because the $\gen{g}$--actions on $X(\axis_g)$,
    $X(T_g)$, and $X = X(\half(X))$ are all simultaneously induced by the
    action of $\gen{g}$ on the half-spaces of $X$. \qedhere 

  \end{proof}

  The next result concerns crossing of half-spaces of $\axis_g$. Namely,
  two such half-spaces cross in $\ess_g$ if and only if they cross in
  $X$:

  \begin{lemma} \label{Lem:Crossing}

    Suppose $g \in G$ is hyperbolic. If $H, H' \in \axis_g$ and $H \trans
    H'$ in $X$, then $H \trans H'$ in $\ess_g$. That is, there is a square
    $S \subset \ess_g$ containing edges $e$, $e'$ that are dual to $H$ and
    $H'$ respectively. 

  \end{lemma}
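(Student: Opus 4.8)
The plan is to transfer a transverse pair $H \trans H'$ in $X$ into the subcomplex $\ess_g = X(\axis_g)$ by exhibiting the required square directly. Since $H, H' \in \axis_g$ and they are transverse, in $X$ there is a square $S$ whose two pairs of opposite edges are dual to $\partial H$ and $\partial H'$ respectively. What needs checking is that the four vertices of $S$, regarded as principal ultrafilters on $\half(X)$, restrict to \emph{distinct} principal ultrafilters on $\axis_g$ — i.e. that no vertex of $S$ is identified with another vertex of $S$ once we forget all half-spaces outside $\axis_g$. Two vertices of $S$ differ only on the half-spaces dual to the edges of $S$ between them, and those are exactly (restrictions of) $H$, $H'$, which lie in $\axis_g$. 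So the four restricted ultrafilters are genuinely distinct, and adjacent vertices of $S$ still differ on a single element of $\axis_g$; hence the image of $S$ in $\ess_g$ is again a square with edges dual to $H$ and $H'$.

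To make this rigorous I would work with the embedding-by-restriction correspondence already used in the proof of \lemref{CharProduct}: the vertices of $\ess_g = X(\axis_g)$ are the principal ultrafilters on $\axis_g$, and restriction $\half(X) \to \axis_g$ sends the principal ultrafilter of a vertex $v \in X$ to a principal ultrafilter on $\axis_g$ (the descending chain condition survives restriction). First I would fix a vertex $v_0$ of the square $S$ in $X$ and consider its restriction $\sigma_0$ to $\axis_g$. Then I would produce the other three vertices of the desired square in $\ess_g$ by flipping $\sigma_0$ on $\{H\}$, on $\{H'\}$, and on $\{H, H'\}$; the resulting ultrafilters on $\axis_g$ are principal because they differ from $\sigma_0$ on finitely many elements, and they are pairwise distinct and adjacent in the pattern of a square. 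The only thing to verify is that these are legitimate ultrafilters on $\axis_g$, i.e. the flipped half-space choices are still pairwise non-disjoint — but that is immediate from $H \trans H'$ in $X$ (all four of $H\cap H'$, $H\cap\bH'$, $\bH\cap H'$, $\bH\cap\bH'$ are non-empty) together with the fact that the remaining choices coincide with those of $\sigma_0$.

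The main obstacle — and the reason the lemma is not entirely trivial — is ruling out the possibility that the square in $X$ "collapses" in $\ess_g$: a priori, passing from $X$ to the smaller half-space set $\axis_g$ could merge two distinct vertices of $S$. The content of the argument is precisely that this cannot happen because the hyperplanes separating vertices of $S$ are $\partial H$ and $\partial H'$, both of which survive in $\axis_g$. So after setting up the restriction map, the proof reduces to the observation that for vertices $x, y$ of $X$ the restricted interval $[x,y] \cap \axis_g$ is non-empty whenever $[x,y] \subseteq \axis_g$ — trivial here since $[x,y]$ for adjacent vertices of $S$ is a single half-space lying in $\axis_g$. I expect the write-up to be short: set up restriction, flip on subsets of $\{H,H'\}$, check the ultrafilter condition using transversality, and conclude that the four vertices span a square in $\ess_g$ with edges dual to $H$ and $H'$.
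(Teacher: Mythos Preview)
Your proposal is correct and is essentially the same argument as the paper's: both restrict the principal ultrafilters of the vertices of the square $S \subset X$ to $\axis_g$ (the paper packages this as a ``combinatorial retraction $X \to \ess_g$''), and then observe that because the edges of $S$ are dual to $H, H' \in \axis_g$, no two of these restricted ultrafilters coincide, so the image is still a square. One small cleanup: your second-paragraph justification that the flipped sets are ultrafilters is cleaner if you simply note (as you do in the first paragraph) that they are restrictions of the principal ultrafilters at the other three vertices of $S$---transversality of $H$ and $H'$ alone does not verify that $\bH$ meets every other $K \in \sigma_0$.
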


  \begin{proof}

    Recall that $\ess_g = X(\axis_g)$. We
    may embed $\ess_g$ as a convex subcomplex of $X$ in such a way
    that the induced map on half-spaces $\axis_g \to \half(X)$ is
    inclusion; this follows from \lemref{CharProduct}, by choosing a
    vertex $v \in \ellip_g$ and identifying $\ess_g$ with $\ess_g
    \times \set{v}$ in $X_g$. 

    There is a combinatorial retraction $X \to \ess_g$ defined in terms of
    ultrafilters by restriction: each principal ultrafilter on $\half(X)$
    is sent to its intersection with $\axis_g$. The resulting ultrafilter
    still satisfies the descending chain condition, and therefore defines a
    vertex in $\ess_g$. Two adjacent vertices of $X$ will either map to
    adjacent vertices or to the same vertex. This map extends to cubes, and
    each cube maps to a cube in $\ess_g$ by a coordinate projection. More
    specifically, an edge in $X$ is collapsed if and only if its dual
    half-spaces are not in $\axis_g$. It follows that if a square in $X$ is
    dual to two half-spaces in $\axis_g^+$, then its image in $\ess_g$ is
    also a square, dual to the same two half-spaces. Thus, if $H, H' \in
    \axis_g^+$ are transverse in $X$, they are transverse in $\ess_g$.
    \qedhere 

  \end{proof}

  Next we continue to examine the structure of $X_g$. 

  \begin{lemma}

    Suppose $g \in G$ is hyperbolic. Then $g$ acts as an elliptic
    automorphism of $\ellip_g$. 

  \end{lemma}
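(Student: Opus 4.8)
The plan is to show that $g$ acts on $\ellip_g = X(T_g)$ with a fixed point, hence elliptically. The key observation is that the half-spaces in $T_g$ are precisely those whose boundary hyperplanes separate two combinatorial axes of $g$ without crossing any axis; unlike the half-spaces in $\axis_g$, these are not ``moved off themselves'' by $g$ in a systematic way. So I would first examine how $g$ permutes $T_g$: since $g$ preserves the set of combinatorial axes and sends hyperplanes to hyperplanes, $g$ preserves the partition $\half(X) = \axis_g \sqcup S_g \sqcup \overline{S}_g \sqcup T_g$, and in particular $g(T_g) = T_g$. Thus $g$ induces an automorphism of $X(T_g) = \ellip_g$, which is already implicit in the product decomposition $X_g \cong \ess_g \times \ellip_g$ from \lemref{CharProduct}.

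Next, to produce a fixed point, I would build a $g$--invariant principal ultrafilter on $T_g$. The natural candidate is the following: pick a combinatorial axis $L$ for $g$ and consider, for each hyperplane $h = \partial H$ with $H \in T_g$, the half-space on the side of $h$ containing $L$. Since $H \in T_g$ means $\partial H$ does not cross any axis, $L$ lies entirely on one side of $\partial H$, so this choice is well-defined, and it is an ultrafilter because any two half-spaces both containing $L$ must intersect. This ultrafilter is $g$--invariant: $g(L)$ is again an axis, hence lies on the same side of each $\partial H \in T_g$ as $L$ does (any two axes are separated by $\partial H$ or lie on the same side — by definition of $T_g$ the separating case would require $h$ to separate them, which is allowed, so I need to be a little careful here). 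The cleaner route is: the restriction to $T_g$ of the principal ultrafilter of a vertex $v$ on $L$ defines a vertex $\bar v \in \ellip_g$ (the $\ellip_g$--coordinate of $v$ under the product decomposition), and I claim $\bar v$ is independent of which axis vertex $v$ we choose and is $g$--fixed.

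So the cleanest argument: under the isomorphism $X_g \cong \ess_g \times \ellip_g$ of \lemref{CharProduct}, a combinatorial axis $L$ for $g$ sits inside $X_g$, and its projection to the $\ellip_g$ factor is a single point. Indeed, consecutive vertices of $L$ differ by an edge dual to a half-space in $\axis_g$ (not $T_g$), so the $\ellip_g$--coordinate is constant along $L$; call this point $v_0 \in \ellip_g$. Since $g$ preserves $L$, $g$ fixes the constant $\ellip_g$--coordinate $v_0$, so $g$ fixes $v_0 \in \ellip_g$ and hence acts elliptically on $\ellip_g$. The main obstacle I anticipate is justifying carefully that every edge of a combinatorial axis is dual to a half-space in $\axis_g$ rather than in $T_g$ or $S_g$ — but this is essentially the definition of $\axis_g^+$ as the set of half-spaces dual to positively oriented edges of an axis, together with the fact established earlier that different axes cross the same hyperplanes in the same directions, so I expect this step to be short. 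A secondary point worth a sentence is confirming that ``having a fixed vertex'' is the correct meaning of ``elliptic'' here (translation distance zero), which follows from the definitions since we assume no inversions.
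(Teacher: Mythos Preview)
Your proposal is correct and rests on the same key observation as the paper: edges of a combinatorial axis are dual to half-spaces in $\axis_g$, never in $T_g$, so the $\ellip_g$--coordinate is constant along any axis. The only difference is packaging: the paper argues by contradiction (if $g$ were hyperbolic on $\ellip_g$, the projection of an axis to $\ellip_g$ would be an axis and would cross a $T_g$--hyperplane), whereas you argue directly by exhibiting a fixed vertex $v_0 \in \ellip_g$ as the constant projection of an axis. Your direct version is arguably cleaner, since it sidesteps the need to justify that the projection of an axis in a product would again be an axis.
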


  \begin{proof}

    If not, any axis $L$ of $g$ acting on $X_g = \ess_g \times \ellip_g$
    would project onto an axis in $\ellip_g$, and $L$ would then cross a
    hyperplane bounding a half-space in $T_g$. However, no axis of $g$
    crosses such a hyperplane. \qedhere 

  \end{proof}

  Accordingly, there is a non-empty subcomplex $\fix_g \subseteq \ellip_g$
  consisting of the fixed points of the $\gen{g}$--action on $\ellip_g$. It
  is a subcomplex because $\gen{g}$ acts without inversion. 

  \begin{lemma}

    Suppose $g \in G$ is hyperbolic. Then there is a
    $\gen{g}$--invariant subcomplex $\mess_g \subseteq \ess_g$ 
    such that $M_g = \mess_g \times \fix_g$ under the identification
    of $X_g$ with $\ess_g \times \ellip_g$. 

  \end{lemma}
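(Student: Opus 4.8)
The plan is to pass to the characteristic set, use the additivity of the combinatorial metric on a product, and then read off which vertices of $X_g$ realize $\ell_g$. First I note that $g$ preserves $M_g$ (since $d(gx,g\,gx)=d(x,gx)$), hence preserves its convex hull $X_g$, and by \lemref{CharProduct} we may identify $X_g$ with $\ess_g \times \ellip_g$ with $g$ acting diagonally; moreover $M_g \subseteq X_g$. Recall that for a product of CAT(0) cube complexes the combinatorial metric is additive: for vertices $p=(u,w)$ and $p'=(u',w')$ of $\ess_g \times \ellip_g$ the half-spaces of the product separating them are exactly those of the form $H \times \ellip_g$ with $H \in [u,u']$ together with those of the form $\ess_g \times H'$ with $H' \in [w,w']$, so $d(p,p') = d(u,u') + d(w,w')$. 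In particular $d(p,gp) = d(u,gu) + d(w,gw)$.

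By the previous lemma $g$ acts elliptically on $\ellip_g$, so $\min_w d(w,gw) = 0$, attained precisely on the vertices of $\fix_g$. Since every vertex of $X$ realizing $\ell_g$ lies in $M_g \subseteq X_g$, the displayed formula gives
\[
  \ell_g \ = \ \min_{(u,w)} \big( d(u,gu) + d(w,gw) \big) \ = \ \min_u d(u,gu) \ =: \ m ,
\]
$u$ ranging over the vertices of $\ess_g$, and a vertex $(u,w)$ of $X_g$ realizes $\ell_g$ if and only if $d(u,gu) = m$ and $w \in \fix_g$. Let $\mess_g$ be the full subcomplex of $\ess_g$ generated by the vertices $u$ with $d(u,gu) = m$. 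Its vertex set is non-empty (as $\ell_g$ is realized) and $g$--invariant (as $d(gu,g^2u) = d(u,gu)$), so $\mess_g$ is a non-empty $\gen{g}$--invariant subcomplex, and by the previous sentence the vertex set of $M_g$ coincides with the vertex set of $\mess_g \times \fix_g$.

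It remains to upgrade this equality of vertex sets to an equality of subcomplexes, and this is the one point to be careful about. Both $M_g$ and $\mess_g \times \fix_g$ are subcomplexes of $X_g$, which is convex and hence full in $X$, so a cube of $X$ with all vertices in $X_g$ lies in $X_g$; thus it suffices to compare cubes inside $X_g = \ess_g \times \ellip_g$. Every cube of the product is a product $C_A \times C_B$ of a cube $C_A \subseteq \ess_g$ with a cube $C_B \subseteq \ellip_g$, and its vertices are exactly the pairs $(a,b)$ with $a$ a vertex of $C_A$ and $b$ a vertex of $C_B$. If all vertices of $C_A \times C_B$ lie in (the vertex set of) $M_g$, then every vertex of $C_A$ satisfies $d(\cdot,g\cdot) = m$ and every vertex of $C_B$ is fixed by $g$; since $\mess_g$ is full by construction and $\fix_g$ is full — a cube of $\ellip_g$ whose vertices are all fixed by $g$ is carried by $g$ to a cube with the same vertex set, hence (being the convex hull of those vertices) to itself, so $g$ restricts to an automorphism of that cube fixing every vertex, i.e.\ the identity — we get $C_A \subseteq \mess_g$ and $C_B \subseteq \fix_g$, so $C_A \times C_B \subseteq \mess_g \times \fix_g$. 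Conversely any cube of $\mess_g \times \fix_g$ has all its vertices in the vertex set of $M_g$ and hence lies in $M_g$, since $M_g$ is the full subcomplex of $X$ on that vertex set. Therefore $M_g = \mess_g \times \fix_g$, as required.
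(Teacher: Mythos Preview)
Your proof is correct and slightly different in spirit from the paper's. The paper argues directly from the half-space structure: for $x\in M_g$ the vertices $x$ and $gx$ lie on a common combinatorial axis, so no half-space in $T_g$ separates them, forcing the $\ellip_g$--coordinate of $x$ to be fixed by $g$; it then defines $\mess_g$ as the projection of $M_g$ to the first factor and observes that any vertex of $\mess_g\times\fix_g$ is moved distance $\ell_g$. You instead use the additivity of the combinatorial metric on a product together with ellipticity of $g$ on $\ellip_g$ to characterize the vertices of $X_g$ realizing $\ell_g$ as exactly those $(u,w)$ with $d(u,gu)$ minimal and $w\in\fix_g$, and you define $\mess_g$ as the \emph{full} subcomplex on such $u$. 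The two definitions of $\mess_g$ have the same vertex set (as your displacement computation shows), and your choice makes the fullness step cleaner. Your treatment of the passage from equality of vertex sets to equality of subcomplexes is in fact more careful than the paper's: you explicitly verify that $\fix_g$ is full in $\ellip_g$ and use that every cube of the product is a product of cubes, whereas the paper invokes only fullness of $M_g$ at this point. Either route is short; yours is a touch more self-contained and metric, the paper's a touch more aligned with the half-space language used elsewhere.
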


  The subcomplex $\mess_g$ is called the \emph{essential minimal set} for
  $g$. 

  \begin{proof}

    If $x$ is a vertex of $M_g$ then no half-space of $T_g$ separates $x$
    from $gx$, since $x$ and $gx$ are on a combinatorial axis. Thus the
    principal ultrafilters at $x$ and at $gx$ agree on half-spaces in
    $T_g$. That is, $g$ fixes the second coordinate of $x$ in $\ess_g
    \times \ellip_g$. Therefore $M_g \subseteq \ess_g \times \fix_g$. 

    Let $\mess_g$ be the projection of $M_g$ onto the first factor of
    $\ess_g \times \ellip_g$, so $M_g \subseteq \mess_g \times \fix_g$. Since
    $\gen{g}$ acts trivially on $\fix_g$, any two vertices of $\mess_g
    \times \fix_g$ with the same first coordinate are moved the same
    distance by $g$. It follows that every vertex of $\mess_g \times
    \fix_g$ is moved distance $\ell_g$, and hence is in $M_g$. Since $M_g$
    is the full subcomplex spanned by its vertices, we have $M_g = \mess_g
    \times \fix_g$. Further, $\gen{g}$--invariance of $\mess_g$ is clear, because
    both $M_g$ and its product structure are $\gen{g}$--invariant. \qedhere

  \end{proof}

  \begin{lemma} \label{Lem:EdgeAxis}

    Suppose $g \in G$ is hyperbolic. Let $e$ be an edge of $M_g$ which
    projects to an edge in the factor $\mess_g$. Then $e$ is on a
    combinatorial axis of $g$. 

  \end{lemma}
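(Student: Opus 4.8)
The plan is to show that the edge $e$, which lies in the product $M_g = \mess_g \times \fix_g$ and projects to an edge of the first factor, is dual to a half-space in $\axis_g$; once we know this, we produce an axis through $e$. Write $e = (x,y)$ with $x$ the minimal and $y$ the maximal vertex. Since $e$ projects to an edge in $\mess_g$ and to a point in $\fix_g$, the dual half-space $H$ of $e$ separates $x$ from $y$ and lies in $\half(\ess_g) = \axis_g$ (after the identification from \lemref{CharProduct}); in particular $H \notin T_g$, so $\partial H$ does not separate two axes, and $H \notin S_g \cup \overline{S}_g$ since those half-spaces contain or are disjoint from $M_g$ and hence cannot be dual to an edge of $M_g$. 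Thus $H \in \axis_g^+$ (choosing orientation so that $y \in H$).

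Next I would produce a combinatorial axis through $e$. Since $H \in \axis_g^+$, by definition there is some combinatorial axis $L$ for $g$ with an oriented edge $e'$ dual to $H$. The issue is that $e'$ need not equal $e$. To fix this, I would use the involution on the neighborhood $N(\partial H)$, which is convex by \cite[Theorem 2.12]{Haglund}: both $e$ and $e'$ are dual to $\partial H$, so both lie in $N(\partial H)$, and the two endpoints of each edge are swapped by the involution fixing $\partial H$ pointwise. Concretely, I would take the axis $L$, which crosses $\partial H$ exactly once at $e'$, and ``slide'' it across $N(\partial H)$ using the median/retraction machinery so that it crosses $\partial H$ at $e$ instead. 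More precisely: let $x'$ be the initial vertex of $e'$ and use the median $m(x, x', y')$ type argument, or directly note that since $x, y \in M_g$ and every vertex of $M_g$ is on an axis by \cite[Corollary 6.2]{Haglund}, it suffices to show $x$ and $y$ are consecutive on \emph{some} axis. Because $e$ is an edge of $M_g$ with $[x,y] = \{H\}$ a singleton and $H \in \axis_g$, $y = gx$ is impossible only if $\ell_g > 1$; in general one concatenates a geodesic from a basepoint on an axis to $x$, the edge $e$, and the $g$--translate of the reversed first path, checking this crosses no hyperplane twice — equivalently, that it is a geodesic from a vertex $v$ on an axis to $gv$.

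The main obstacle is precisely this last verification: showing that the concatenated path through $e$ is a geodesic of length $\ell_g$, i.e.\ that appending $e$ to a geodesic within $M_g$ does not force a hyperplane to be crossed twice. I would handle this by working with ultrafilters: the vertices $x, y$ of $M_g$ have principal ultrafilters agreeing on $T_g$ and on $S_g \cup \overline{S}_g$, differing only within $\axis_g$, and $d(x,y) = 1$ means they differ exactly on the pair $\{H, \bH\}$. Picking a vertex $v$ on an axis with $v = m(v, x, \cdot)$ appropriately, the path $v \to x \to y \to gv$ has length equal to $d(v,gv)$ provided no half-space dual to an edge of $v \to x$ is also dual to $e$ or to $x \to gv$; this follows because $H \in \axis_g$ is crossed by every axis exactly once with the correct orientation, so $H \notin [v,x]$ and $H \notin [y, gv] = g[g^{-1}y, v]$. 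Once the path is geodesic, it extends bi-infinitely by $\gen{g}$--translation to a combinatorial axis containing $e$, completing the proof. \qed
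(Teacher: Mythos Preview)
Your first step is fine: since $e$ projects to an edge in $\mess_g \subset \ess_g$, its dual half-space $H$ lies in $\axis_g$. The gap is in the second step. You correctly reduce to showing that $y$ lies on a geodesic from $x$ to $gx$ (equivalently, that $gx \in H$), but your justification is incomplete. Knowing $H \in \axis_g$ tells you that every axis crosses $\partial H$ exactly once, but it does not tell you \emph{where} along the axis through $x$ this crossing occurs; a priori the axis through $x$ could cross $\partial H$ between $g^kx$ and $g^{k+1}x$ for some $k > 0$, leaving $gx \notin H$. Your concatenated path $v \to x \to y \to gv$ also does not work as written: the $g$--translate of the reversed path $v \to x$ is $gx \to gv$, which begins at $gx$, not at $y$, so the pieces do not connect. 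And checking only that $H$ itself is not crossed twice is not enough to certify the path as a geodesic.

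The paper's proof supplies exactly the missing step, organized as a contrapositive. Assume $e$ lies on no axis. Then $y \ne m(x,y,gx)$, and since $[x,y] = \{H\}$, the only half-space that can separate $y$ from both $x$ and $gx$ is $H$; hence $gx \notin H$. Symmetrically, $x \ne m(y,x,gy)$ forces $gy \in H$. Thus $\partial H$ separates $gx$ from $gy$, so $\partial H$ is dual to $ge$, giving $g\partial H = \partial H$; since $g$ acts without inversion, $gH = H$. This contradicts $H \in \axis_g$. You can splice this directly into your approach: from $H \in \axis_g$ deduce $gH \neq H$, which rules out the simultaneous failure of both median conditions, so at least one holds and furnishes the axis through $e$.
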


  \begin{proof}

    We prove the contrapositive. 
    Let $e = (x,y)$ where $x$ and $y$ are vertices of $M_g$. If $e$ is not
    on any combinatorial axis, then $y$ is not on any geodesic from $x$ to
    $gx$, so $y \not= m(x,y,gx)$. There must be a half-space containing $y$
    but not $x$ or $gx$. The half-space $H$ dual to $e$ is the only
    possibility, since $[x,y] = \set{H}$. 

    Similarly, $x$ is not on any geodesic from $y$ to $gy$, so there must
    be a half-space containing $x$ but not $y$ or $gy$. This can only be
    $\bH$, since $[y,x] = \set{\bH}$. 

    Thus $\partial H$ separates $gx$ from $gy$, and hence is dual to
    $ge$; therefore $g \partial H = \partial H$. Since $g$ is not an
    inversion, we have that $gH = H$. Thus $H \not\in \axis_g$ and $e$
    does not project to an edge in $\mess_g$. \qedhere 

  \end{proof}

  \subsection*{Relationship between $\ess_g$ and $\mess_g$} 

  The convex hull of $\mess_g \subseteq \ess_g$ is $\ess_g$. To see this,
  note that every edge of $\mess_g$ is dual to a half-space of $\ess_g$, by
  \lemref{EdgeAxis}; and conversely, every half-space in $\axis_g^+$ is
  dual to an edge in an axis, and hence to an edge in $\mess_g$. Hence no
  half-space of $\ess_g$ contains $\mess_g$, and therefore $C(\mess_g)$ is
  the intersection of the empty set of half-spaces of $\ess_g$. 

  In this section, we will establish the basic structure of $\mess_g$ and
  address when $\mess_g$ and $\ess_g$ are the same.

  \begin{definition}
    Let $C$ be any cube in $\ess_g$ and let $A$ be the set of elements in
    $\axis_g^+$ dual to the edges of $C$. Let $x,y$ be the two vertices of
    $C$ such that $A=[x,y]$. We will call $x$ the \emph{minimal vertex} of
    $C$ and $y$ the \emph{maximal vertex} of $C$. 

  \end{definition}

  \begin{lemma} \label{Lem:CubeAxis}
  
    Let $C \times \set{v}$ be a cube in $\mess_g \times \set{v} \subseteq
    \mess_g \times \fix_g$ with minimal and maximal vertices $x$ and $y$.
    Then $[x,y] \subseteq [x,gx]$. Thus, $y$ lies on some combinatorial
    axis of $g$ containing $x$. This axis lies inside $\mess_g \times
    \set{v}$. 

  \end{lemma}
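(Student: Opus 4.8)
The plan is to establish the inclusion $[x,y] \subseteq [x,gx]$ directly, to deduce from it that $y$ lies on a geodesic from $x$ to $gx$, and then to build the required axis by concatenating $\gen{g}$--translates of such a geodesic.

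By the definition of the minimal and maximal vertices of the cube $C \times \set{v}$, the interval $[x,y]$ is exactly the set $A$ of half-spaces in $\axis_g^+$ that are dual to the edges of $C \times \set{v}$; in particular every $H \in [x,y]$ lies in $\axis_g^+$, and $x \notin H$. So the crux is to show $gx \in H$ for each such $H$. First I would fix the edge $e = (x,x_1)$ of $C \times \set{v}$ incident to the minimal vertex $x$ and dual to $H$, so $x_1 \in H$. Since $e$ projects to an edge of the factor $\mess_g$, \lemref{EdgeAxis} supplies a combinatorial axis $L$ for $g$ containing $e$, and hence containing $x$. Because $H \in \axis_g^+$ while $\axis_g^+$ and $\axis_g^-$ are disjoint, the forward orientation of $L$ must run from $x$ to $x_1$, so $x_1$ is the immediate successor of $x$ along $L$. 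Now $g$ translates $L$ forward by $\ell_g \geq 1$, so $gx$ lies on $L$ at or beyond $x_1$; since $L \cap H$ is a ray containing the attracting end of $L$ and contains $x_1$, it also contains $gx$. Thus $gx \in H$, which gives $[x,y] \subseteq [x,gx]$.

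Next I would note that this inclusion forces $y = m(x,y,gx)$: if some hyperplane separated $y$ from both $x$ and $gx$, the half-space on the $y$--side would lie in $[x,y] \setminus [x,gx]$. Hence $y$ lies on a geodesic edge path from $x$ to $gx$ and $[x,gx] = [x,y] \sqcup [y,gx]$. I would then take a geodesic $\alpha$ from $x$ to $y$ inside the cube $C \times \set{v}$ and any geodesic $\beta$ from $y$ to $gx$; their concatenation $\alpha \cdot \beta$ crosses each hyperplane of $[x,gx]$ exactly once, so it is a geodesic from $x$ to $gx$ of length $d(x,gx) = \ell_g$. Concatenating all translates $g^n(\alpha\cdot\beta)$ yields a $\gen{g}$--invariant bi-infinite edge path $\widehat{L}$ on which $g$ acts as a translation of magnitude $\ell_g$; since $x$, being a vertex of $M_g$, realizes the translation distance $\ell_{g^n} = n\ell_g$ for every $n$, every finite subpath of $\widehat{L}$ is geodesic, so $\widehat{L}$ is a combinatorial axis for $g$. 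It passes through $x$ and through $y$, the latter lying on $\alpha$.

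Finally, to locate $\widehat{L}$: as a combinatorial axis, all its vertices lie in $M_g = \mess_g \times \fix_g$, and it crosses no hyperplane bounding a half-space of $T_g$, so its coordinate in the factor $\ellip_g$ is constant; since $x \in \widehat{L}$ has $\ellip_g$--coordinate $v$, all of $\widehat{L}$ lies in $\mess_g \times \set{v}$. I expect the main obstacle to be the orientation bookkeeping in the first step — matching the orientation of $e$ with the forward direction of the axis $L$ produced by \lemref{EdgeAxis}, and confirming that $gx$ has advanced at least as far as $x_1$ along $L$ — while the potential non-convexity of $\mess_g$ turns out to be harmless, since any combinatorial axis automatically lies in $M_g$ and at constant $\ellip_g$--coordinate.
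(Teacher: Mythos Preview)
Your proof is correct and follows essentially the same approach as the paper's: use \lemref{EdgeAxis} to place each edge of $C \times \{v\}$ at $x$ on a combinatorial axis (hence $gx \in H$ for every $H \in [x,y]$), then concatenate $\gen{g}$--translates of a geodesic $x \to y \to gx$ to obtain the desired axis. Your treatment is in fact more careful than the paper's about the orientation bookkeeping (matching the forward direction of $L$ with $(x,x_1)$ via $H \in \axis_g^+$) and about why the resulting axis stays in $\mess_g \times \{v\}$.
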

  
  \begin{proof}
  
    Let $e = (x, z)$ be any oriented edge in $C \times \set{v}$ with
    initial vertex $x$, and let $H \in [x,y]$ be the half-space dual to
    $e$. Since $C \subset \mess_g$, the edge $e$ lies on a combinatorial
    axis of $g$. In particular, the vertex $z$ lies on a geodesic edge path
    from $x$ to $gx$. Since this geodesic can cross $\partial H$ only once,
    $gx \in H$. In other words, $H \in [x,gx]$. This is true for every $H
    \in [x,y]$, so $[x,y] \subseteq [x,gx]$. For the last conclusion, let
    $\alpha$ be the concatenation of geodesic edge paths from $x$ to $y$
    and from $y$ to $gx$. Then $\alpha$ does not cross any hyperplane
    twice, since such a hyperplane would separate $y$ from $x$ and $gx$.
    The concatenation of $\alpha$ and its $g$--translates is a
    combinatorial axis containing $x$ and $y$. The axis lies in
    $\mess_g\times\set{v}$ by $\gen{g}$--invariance of the product
    decomposition $\mess_g \times \fix_g$. \qedhere

  \end{proof}

  We now show that $\mess_g$ is always finite-dimensional.  
  
  \begin{lemma} \label{Lem:MFiniteDim}

    Suppose $g \in G$ is hyperbolic. Then 
    $\mess_g$ is finite-dimensional, with dimension bounded by the
    translation distance $\ell_g$ of $g$. 
  
  \end{lemma}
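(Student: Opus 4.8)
The plan is to reduce the statement directly to \lemref{CubeAxis}, which already does all of the geometric work. Concretely, we bound the dimension of an arbitrary cube of \mess_g by $\ell_g$, vertex by vertex.

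Fix a cube $C$ of \mess_g, say of dimension $d$. Choose any vertex $v$ of \fix_g, so that $C \times \set{v}$ is a cube of $\mess_g \times \set{v} \subseteq \mess_g \times \fix_g = M_g$. Let $x$ and $y$ be its minimal and maximal vertices. By the definition of these vertices, $[x,y]$ is precisely the set of half-spaces in $\axis_g^+$ dual to the edges of $C \times \set{v}$, a set of cardinality $d$; equivalently $d(x,y) = d$. Now \lemref{CubeAxis} gives $[x,y] \subseteq [x,gx]$, hence
\[
  d \ = \ \abs{[x,y]} \ \le \ \abs{[x,gx]} \ = \ d(x,gx).
\]
Since $x$ is a vertex of $M_g$, it realizes the translation distance of $g$, so $d(x,gx) = \ell_g$. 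Therefore $d \le \ell_g$.

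As $C$ was an arbitrary cube of \mess_g, every cube of \mess_g has dimension at most $\ell_g$, and so \mess_g is finite-dimensional with $\dim \mess_g \le \ell_g$, as claimed.

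There is no real obstacle here once \lemref{CubeAxis} is in hand; the only points requiring (routine) care are the two identifications used above — that the cardinality of an interval equals the combinatorial distance between its endpoints (recalled in the preliminaries), giving $\abs{[x,y]} = \dim C$ and $\abs{[x,gx]} = d(x,gx)$, and that vertices of $M_g$ realize $\ell_g$, which is the defining property of the minimal set.
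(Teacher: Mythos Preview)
Your proof is correct and follows essentially the same argument as the paper's: pick an arbitrary cube $C$ in $\mess_g$, lift it to $C \times \{v\}$ in $M_g$, and use \lemref{CubeAxis} to conclude $\dim C = \abs{[x,y]} \le \abs{[x,gx]} = \ell_g$. The structure and key steps match the paper's proof exactly.
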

  
  \begin{proof}
    
    Recall that the distance $d(x,y)$ between two vertices is the same as
    the cardinality of $[x,y]$. Thus, for any $x \in \mess_g$, the
    cardinality of $[x,gx]$ is the same as the translation distance
    $\ell_g$. Let $C$ be any cube in $\mess_g$, let $v$ be any vertex
    of $\fix_g$, and let $x$ and $y$ be the minimal and maximal
    vertices of $C \times \set{v}$ in $\mess_g\times \fix_g$. The
    dimension of $C$ is the same as the cardinality of $[x,y]$. By
    \lemref{CubeAxis}, we always have $[x,y] \subseteq [x,gx]$, so the
    dimension of $C$ is bounded by $\ell_g$. This is true for all $C$
    in $\mess_g$, whence the result. \qedhere 
  
  \end{proof}
  
  Our goal now is to relate $\mess_g$ and $\ess_g$. It turns out that
  $\ess_g$ may have infinite dimension. An easy example showing that
  $\mess_g$ and $\ess_g$ can have different dimensions is the glide
  reflection in $\R^2$ defined by 
  \[ 
    g(x,y) = (y+1,x). 
  \] 
  Then $g$ has translation length $1$, so $\mess_g$ is
  $1$--dimensional by \lemref{MFiniteDim}, but $\ess_g = \R^2$. See
  \figref{Dimension}. 
  \begin{figure}[htp!]
  \begin{center}
    \includegraphics{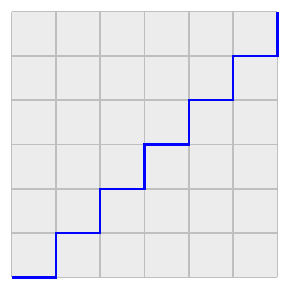}
  \end{center}
  \caption{A glide reflection $g$ with a unique combinatorial axis. We
    have $\dim(\mess_g) = 1$ and $\dim(\ess_g) = 2$. } 
  \label{Fig:Dimension}
  \end{figure}

  This example can be promoted to one in which $\ess_g$ is
  infinite-dimensional. Consider $\R^\Z$ with its standard integer cubing.
  Fix the origin $o=(0,0,\dotsc)$ and consider the subcomplex $X \subset
  \R^\Z$ generated by the vertices in $\R^\Z$ having at most finitely many
  non-zero coordinates. Then $X$ is an infinite-dimensional CAT(0) cube
  complex. Given $x \in X$, let $x_i$ denote its $i$--th coordinate. Let $g
  \from X \to X$ be defined by $g(x)_0 = x_{1} + 1$ and $g(x)_j = x_{j+1}$
  for all other $j$. Again, $g$ has translation length $1$, and $\mess_g$
  is $1$--dimensional, consisting of a single combinatorial axis with
  vertices $\set{g^n(o)}$. 

  Letting $H = [o,go]$, the set of half-spaces $\set{H, gH, \dotsc,
  g^{d-1}H}$ are pairwise transverse, and the $d$--dimensional cube $C$
  they cross in is contained in $\mess_{g^d}$. In particular, since
  $\ell_{g^d} = d$, we see that $\mess_{g^d}$ has dimension exactly $d$.
  Now $\ess_g$ is infinite-dimensional, since $\mess_{g^d} \subset
  \ess_{g^d} = \ess_g$ for all $d>0$. Note that in this example, $g$ has a
  combinatorial axis in $X$, but it has no CAT(0) axis; see \cite[Example
  II.8.28]{BH}. 
 
  The above discussion leads to the next definition.

  \begin{definition} \label{Def:NonTransverse}

    If $g \in G$ is hyperbolic, we say that $\gen{g}$
    acts \emph{non-transversely} on $\ess_g$ if, for every $H \in A_g$,
    $H$ and $gH$ are not transverse in $\ess_g$. Note that this occurs 
    if and only if $H$ and $gH$ are not transverse in $X$, by
    \lemref{Crossing}.

  \end{definition}
  
  Note that while the inclusion $\mess_g \into \ess_g$ induces a bijection
  on their sets of half-spaces, the partial orderings induced by
  inclusion on these two sets may be very different; equality holds if
  and only if $\mess_g = \ess_g$. In the following, we will establish
  some criteria for a square in $\ess_g$ to not be in $\mess_g$.
  
  \begin{lemma} \label{Lem:Transverse}

    Suppose $g \in G$ is hyperbolic. Then:
    \begin{enumerate}[label=\textup{(\arabic*)}]

      \item If $H$ and $gH$ are transverse in $\ess_g$, then any
        square in which they cross cannot be contained in $\mess_g$.
        \label{transverse1}

      \item Let $e$ and $e'$ be edges in $\mess_g$ with dual
        half-spaces $H, H' \in \axis_g^+$. If $e$ and $e'$ bound a
        square in $\ess_g$ which is not contained in $\mess_g$, then either
        $gH = H'$ or $gH' = H$. 
        \label{transverse2}

      \end{enumerate}
    
  \end{lemma}

  \begin{proof}
  
    Let $S$ be a square in $\ess_g$ in which $H$ and $gH$ cross. Let $x$ be
    the minimal vertex of $S$. If $S$ lies in $\mess_g$, then by
    \lemref{CubeAxis}, $H$ and $gH$ are in $[x,gx]$, which is a
    contradiction. This shows \ref{transverse1}. 

    For \ref{transverse2}, let $S$ be the square in $\ess_g$ bounded by $e$
    and $e'$. Let $x$ and $y$ be respectively the minimal and maximal
    vertices of $S$. Let $z$ be the vertex at which $e$ and $e'$ are
    incident. We claim that $z$ is distinct from $x$ and $y$. Since the
    union of $e$ and $e'$ contains three corners of $S$, exactly one corner
    of $S$, the one opposite $z$, is not contained in $\mess_g$. If
    $z=x$, then $y$ must be the corner not in $\mess_g$. But in this case,
    both half-spaces $H$ and $H'$ are contained in $[x,gx]$, by
    \lemref{EdgeAxis}, so $y = m(x,y,gx)$. This means that $y$ lies on
    a combinatorial axis for $g$, so $y$ is in $\mess_g$, a
    contradiction. Similarly, if $z=y$, then $x=m(g^{-1}y,x,y)$, so $x
    \in \mess_g$, again a contradiction. Therefore $x$, $y$, and $z$
    are distinct and lie in $\mess_g$. Now, without loss of
    generality, assume that $[x,z] = H$ and $[z,y]=H'$. Note that $z =
    m(x,z,gx)$ and $y = m(z,y,gz)$. Since $S$ is not contained in
    $\mess_g$, $H' \notin [x,gx]$. On the other hand, $H' \in
    [z,gz]$. Since $[z,gz] = [z,gx] \cup [gx,gz]$, we must have $H' =
    [gx,gz] = gH$. \qedhere 
    
  \end{proof}

  \begin{proposition} \label{Prop:Equality}

    Suppose $g \in G$ is hyperbolic. Then $\mess_g = \ess_g$ if and only
    if\/ $\gen{g}$ acts non-transversely on $\ess_g$.

  \end{proposition}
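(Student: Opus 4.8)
The plan is to work inside $\ess_g$, exploiting the product decomposition $X_g \cong \ess_g \times \ellip_g$ of \lemref{CharProduct} together with $M_g = \mess_g \times \fix_g$. As a preliminary step I would record that, because the combinatorial metric on a product is additive and $g$ acts elliptically on $\ellip_g$, the translation distance of $g$ acting on $\ess_g$ is again $\ell_g$ and $\mess_g = \set{a \in \ess_g \st d(a, ga) = \ell_g}$; moreover $\mess_g$ is a non-empty full subcomplex of $\ess_g$, with fullness inherited from that of $M_g$ in $X$. I will use throughout that $\half(\ess_g) = \axis_g$ and that $\axis_g^+$ is $\gen g$--invariant.

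For the forward implication, I would assume $\mess_g = \ess_g$ but that $\gen g$ acts transversely; then, after replacing $H$ by $\bH$ if necessary, there is $H \in \axis_g^+$ with $H \trans gH$ in $\ess_g = \mess_g$. I would choose a square $S \subseteq \mess_g$ realizing this crossing, with minimal vertex $x$ and maximal vertex $y$, so that $[x,y] = \set{H, gH}$ and in particular $x \notin H$. Picking $v \in \fix_g$ and passing to the cube $S \times \set{v} \subseteq \mess_g \times \fix_g = M_g$, whose minimal and maximal vertices are $(x,v)$ and $(y,v)$, \lemref{CubeAxis} gives $\set{H, gH} = [(x,v),(y,v)] \subseteq [(x,v), g(x,v)] = [(x,v),(gx,v)]$. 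But then $gH \in [(x,v),(gx,v)]$ forces $(gx,v) \in gH$, i.e.\ $x \in H$, contradicting $x \notin H$. Hence $\gen g$ acts non-transversely.

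For the converse, I would assume $\gen g$ acts non-transversely and establish the \emph{edge-extension property}: every edge of $\ess_g$ with one endpoint in $\mess_g$ has its other endpoint in $\mess_g$. Since $\ess_g^{(1)}$ is connected and $\mess_g$ is a non-empty full subcomplex, this yields $\mess_g = \ess_g$. To prove edge-extension, let $(y,y')$ be an oriented edge with $y \in \mess_g$ and let $K$ be its dual half-space (so $y' \in K$, $y \notin K$). Replacing $g$ by $g^{-1}$ if necessary --- this fixes $\mess_g$ and $\ess_g$, preserves non-transversality, and swaps $\axis_g^+$ with $\axis_g^-$ --- I may assume $K \in \axis_g^+$. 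By non-transversality and \remref{TransNest} applied to $g^{-1}K \in \axis_g^+$, we get $K \subsetneq g^{-1}K$. I then claim $gy \in K$: otherwise $y \in \overline{g^{-1}K}$, and since $\partial K \neq \partial(g^{-1}K)$ (as $g$ acts without inversion and $gK \neq K$), the vertex $y'$, which differs from $y$ only across $\partial K$, also lies in $\overline{g^{-1}K}$; but $y' \in K$ as well, which is impossible since $K \subseteq g^{-1}K$. Thus $K \in [y,gy]$, a direct check gives $[y',gy] = [y,gy] \setminus \set{K}$, so $d(y',gy) = \ell_g - 1$, and the triangle inequality gives $d(y',gy') \le d(y',gy) + d(gy,gy') = \ell_g$; hence $d(y',gy') = \ell_g$ and $y' \in \mess_g$, as required.

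The main obstacle, I expect, is the converse direction: isolating the right property to propagate along edges and, within that, the short argument that $gy \in K$, which is precisely where non-transversality is used. The remaining ingredients --- the product bookkeeping in the preliminary step, the appeal to \lemref{CubeAxis} in the forward direction, and the closing distance estimate --- are routine once that is in place. A secondary point demanding care is the consistent identification of half-spaces of $\ess_g$, $X_g$, and $X$, since both \lemref{CubeAxis} and \remref{TransNest} are phrased in terms of the ambient complex.
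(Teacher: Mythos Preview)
Your proof is correct and follows essentially the same plan as the paper: the forward direction is identical (square in $\mess_g$, apply \lemref{CubeAxis}, contradiction), and the converse is the same edge-extension argument closed off by connectedness of $\ess_g^{(1)}$.

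The only point of divergence is the mechanism inside the edge-extension step. The paper argues by contradiction: assuming $y' \notin \mess_g$ forces $H \notin [y,gy]$, so one locates the unique $n>0$ with $H \in [g^n y, g^{n+1} y]$ and uses non-transversality for that $n$ to reach a contradiction. You instead use non-transversality only at $n=1$ to get $K \subset g^{-1}K$, deduce $gy \in K$ directly, and finish with the distance estimate $d(y',gy') \le (\ell_g - 1) + 1$. Your route is a little more economical since it avoids invoking the axis structure to find $n$; the paper's route has the minor advantage of not needing the auxiliary distance computation. Either way the substance is the same.
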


  \begin{proof}
    
    If $H \trans gH$ for some $H \in \axis_g^+$, then, by
    \lemref{Transverse}\ref{transverse1}, any square in $\ess_g$ in which
    they cross cannot lie in $\mess_g$. Therefore, $\mess_g = \ess_g$
    implies that $\gen{g}$ acts non-transversely on $\ess_g$.

    Now suppose $\gen{g}$ acts non-transversely on $\ess_g$. We claim that
    for any edge $e=(x,y)$ in $\ess_g$, if $x \in \mess_g$ then $y \in
    \mess_g$. To see this, let $H = [x,y]$. Replacing $g$ by $g^{-1}$ if
    necessary, we may assume that $H \in \axis_g^+$. If $y \notin \mess_g$,
    then $y \ne m(x,y,gx)$. In particular, $H \notin [x,gx]$. But $H$ must
    be contained in $[g^nx,g^{n+1}x]$ for some $n \in \Z$, and for this $n$
    we have $g^{-n} H \in [x,gx]$. Note that $n > 0$, since $x \not\in H$.
    Because $\gen{g}$ acts non-transversely, $g^{-n}H$ and $H$ cannot be
    transverse, and so $g^{-n} H \supset H$. Since $x \notin g^{-n}H$ and
    $H$ is the only half space separating $x$ and $y$, we must have $y
    \notin g^{-n} H$. But this contradicts the fact that $y \in H$. This
    finishes the proof of the claim. 

    To finish the argument, it suffices to observe that the $1$--skeleton
    of $\ess_g$ is connected, and therefore every vertex of $\ess_g$ is in
    $\mess_g$. \qedhere 

  \end{proof}
  
  \begin{remark} 

    If $\gen{g}$ acts non-transversely on $\ess_g$, then $\mess_{g^n} =
    \mess_g$ for all $n \ge 1$, because $\ess_{g^n} = \ess_g$. The latter
    equality holds because any axis for $g$ is an axis for $g^n$, and 
    therefore $\axis_{g^n} = \axis_g$.  

  \end{remark}

  \begin{corollary} \label{Cor:XFiniteDim}
   
   Suppose $g \in G$ is hyperbolic. The
   following statements are equivalent. 
    \begin{enumerate}[label=\textup{(\arabic*)}]
     \item There is an integer $k > 0$ such that $\gen{g^k}$
     acts non-transversely on $\ess_g$. \label{fdim1}
     \item $\ess_g = \mess_{g^k}$ for some $k > 0$. \label{fdim2}
     \item $\ess_g$ is finite-dimensional. \label{fdim3}
    \end{enumerate}

  \end{corollary}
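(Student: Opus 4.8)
The plan is to prove the cycle of implications \ref{fdim2} $\Rightarrow$ \ref{fdim3} $\Rightarrow$ \ref{fdim1} $\Rightarrow$ \ref{fdim2}, using \propref{Equality}, \lemref{MFiniteDim}, and the observation that $\ess_{g^k} = \ess_g$ for every $k>0$ (since the combinatorial axes, and hence the half-space axis, are the same for $g$ and $g^k$).

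First, \ref{fdim2} $\Rightarrow$ \ref{fdim3} is immediate: if $\ess_g = \mess_{g^k}$ for some $k>0$, then $\ess_g$ is finite-dimensional by \lemref{MFiniteDim} (with dimension bounded by $\ell_{g^k} = k\ell_g$). Next, \ref{fdim1} $\Rightarrow$ \ref{fdim2} is also essentially immediate from \propref{Equality}: applying that proposition to the hyperbolic automorphism $g^k$ and using $\ess_{g^k} = \ess_g$, the non-transversality of $\gen{g^k}$ on $\ess_{g^k}$ gives $\mess_{g^k} = \ess_{g^k} = \ess_g$.

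The substantive implication is \ref{fdim3} $\Rightarrow$ \ref{fdim1}. So suppose $\ess_g$ is finite-dimensional, say $\dim \ess_g = d$. I want to find $k>0$ such that no $H \in \axis_g$ is transverse to $g^k H$ in $\ess_g$. Suppose not: then for every $k>0$ there is some $H_k \in \axis_g^+$ with $H_k \trans g^k H_k$. The idea is to produce a cube of dimension exceeding $d$ inside $\ess_g$, contradicting finite-dimensionality. Pick $k = d$. Having $H \trans g^d H$ for some $H \in \axis_g^+$, I claim that the half-spaces $H, gH, g^2 H, \dots, g^d H$ are pairwise transverse in $\ess_g$. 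Indeed, by \remref{TransNest}, for any $0 \le i < j \le d$, either $g^i H \trans g^j H$ or (applying the remark to the half-space $g^iH \in \axis_g^+$ and the power $g^{j-i}$) $g^iH \supset g^jH$; I need to rule out the latter. If $g^iH \supset g^jH$ for some pair with $j - i \le d$, then applying $g^{d-j}$ and then nesting/transversality arguments along the $\gen{g}$-orbit, one shows $H$ and $g^dH$ would be nested rather than transverse — contradicting the choice of $H$. (More carefully: if $g^iH \supsetneq g^jH$ with $i<j$, then by applying powers of $g$ one gets a strictly descending or ascending chain $\dots \supset g^{-j+i}H \supset H \supset g^{j-i}H \supset \dots$, and since $j-i \le d$, comparing $H$ with $g^dH$ along this chain forces them nested, not transverse.) Hence the $d+1$ half-spaces $H, gH, \dots, g^dH$ are pairwise transverse in $X$, and by \lemref{Crossing} they are pairwise transverse in $\ess_g$; therefore $\ess_g$ contains a cube of dimension $d+1$, contradicting $\dim \ess_g = d$. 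This shows \ref{fdim1} holds with this value of $k$, completing the cycle.

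The main obstacle is the combinatorial bookkeeping in \ref{fdim3} $\Rightarrow$ \ref{fdim1}: one must promote a single transversality $H \trans g^kH$ into a whole transverse family along the orbit, and the cleanest route is to show directly that if $H \in \axis_g^+$ and $H$ is \emph{not} transverse to $g H$, then $H$ is not transverse to $g^m H$ for any $m>0$ (so that the negation of \ref{fdim1} forces $H \trans gH$ for the relevant $H$, after replacing $g$ by a suitable power), and then run the nesting argument of \remref{TransNest} to get pairwise transversality. I expect this monotonicity statement — once $H$ and $gH$ nest, all of $H, gH, g^2H, \dots$ form a monotone chain under inclusion — to follow directly from \remref{TransNest} applied repeatedly, so the corollary reduces, as indicated, to \propref{Equality}, \lemref{MFiniteDim}, \lemref{Crossing}, and this chain observation.
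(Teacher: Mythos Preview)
Your arguments for \ref{fdim1} $\Rightarrow$ \ref{fdim2} and \ref{fdim2} $\Rightarrow$ \ref{fdim3} are correct and agree with the paper. The problem is in \ref{fdim3} $\Rightarrow$ \ref{fdim1}. Your claim that $H \trans g^d H$ forces $\{H, gH, \dotsc, g^d H\}$ to be pairwise transverse is not established by the chain argument you give. From $g^i H \supset g^j H$ you correctly obtain the chain $\cdots \supset H \supset g^{m} H \supset g^{2m} H \supset \cdots$ with $m = j-i$, but $g^d H$ lies on this chain only when $m \mid d$; the inequality $m \le d$ that you invoke does not help. For instance, the glide reflection $g(x,y) = (y+1,x)$ on $\R^2$ has, for $H = H^1_0$, the pattern $H \trans gH$, $H \supset g^2 H$, $H \trans g^3 H$: so $H \trans g^3 H$ holds while $\{H,gH,g^2H,g^3H\}$ is not pairwise transverse. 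Your closing monotonicity observation (if $H \supset gH$ then $H \supset g^m H$ for all $m>0$) is correct but does not close the gap, since in the negation of \ref{fdim1} the half-space $H$ may depend on $k$.

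The paper avoids this by reversing the quantifiers. Instead of fixing $k = d$ and a single $H$, it fixes an arbitrary $H \in \axis_g^+$ and applies the pigeonhole argument to $H, gH, \dotsc, g^d H$: since these $d+1$ half-spaces cannot be pairwise transverse in a $d$--dimensional complex, some pair is nested, giving $H \supset g^{n_H} H$ for some $n_H \in \{1,\dotsc,d\}$ that depends on $H$. Taking $k = d!$ makes $k$ divisible by every possible $n_H$, and the chain $H \supset g^{n_H} H \supset g^{2n_H} H \supset \cdots \supset g^k H$ then gives $H \supset g^k H$ for \emph{every} $H$ simultaneously. This is exactly the step your argument is missing.
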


  \begin{proof}
   
    First we show that \ref{fdim1} $\Longrightarrow$ \ref{fdim2}. Suppose
    that $g^k$ acts non-transversely on $\ess_g$. Since $\ess_g =
    \ess_{g^k}$, $g^k$ also acts non-transversely on $\ess_{g^k}$. By
    \propref{Equality}, $\mess_{g^k}=\ess_{g^k}$. 

    The implication \ref{fdim2} $\Longrightarrow$ \ref{fdim3} follows from
    \lemref{MFiniteDim}. 

    Now we show that \ref{fdim3} $\Longrightarrow$ \ref{fdim1}. Suppose
    $\ess_g$ has dimension $d$. For every $H \in \axis_g^+$, we claim
    that $H \supset g^n H$ for some $n$ satisfying $0 < n \le d$. Since
    $\ess_g$ has dimension $d$, the half-spaces 
    \[
      H, gH, g^2H, \dotsc, g^d H
    \]
    cannot all be pairwise transverse. Thus there 
    exist $i, j$ with $0 \leq i < j \leq d$ such that $g^i H \supset
    g^j H$, or equivalently, $H \supset g^{j-i} H$. Finally, taking
    $k=d!$, we have that $n \mid k$ whenever $0 < n
    \leq d$, and therefore $H \supset g^kH$ for all $H \in
    \axis_g^+$. Thus $\gen{g^k}$ acts non-transversely on
    $\ess_g$. \qedhere 

  \end{proof}
  
  Even though $\mess_g$ is not always a convex subspace of $X$, it is still
  always CAT(0), which we show next. The proof will make use of
  \lemref{Transverse}\ref{transverse2}. 
  
  \begin{proposition} \label{Prop:CAT0}
  
    Suppose $g \in G$ is hyperbolic. Then the cube complex $\mess_g$ is
    CAT(0). 

  \end{proposition}
   
  \begin{proof}
     
     Denote by $\half$ the set of half-spaces $\axis_g$ with partial order
     induced by $\mess_g$. That is: $H, H' \in \half$ are incomparable (or
     transverse) if and only if there is a square in $\mess_g$ in which
     they cross, and $H \geq H'$ if and only if $H \cap \mess_g \supseteq
     H' \cap \mess_g$. Apply the Sageev construction to $\half$ to obtain a
     CAT(0) cube complex $X(\half)$. There is a natural injective map $f
     \from \mess_g \to X(\half)$ defined by sending every vertex in
     $\mess_g$ to its associated principal ultrafilter on $\half$. This map
     identifies $\half$ with the set of half-spaces of $X(\half)$. Let $Y$
     be the image of $\mess_g$. We now proceed to show that $Y= X(\half)$,
     which will imply that $\mess_g$ is CAT(0). 
 
     We claim that for any edge $e=(y,y')$ in $X(\half)$, if $y \in Y$ then
     $y' \in Y$. The result follows, since the $1$--skeleton of $X(\half)$
     is connected. To prove the claim, let $H \in \half$ be the half-space
     dual to $e$. Replacing $g$ by $g^{-1}$ if necessary, we may assume
     that $H \in \half^+$. Let $x \in \mess_g$ be such that $f(x) = y$.
     Since $H$ must appear in every axis of $g$ passing through $x$ and $x
     \notin H$, there exists a geodesic path $x=x_0,\ldots,x_{n+1}=x'$ in
     $\mess_g$ such that $H = [x_n,x_{n+1}]$. If $n=0$, then $f(x_1) = y'$
     and we are done. Now suppose that $n > 0$. Let $H_i = [x_i,x_{i+1}]$
     for each $i$. If $H_i \supset H$ for some $i<n$, then since $x \notin
     H_i$, $f(x)=y \notin H_i$. But $y' \in H$, and hence $y' \in H_i$, but
     this is impossible as $y$ and $y'$ are separated by exactly one
     half-space $H$. Thus $H_i \trans H$ for $i=0,\ldots,n-1$. We claim now
     that for each $i$, there is a square $S_i$ in which $H_i$ and $H$
     cross, and $x_i$ is the minimal vertex of $S_i$. Let $e_i =
     (x_i,x_{i+1})$ for each $i$. By assumption, $e_{n-1}$ and $e_n$ are
     dual to two transverse half-space $H_{n-1}$ and $H_n$. If they do not
     bound a square in $\mess_g$, then by \lemref{Transverse}, $gH_{n-1} =
     H_n$ and there is no square in $\mess_g$ in which $H_{n-1}$ and $H_n$
     can cross. This contradicts that $H_{n-1}$ and $H_n$ are transverse in
     $\mess_g$, so $e_{n-1}$ and $e_n$ must bound a square $S_{n-1}$ in
     $\mess_g$. By the same reasoning, the edge $e'$ parallel to $e_n$ in
     $S_{n-1}$ and $e_{n-2}$ bound $S_{n-2}$, in which $x_{n-2}$ is the
     minimal vertex. Repeating in this way, we find the square $S_0$, with
     minimal vertex $x_0$ and in which $H_0$ and $H$ cross. In $S_0$ there
     is an edge $(x_0,v)$ dual to $H$, and $f(v) = y'$. \qedhere 
    
  \end{proof}

  The following proposition will be used in the next section.

  \begin{proposition} \label{Prop:Descending}

      Suppose $g \in G$ is hyperbolic, and that
      $\gen{g}$ acts non-transversely on $\ess_g$. Let $C$
      be a cube in $\ess_g$ of maximal dimension, $A$ the set of
      half-spaces in $\axis_g^+$ dual to $C$, and $o$ the
      minimal vertex of\/ $C$. Then the following statements hold. 

      \begin{enumerate}[label=\textup{(\arabic*)}]

        \item For every pair of half-spaces $H, H' \in A$, either $H \trans
        gH'$ or $H \supset gH'$. \label{desc1}

        \item $K \in [o,go]$ if and only if there exist $H, H' \in A$ such
        that $H \supseteq K \supset g H'$. \label{desc2}

        \item For every $K \in \axis_g^+$, there exist $r,s \in \Z$ and $H,
        H' \in A$ such that \label{desc3} $g^r H \supset K \supset g^s
        H'$.  

      \end{enumerate}
    
  \end{proposition}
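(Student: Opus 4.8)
The plan is to work throughout inside $\ess_g$. Since $\gen{g}$ acts non-transversely, $\ess_g = \mess_g$ by \propref{Equality}, so $\half(\ess_g) = \axis_g = \axis_g^+ \sqcup \axis_g^-$, and $d := \dim C = \dim \ess_g \le \ell_g$ by \lemref{MFiniteDim}. Combining \remref{TransNest} with non-transversality gives the basic fact, used repeatedly below, that $H \supset g^n H$ for every $H \in \axis_g^+$ and every $n > 0$. Statement (1) is then quick: if $H = H'$ then $H \supset gH = gH'$; and if $H \ne H'$ then $H \trans H'$ (distinct elements of $A$ are pairwise transverse) while $gH' \subset H'$, so the alternative $H \subset gH'$ would force $H \subset H'$, contradicting \remref{TransNest}; only $H \trans gH'$ or $H \supset gH'$ remain.

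The geometric core of the argument is a single combinatorial axis. By \lemref{CubeAxis} there is a combinatorial axis $L$ of $g$ passing through both $o$ and $y$; being $\gen{g}$--invariant, $L$ also passes through every $g^n o$, which occupies ``position'' $n\ell_g$ along $L$. Since $A = [o,y] \subseteq \axis_g^+$ and $d(o,y) = d \le \ell_g = d(o,go)$, the vertex $y$ lies on the subsegment of $L$ from $o$ to $go$, and therefore
\[
  [o,go] \ = \ [o,y] \sqcup [y,go] \ = \ A \sqcup [y,go].
\]
I will also use that the subsegment of $L$ between two of its vertices is a geodesic and hence lies in their convex hull; in particular the subsegment from $o$ to $y$ lies in $C$, so its $g^n$--translate lies in $g^n C$, and any hyperplane crossing that translate is dual to $g^n C$.

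For statement (2), first let $K \in [o,go] = A \sqcup [y,go]$. If $K \in A$, then $H = H' = K$ works (using $K \supset gK$). If $K \in [y,go]$, then $K \in \axis_g^+$ with $go \in K$, $y \notin K$, and $K \notin A \cup gA$. If $K$ were transverse to every element of $A$, then $A \cup \{K\}$ would be a pairwise transverse family of size $d+1$ in $\ess_g$ (by \lemref{Crossing}), forcing a cube of dimension $d+1$ in $\ess_g$, which is impossible; hence $K$ is nested with some $H \in A$, and $H \subset K$ is excluded because $y \in H$ while $y \notin K$, so $K \subset H$. The same argument applied to the pairwise transverse family $gA$ yields $H' \in A$ with $K$ nested to $gH'$, and $K \subset gH'$ is excluded since it would give $go \in K \subset gH'$ and hence $o \in H'$; so $gH' \subset K$. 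Conversely, suppose $H \supseteq K \supset gH'$ with $H, H' \in A$. Then $K \in \axis_g^+$: otherwise $\overline{K} \in \axis_g^+$ and $\overline{K} \subset \overline{gH'}$, so the distinct half-spaces $\overline{K}$ and $gH'$ of $\axis_g^+$ are disjoint, contradicting \remref{TransNest}. Also $o \notin K$ since $o \notin H \supseteq K$. Let $q$ be the point where $\partial K$ crosses $L$. Since $o \notin K$, the point $q$ lies beyond $o$ on $L$. If $q$ were to lie beyond $go$, then because $gH' \subset K$ the crossing of $\partial(gH')$ lies still further along $L$, and that crossing lies between $go$ and $gy$ (being the $g$--translate of the crossing of $\partial H'$, which lies between $o$ and $y$); hence $q$ itself would lie strictly between $go$ and $gy$, so $\partial K$ would be dual to an edge of the subsegment of $L$ from $go$ to $gy$, which lies in $gC$. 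Then $K \in [go,gy] = gA$, say $K = gH''$ with $H'' \in A$, and $gH' \subset K = gH''$ would give $H' \subset H''$, impossible in the pairwise transverse set $A$. Therefore $q$ lies between $o$ and $go$, so $K \in [o,go]$.

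For statement (3), let $K \in \axis_g^+$. Then $\partial K$ crosses $L$ exactly once, at a point lying strictly between $g^n o$ and $g^{n+1} o$ for a unique $n \in \Z$, so $g^{-n} K \in [o,go]$. By (2) there exist $H, H' \in A$ with $H \supseteq g^{-n}K \supset gH'$, and applying $g^n$ gives $g^n H \supseteq K \supset g^{n+1} H'$, so $s = n+1$ works for the right-hand inclusion. For the left-hand inclusion, take $r = n$ if $g^n H \supsetneq K$; otherwise $g^n H = K$, and then $g^{n-1}H = g^{-1}K \supset K$, so $r = n-1$ works. The step I expect to be the main obstacle is the converse direction of (2) --- ruling out the possibility that $\partial K$ crosses $L$ beyond $go$. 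This is precisely where the maximality of $C$ (through the dimension bound and the pairwise transversality of $A$) and the convexity of cubes are both essential, together with careful bookkeeping of the crossing positions along $L$.
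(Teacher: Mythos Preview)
Your proof is correct and follows essentially the same strategy as the paper's: work inside $\ess_g = \mess_g$, use a combinatorial axis $L$ through $o$ supplied by \lemref{CubeAxis}, and exploit the maximality of $C$ via the dimension bound. Your argument for \ref{desc1} is a minor variant (you use $gH' \subset H'$ together with $H \trans H'$ to exclude $H \subset gH'$, whereas the paper observes that the maximal vertex $o^+$ lies in $H \setminus gH'$); both are short and valid. For \ref{desc3} your argument is identical to the paper's.

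The one place where your write-up genuinely adds content is the converse direction of \ref{desc2}. The paper simply asserts that $H \supseteq K \supset gH'$ implies $go \in K$, without further comment. You supply the missing justification: after showing $K \in \axis_g^+$ and $o \notin K$, you track the crossing point of $\partial K$ along $L$ and show that if it lay beyond $go$ it would have to fall on the segment of $L$ inside $gC$, forcing $K \in gA$ and hence $H' \subsetneq H''$ for two elements of the pairwise-transverse set $A$. This is exactly the kind of careful bookkeeping the paper's terse sentence elides, and your concern that this step is ``the main obstacle'' is well placed---it is the only point where something beyond a one-line observation is needed.
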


  \begin{proof}

     Since $\ess_g = \mess_g$ (by \propref{Equality}), there is an
     axis $L$ for $g$ containing $o$. It follows that $o \not\in gH'$, 
     because $o \not\in H'$ (the unique edge $e$ in $L$ dual to $H'$
     separates $o$ from $ge$). Let $o^+$ be the maximal vertex of
     $C$. By \lemref{CubeAxis}, $o^+$ is on a geodesic from $o$ to $g
     o$. Suppose $H$ and $gH'$ are not transverse. Then $o^+ \notin
     gH'$, because all half-spaces in $[o, o^+]$ are transverse. Now
     $o^+ \in H - gH'$, showing that $H \not\subset gH'$. Thus $H
     \supset gH'$ and \ref{desc1} holds. 

     For statement \ref{desc2}, note that $A \subseteq [o,g o]$,
     again by \lemref{CubeAxis}. If $H \supseteq K \supset g H'$ for  
     some $H, H' \in A$, then $o \notin K$ and $g o \in K$, and
     therefore $K \in [o,g o]$. On the other hand, both $C$ and $g C$
     have maximal dimension, so for every $K \in [o,g o]$, there exist
     $H, H' \in A$ such that $H$ and $gH'$ are comparable (or equal)
     to $K$. Because $A \subset [o,go]$ and $gA \cap [o,go] =
     \emptyset$, we must have $H \supseteq K \supset g H'$. 
     
     Finally, for \ref{desc3}, we observe that 
     \[ 
       \axis_g^+ = \bigcup_{n \in \Z} [g^{n}o,g^{n+1}o].
     \] 
     Suppose $K \in [g^{n}o,g^{n+1}o]$. Applying \ref{desc2} to
     $g^{-n}K$, there exist $H, H' \in A$ such that $g^{n} H \supseteq
     K \supset g^{n+1} H'$. Since $\gen{g}$ acts non-transversely, we
     also have $g^{n-1} H \supset g^{n} H$. The conclusion follows.
     \qedhere

  \end{proof}

\section{Non-transverse actions and efficient quasimorphisms} 

  \label{Sec:Non-transverse}

  Here we give a general construction of a large family of quasimorphisms
  on groups acting on CAT(0) cube complexes. For the construction to
  succeed (i.e.\ to achieve bounded defect) we require one assumption. 

  \begin{definition} \label{Def:NonTransverse2}
    Let $X$ be a CAT(0) cube complex with an action by $G$. The action is
    \emph{non-transverse} if it is without inversion and also satisfies: 
    there do not exist $H \in \half(X)$, $g \in G$ with $H \trans gH$.
    
    This definition agrees with the earlier \defref{NonTransverse} in
    the case of $\gen{g}$ acting on $\ess_g$. First, such an action is
    always without inversion. Also, if $H \in \axis_g$ and $H$ and
    $gH$ are not transverse, then $H$ and $gH$ are nested by
    \remref{TransNest}; hence $H$ and $g^k H$ are not transverse for
    any $k$. 
    
  \end{definition}
 
  Let $X$ be a CAT(0) cube complex with a non-transverse action by $G$. Let
  $\g$ be a segment in $X$, and consider the set $G \gamma = \set{ g
    \gamma \st g \in G}$; elements of this set are called \emph{copies}
  of $\gamma$. Define the map $c_\g \from X^2 \to \Z$ which assigns to
  each pair $(x,y)$ the maximal cardinality of a pairwise non-overlapping
  collection of copies of $\gamma$ in $[x,y]$. 
  
  Define 
  \begin{equation} \label{omegadef}
    \omega_\g(x,y) = c_\g(x,y)-c_{\bg}(x,y).
  \end{equation}
  Observe that $\omega_\g(y,x) = -\omega_\g(x,y) $ and $\omega_\g(gx,gy)
  = \omega_\g(x,y)$ for all $g \in G$. 

  \begin{lemma} \label{Lem:Juncture}

    If the action is non-transverse, then for all $x,y,z \in X$ with $y =
    m(x,y,z)$, there is a bound 
    \[ 
      \abs{ \omega_\g(x,z) - \omega_\g(x,y) - \omega_\g(y,z)} \le 2.
    \] 

  \end{lemma}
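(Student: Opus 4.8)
The plan is to analyze the three intervals involved in the median identity. Since $y = m(x,y,z)$, we have the disjoint union $[x,z] = [x,y] \sqcup [y,z]$ of sets of half-spaces. First I would fix a maximal pairwise non-overlapping collection $\Gamma$ of copies of $\g$ inside $[x,z]$, so $|\Gamma| = c_\g(x,z)$. Each copy $\delta \in \Gamma$ is a segment contained in $[x,z]$; I want to split $\Gamma$ according to whether a given copy lies (mostly) in $[x,y]$ or in $[y,z]$. The natural move: say $\delta$ is a \emph{left} copy if all its half-spaces lie in $[x,y]$, a \emph{right} copy if they all lie in $[y,z]$, and a \emph{straddling} copy otherwise. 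Left copies form a non-overlapping collection inside $[x,y]$, hence there are at most $c_\g(x,y)$ of them; similarly right copies contribute at most $c_\g(y,z)$. So the discrepancy $c_\g(x,z) - c_\g(x,y) - c_\g(y,z)$ is controlled once I bound the number of straddling copies --- I would aim to show there is at most one straddling copy of $\g$ in any non-overlapping collection inside $[x,z]$.

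The key point for the "at most one straddling copy'' claim is the following. A straddling copy $\delta$ contains some half-space $H \in [x,y]$ and some half-space $H' \in [y,z]$. I claim every half-space in $[x,y]$ is nested with (in fact contains, in the appropriate direction) every half-space in $[y,z]$: indeed if $H \in [x,y]$ and $H' \in [y,z]$ then $y \in H \cap H'$, $x \in \bH$, $z \in \bH'$, so to prove nestedness I need to rule out transversality, i.e.\ rule out $\bH \cap H' \ne \emptyset$ together with the other three quadrants being nonempty --- and transversality $H \trans H'$ would force a square crossed by both $\partial H$ and $\partial H'$; but actually the cleaner route is: any two half-spaces in $[x,z]$ are either nested or transverse (stated in the Intervals subsection), and $H \trans H'$ with $H \in [x,y]$, $H' \in [y,z]$ is impossible because $y$ lies in $H \cap H'$ while $y = m(x,y,z)$ sits on geodesics realizing both intervals --- I'd verify that $\bH \cap H' = \emptyset$ using that $H \in [x,y]$ means $H$ separates $x$ from $y$ only "on the $x$ side,'' so no vertex of $[y,z]$ is in $\bH$. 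Granting this, for a straddling copy $\delta$, write $\delta = (D_1 \supset \cdots \supset D_n)$; since the half-spaces of $[x,y]$ all dominate those of $[y,z]$, the ones of $\delta$ lying in $[x,y]$ form an initial sub-chain and those in $[y,z]$ form a terminal sub-chain, so $\delta$ "crosses the cut'' exactly once, at one tightly-nested pair $D_k \supset D_{k+1}$ with $D_k \in [x,y]$, $D_{k+1} \in [y,z]$. Two distinct straddling copies $\delta, \delta'$ in a non-overlapping collection are nested (by \lemref{Non-Over}), say $\delta > \delta'$, meaning every half-space of $\delta$ contains every half-space of $\delta'$; but then $\delta$'s terminal half-space $D_{k+1} \in [y,z]$ contains $\delta'$'s initial half-space $D'_1 \in [x,y]$, forcing a half-space of $[y,z]$ to contain one of $[x,y]$ --- contradicting the domination, unless they are equal, which is excluded. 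Hence at most one straddling copy.

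Combining: $c_\g(x,z) \le c_\g(x,y) + c_\g(y,z) + 1$, and conversely any non-overlapping collections realizing $c_\g(x,y)$ and $c_\g(y,z)$ can be concatenated (their union stays non-overlapping since left and right copies are nested and don't share half-spaces or transverse pairs --- again by the domination property) to give $c_\g(x,z) \ge c_\g(x,y) + c_\g(y,z)$. Thus $|c_\g(x,z) - c_\g(x,y) - c_\g(y,z)| \le 1$, and the same for $\bg$. Adding the two estimates with signs as in the definition of $\omega_\g$ yields the bound of $2$.

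\textbf{Main obstacle.} The delicate step is the domination claim --- that every half-space in $[x,y]$ contains every half-space in $[y,z]$ when $y = m(x,y,z)$ --- and, relatedly, ruling out transversality between such half-spaces; this is where non-transversality of the action might have to be invoked, or more likely just the median/interval combinatorics. Everything else (splitting the collection, the concatenation giving the lower bound, passing from $c_\g$ to $\omega_\g$) is bookkeeping once that structural fact is in hand.
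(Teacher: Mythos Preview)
Your upper bound argument (at most one straddling copy, so $c_\g(x,z) \le c_\g(x,y) + c_\g(y,z) + 1$) is essentially the paper's, and is fine once one notes that within any \emph{chain} in $[x,z]$ the half-spaces from $[x,y]$ must precede those from $[y,z]$: if $D \supset D'$ with $D \in [y,z]$ and $D' \in [x,y]$ then $y \in D' \subset D$ while $y \notin D$, a contradiction.

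The gap is in the other direction. Your ``domination'' claim---that every half-space in $[x,y]$ contains every half-space in $[y,z]$---is false. (Already the assertion ``$y \in H \cap H'$'' is wrong: for $H' \in [y,z]$ one has $y \notin H'$.) In $\R^2$ with $x=(0,0)$, $y=(1,1)$, $z=(2,2)$, the half-space $\{x_1 \ge 1/2\} \in [x,y]$ is transverse to $\{x_2 \ge 3/2\} \in [y,z]$. Consequently, copies of $\g$ lying entirely in $[x,y]$ and entirely in $[y,z]$ \emph{can} overlap, via transverse half-spaces, and your concatenation yielding $c_\g(x,z) \ge c_\g(x,y) + c_\g(y,z)$ fails. (Indeed, the paper notes that this stronger inequality holds only when $X$ is a tree.)

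This is exactly where the non-transverse hypothesis enters, and it is the step you have not supplied. The paper takes maximal non-overlapping collections $\{g_1\g,\dots,g_k\g\}$ in $[x,y]$ and $\{g_{k+1}\g,\dots,g_{k+\ell}\g\}$ in $[y,z]$, each linearly ordered, and shows that $g_i\g$ and $g_j\g$ with $i \le k < j$ can overlap only when $i=k$ and $j=k+1$: if $g_i H \trans g_j H'$ with $i<k$, then since $g_i H \supset g_k H'$ and $y \in g_k H' \setminus g_j H'$ one gets $g_k H' \trans g_j H'$, violating non-transversality; similarly for $j > k+1$. Discarding $g_k\g$ then gives $c_\g(x,z) \ge c_\g(x,y)+c_\g(y,z)-1$, which together with the upper bound yields $\abs{c_\g(x,z)-c_\g(x,y)-c_\g(y,z)} \le 1$ and hence the stated bound of $2$ for $\omega_\g$.
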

  
  \begin{proof}
    
    By definition, 
    \[ 
      \abs{ \omega_\g(x,z) - \omega_\g(x,y) - \omega_\g(y,z)} = \abs{
      \Big( c_\g(x,z) - c_\g(x,y) - c_\g(y,z) \Big) - \Big( c_{\bg}(x,z) -
      c_{\bg}(x,y) - c_{\bg}(y,z) \Big) }.
    \] 
    It will suffice to show that 
    \begin{gather}
      c_\g(x, z) \leq c_\g(x,y) + c_\g(y,z) + 1 \label{eqn1}\\
      \intertext{and} 
      c_\g(x, y) + c_\g(y, z) \leq c_\g(x,z) + 1, \label{eqn2}
    \end{gather}
    together with analogous statements for $\bg$. 

    Let $\set{g_1 \g, \dotsc, g_n \g}$ be a collection
    of non-overlapping copies of $\g$ in $[x,z]$ of cardinality $n =
    c_\g(x,z)$. By \lemref{Non-Over} these copies are pairwise
    nested, and hence up to re-indexing we can assume that 
    \begin{equation}\label{eqn3}
      g_1 \g > \dotsm > g_n \g.
    \end{equation}
    If $g_k\g \not\in [x,y] \cup [y,z]$ for some $k$, then $y$ 
    separates two half-spaces in $g_k \g$. It follows from
    \eqref{eqn3} that $g_i \g \subseteq [x,y]$ for every $i < k$ and $g_i
    \g \subseteq [y,z]$ for every $i > k$. Thus $c_\g(x,y) + c_\g (y ,z)
    \geq n - 1$, proving \eqref{eqn1}. 

    Now let $k = c_\g(x,y)$ and $\ell = c_\g(y,z)$. Let $A = \set{g_1\g,
      \dotsc, g_k\g}$ be a non-overlapping collection of copies of $\g$
    in $[x,y]$ and $B = \set{g_{k+1}\g, \dotsc, g_{k+\ell}\g}$ a
    non-overlapping collection of copies in $[y,z]$. As above, by
    \lemref{Non-Over}, we may re-index $A$ and $B$ to arrange that
    \[
      g_1\g > \dotsm > g_k\g \ \ \text{ and } \ \ g_{k+1}\g > \dotsm >
      g_{k+\ell}\g. 
    \]
    We claim that $g_i\g$ and $g_j\g$ (with $i < j$) cannot overlap unless
    $i=k$ and $j=k+1$. Discarding $g_k\g$, then obtain a
    non-overlapping collection in $[x,z]$ of cardinality $k + \ell -
    1$, proving \eqref{eqn2}. 

    To prove the claim, suppose that $g_i\g \in A$ and $g_j\g \in B$
    overlap. Then there are half-spaces $H, H' \in \g$ such that $g_iH
    \trans g_jH'$ in $X$. If $i < k$ then $g_k H' \trans g_jH'$, because
    $g_iH \supset g_k H'$ and $y \in g_kH' - g_j H'$. However, this
    contradicts the assumption of a non-transverse action. Hence $i =
    k$. Similarly, if $j > k+1$ then $g_i H \trans g_{k+1} H$ because
    $g_{k+1}H \supset g_j H'$ and $y \in g_i H - g_{k+1} H$. Again,
    this contradicts non-transversality of the action, and therefore $j =
    k+1$. This proves the claim, and equation \eqref{eqn2}. Finally, note
    that the analogues of \eqref{eqn1} and \eqref{eqn2} for $\bg$ are
    entirely similar. \qedhere 
  
  \end{proof}

  Next define $\delta \omega_\g(x,y,z) = \omega_\g(x,y) + \omega_\g(y,z)
  + \omega_\g(z,x)$. 
  
  \begin{lemma} \label{Lem:Coboundary} 

    If the action is non-transverse, then for all $x,y,z \in X$ there is
    a bound $\abs{\delta \omega_\g(x,y,z)} \leq 6$. 
    
  \end{lemma}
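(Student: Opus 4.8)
The plan is to deduce this bound from \lemref{Juncture} by inserting the median $m = m(x,y,z)$ into each of the three terms of $\delta\omega_\g(x,y,z) = \omega_\g(x,y) + \omega_\g(y,z) + \omega_\g(z,x)$. First I would recall the two identities noted immediately after the definition of $\omega_\g$ in \eqref{omegadef}: it is $G$--invariant and antisymmetric, $\omega_\g(b,a) = -\omega_\g(a,b)$; only antisymmetry is needed here.

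Next I would observe that $m$ lies on a geodesic edge path between each pair drawn from $\set{x,y,z}$. Indeed, from the defining property $[x,y] = [x,m] \cup [m,y]$ of the median (the two pieces being disjoint, since no half-space can both contain and omit $m$) one gets $d(x,y) = d(x,m) + d(m,y)$, so $m$ is on a geodesic from $x$ to $y$; by the characterization recorded in the ``Medians'' subsection this is equivalent to $m = m(x,m,y)$, and likewise $m = m(y,m,z)$ and $m = m(z,m,x)$. This is exactly the hypothesis needed to apply \lemref{Juncture} to the triples $(x,m,y)$, $(y,m,z)$, and $(z,m,x)$, which yields
\[
  \abs{\omega_\g(x,y) - \omega_\g(x,m) - \omega_\g(m,y)} \le 2
\]
together with the two analogous bounds.

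Finally I would add the three inequalities. Upon replacing each summand of $\delta\omega_\g(x,y,z)$ by its median split, the six ``half-step'' contributions regroup into the three pairs $\omega_\g(x,m) + \omega_\g(m,x)$, $\omega_\g(m,y) + \omega_\g(y,m)$, and $\omega_\g(m,z) + \omega_\g(z,m)$, each of which vanishes by antisymmetry; hence the leading term cancels and $\abs{\delta\omega_\g(x,y,z)} \le 2 + 2 + 2 = 6$. There is no genuine obstacle in this argument: all of the real work, including the only use of the non-transversality hypothesis, is already packaged inside \lemref{Juncture}. The one point worth a line of justification is the claim that $m(x,y,z)$ sits on a geodesic between each pair of the three vertices, which is the median identity combined with additivity of distance along $[x,m]\sqcup[m,y]$; the hypothesis that the action is non-transverse enters here only insofar as it licenses the appeal to \lemref{Juncture}.
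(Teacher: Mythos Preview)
Your proof is correct and is essentially identical to the paper's: both insert the median $m=m(x,y,z)$, apply \lemref{Juncture} to each of the three pairs, and let the resulting six ``half-step'' terms cancel by antisymmetry. Your version spells out more carefully why $m$ satisfies the hypothesis of \lemref{Juncture} for each pair, which the paper leaves implicit.
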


  \begin{proof}
    
    Let $m= m(x,y,z)$. By the previous lemma, $\abs{ \omega_\g(a, b) -
    \omega_\g(a,m) - \omega_\g(m, b)}\leq 2$, where $a,b \in \set{x,y,z}$
    are distinct. Then 
    \begin{align*}
    \abs{ \delta \omega_\g(x, y,z) }
    & =  \left\lvert \omega_\g(x, y) + \omega_\g(y, z) + \omega_\g(z,x) \right. \\
    & \quad + \left. \omega_\g(x, m) - \omega_\g(x,m) + \omega_\g(y, m)
    - \omega_\g(y, m) + \omega_\g(z,
    m)- \omega_\g(z, m) \right\rvert  \\
    & \leq  \abs{ \omega_\g(x, y) - \omega_\g(x,m)- \omega_\g(m,y) }
    + \abs{ \omega_\g(y, z) - \omega_\g(y,m)- \omega_\g(m,z)} \\
    & \quad + \abs{ \omega_\g(z, x) -\omega_\g(z,m)- \omega_\g(m,x) } \\ 
    & \leq 6. \qedhere
    \end{align*}

  \end{proof}

  At this point we are ready to define quasimorphisms associated to
  $\g$. We will define two functions, $\psi_\g$ and $\varphi_\g$, 
  which produce the \emph{same} homogeneous quasimorphism
  $\widehat{\psi}_\g = \widehat{\varphi}_\g$. The second function
  $\varphi_\g$ has the definition we want to use, but $\psi_\g$ is needed
  to establish the bound on defect. 

  Fix a base vertex $\O \in X$ and define $\psi_\g \from G
  \to \R$ by 
  \begin{equation} \label{psidef}
    \psi_\g(g) = \omega_\g( \O, g\O). 
  \end{equation}

  Next, for each $g\in G$ choose a vertex $x_g \in X_g$. Define 
  $\varphi_\g \from G \to \R$ by 
  \begin{equation} \label{phidef}
    \varphi_\g(g) = \omega_\g(x_g, gx_g).
  \end{equation}

  \begin{lemma}

    If the action is non-transverse, then $\psi_\g$ is a quasimorphism of
    defect at most\/ $6$. 

  \end{lemma}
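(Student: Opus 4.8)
The plan is to show that $\psi_\g$ is a quasimorphism by relating the ``defect cocycle'' $\psi_\g(gh) - \psi_\g(g) - \psi_\g(h)$ to the coboundary quantity $\delta\omega_\g$ controlled by \lemref{Coboundary}. Recall that $\psi_\g(g) = \omega_\g(\O, g\O)$, and that $\omega_\g$ is antisymmetric and $G$--invariant: $\omega_\g(y,x) = -\omega_\g(x,y)$ and $\omega_\g(ax, ay) = \omega_\g(x,y)$ for all $a \in G$.

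First I would expand, using these two properties, the defect expression evaluated on a pair $g, h \in G$. Write $x = \O$, $y = g\O$, $z = gh\O$. Then $\psi_\g(gh) = \omega_\g(x, z)$, $\psi_\g(g) = \omega_\g(x, y)$, and $\psi_\g(h) = \omega_\g(\O, h\O) = \omega_\g(g\O, gh\O) = \omega_\g(y, z)$ by $G$--invariance (applied with $a = g$). Therefore
\[
  \psi_\g(gh) - \psi_\g(g) - \psi_\g(h) = \omega_\g(x,z) - \omega_\g(x,y) - \omega_\g(y,z).
\]
Now using antisymmetry, $-\omega_\g(x,z) = \omega_\g(z,x)$, so this equals $-\bigl(\omega_\g(x,y) + \omega_\g(y,z) + \omega_\g(z,x)\bigr) = -\delta\omega_\g(x,y,z)$.

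By \lemref{Coboundary}, since the action is non-transverse, $\abs{\delta\omega_\g(x,y,z)} \le 6$ for all triples of vertices, hence $\abs{\psi_\g(gh) - \psi_\g(g) - \psi_\g(h)} \le 6$. This shows $\psi_\g$ is a quasimorphism of defect at most $6$, which is exactly the claim.

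The one point requiring a little care is the identity $\psi_\g(h) = \omega_\g(y, z)$: this is where $G$--invariance of $\omega_\g$ is used in an essential way, replacing the pair $(\O, h\O)$ by its $g$--translate $(g\O, gh\O) = (y, z)$. Everything else is a direct bookkeeping reduction to \lemref{Coboundary}, so there is no genuine obstacle here — the work was already done in establishing the bound on $\delta\omega_\g$.
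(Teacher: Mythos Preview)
Your proof is correct and essentially identical to the paper's: both expand the defect, use antisymmetry and $G$--invariance of $\omega_\g$ to identify it with $\pm\delta\omega_\g$ on the triple $(\O, g\O, gh\O)$, and then invoke \lemref{Coboundary}. The only cosmetic difference is the order in which antisymmetry and $G$--invariance are applied.
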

  
  \begin{proof}

    For any $g_1, g_2 \in G$ we have 
    \begin{align*}
    \abs{ \psi_\g(g_1g_2) - \psi_\g(g_1)-\psi_\g(g_2)}
    & = \abs{ \omega_\g(\O, g_1g_2 \O) - \omega_\g(\O ,g_1 \O)-
    \omega_\g(\O, g_2\O)}\\
    &= \abs{\omega_\g(\O, g_1g_2 \O) + \omega_\g(g_1\O, \O) + \omega_\g(g_2
    \O, \O)}\\
    &= \abs{ \omega_\g(\O, g_1g_2\O) + \omega_\g(g_1\O, \O)+
    \omega_\g(g_1g_2\O,g_1\O) }\\
    &= \abs{ \delta \omega_\g(\O, g_1g_2\O, g_1\O)} \\
    & \leq 6,
    \end{align*} 
    by \lemref{Coboundary}. \qedhere

  \end{proof}

  \begin{lemma} \label{Lem:Defect} 
   
    If the action is non-transverse, then $\psi_\g-\varphi_\g$ is
    uniformly bounded. Hence $\varphi_\g$ is a quasimorphism,
    $\widehat{\varphi}_\g = \widehat{\psi}_\g$, and
    $\widehat{\varphi}_\g$ has defect at most\/ $12$. 

  \end{lemma}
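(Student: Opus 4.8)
The plan is to reduce everything to a single uniform bound: namely, that $\abs{\psi_\g(g) - \varphi_\g(g)} \le 12$ for every $g \in G$. Once this is in hand, the rest of the lemma is formal: a map uniformly close to a quasimorphism is a quasimorphism (so $\varphi_\g$ is a quasimorphism, since $\psi_\g$ is one by the preceding lemma), \lemref{Bounded} gives $\widehat{\varphi}_\g = \widehat{\psi}_\g$, and \lemref{Homogenization} bounds the defect of $\widehat{\psi}_\g$ by $2 \cdot 6 = 12$. The only real content is therefore the uniform bound, and the mechanism behind it is that $\omega_\g$ behaves like a $G$--invariant antisymmetric quasi-cocycle: it satisfies $\omega_\g(y,x) = -\omega_\g(x,y)$, $\omega_\g(gx,gy) = \omega_\g(x,y)$, and, by \lemref{Coboundary}, $\abs{\omega_\g(x,y) + \omega_\g(y,z) + \omega_\g(z,x)} \le 6$ for all $x,y,z \in X$. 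First I would record the immediate consequence that inserting an intermediate vertex costs at most $6$, i.e.\ $\omega_\g(x,z) = \omega_\g(x,y) + \omega_\g(y,z) + O(6)$; this will be used twice.

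Next I would prove the claim that for any two vertices $x, y \in X$ and any $g \in G$,
\[ \abs{\omega_\g(x, gx) - \omega_\g(y, gy)} \le 12. \]
The argument is to insert $y$ between $x$ and $gx$, giving $\omega_\g(x,gx) = \omega_\g(x,y) + \omega_\g(y,gx) + O(6)$, and then insert $gy$ between $y$ and $gx$, giving $\omega_\g(y,gx) = \omega_\g(y,gy) + \omega_\g(gy,gx) + O(6)$. Now $G$--invariance and antisymmetry give $\omega_\g(gy,gx) = \omega_\g(y,x) = -\omega_\g(x,y)$, so the two occurrences of $\omega_\g(x,y)$ cancel and one is left with $\omega_\g(x,gx) = \omega_\g(y,gy) + O(12)$, as claimed. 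The only thing to watch here is the sign bookkeeping in the two applications of \lemref{Coboundary} and, especially, the cancellation of the ``basepoint'' terms $\omega_\g(x,y)$, which uses both the invariance and the antisymmetry of $\omega_\g$ and nothing finer about the counting functions $c_\g$, $c_{\bg}$.

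Finally, I would apply the claim with $x = \O$ and $y = x_g$ to conclude $\abs{\psi_\g(g) - \varphi_\g(g)} \le 12$ for all $g$, so $\psi_\g - \varphi_\g$ is uniformly bounded; then invoke the three general facts cited above to obtain that $\varphi_\g$ is a quasimorphism, that $\widehat{\varphi}_\g = \widehat{\psi}_\g$, and that this homogeneous quasimorphism has defect at most $12$. I do not expect a genuine obstacle: the whole proof is a short formal manipulation of the quasi-cocycle identity, and the only place care is needed is in tracking the additive error constants and the cancellation described above.
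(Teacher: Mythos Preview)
Your proof is correct and essentially identical to the paper's: both bound $\abs{\psi_\g(g)-\varphi_\g(g)}$ by applying \lemref{Coboundary} to the two triangles $(\O, x_g, g\O)$ and $(x_g, gx_g, g\O)$ and using $G$--invariance and antisymmetry of $\omega_\g$ to cancel the basepoint term $\omega_\g(\O, x_g)$. Your phrasing in terms of ``inserting intermediate vertices'' is just a repackaging of the same computation, and the formal consequences you draw from \lemref{Bounded} and \lemref{Homogenization} match the paper exactly.
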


  \begin{proof}

    For any $g \in G$ we have 
    \begin{align*}
    \abs{ \psi_\g(g) - \varphi_\g(g)} 
    & = \abs{ \omega_\g(\O, g\O) - \omega_\g(x_g, gx_g) 
        + \omega_\g(g\O,x_g) - \omega_\g(g\O,x_g)} \\
    & = \left\lvert \omega_\g(\O, g\O) + \omega_\g(g\O,x_g) -
    \( \omega_\g(g\O,x_g) + \omega_\g(x_g, gx_g) \) \right. \\ 
    & \quad + \left. \omega_\g(x_g,\O) - \omega_\g(x_g,\O) \right\rvert \\
    & \leq \abs{ \omega_\g(\O, g\O) + \omega_\g(g\O,x_g) + \omega_\g(x_g,\O)} \\
    & \quad + \abs{ \omega_\g(g\O,x_g) + \omega_\g(x_g, gx_g) +
      \omega_\g(x_g,\O) } \\  
    & = \abs{ \delta \omega_\g(\O, g\O, x_g)} + \abs{ \delta
    \omega_\g(g\O, x_g, gx_g)} \\ &\leq 12, 
    \end{align*}
    by \lemref{Coboundary}. This shows that $\psi_\g - \varphi_\g$ is
    uniformly bounded. The other conclusions follow immediately from
    \lemref{Bounded} and \lemref{Homogenization}. \qedhere

  \end{proof}

  Note that that the equality $\widehat{\varphi}_\g =
  \widehat{\psi}_\g$ also implies that this quasimorphism is
  independent of the choices of basepoints used to define
  $\varphi_{\g}$ and $\psi_{\g}$. 
  
  \begin{remark} \label{Rem:CFL6.6} 

    The bounds in the preceding lemmas can be improved by a factor of $2$
    in the special case where $X$ is a $1$--dimensional CAT(0) cube
    complex (that is, a simplicial tree). In this case, half-spaces are
    never transverse, so two segments overlap if and only if they have
    non-empty intersection. We obtain an improvement in equation
    \eqref{eqn2}, which becomes instead 
    \begin{equation}
      c_\g(x,y) + c_\g(y,z) \leq c_\g(x,z) \tag{\ref{eqn2}'}
    \end{equation}
    since there is no need to discard $g_k\g$ from the collection of
    segments in $[x,z]$. This leads to the bounds 
    \[
      \abs{ \omega_\g(x,z) - \omega_\g(x,y) - \omega_\g(y,z)} \le 1
    \]
    in \lemref{Juncture}, $\abs{\delta \omega_\g(x, y, z)} \leq 3$ in
    \lemref{Coboundary}, and a defect of at most $6$ in
    \lemref{Defect}. Thus we have a new proof of Theorem 6.6 of
    \cite{CFL}, which is the statement that these quasimorphisms have
    defect at most $6$. 

    At this point, one could enhance \thmref{Main} to say that
    $\scl(g) \geq 1/12$ when $X$ is a tree, but this already follows from
    Theorem 6.9 of \cite{CFL}. 

  \end{remark}

  \subsection*{Bounded cohomology of right-angled Artin groups}

  Recall that for any group $G$, we denote by $\widetilde{\QH}(G)$ the
  space of homogeneous quasimorphisms on $G$, modulo homomorphisms. It
  is a subspace of the second bounded cohomology $H^2_b(G;\R)$. 

  If $G$ is a non-abelian
  right-angled Artin group, then there is a retraction onto a
  non-abelian free subgroup $H$, which induces injections $H^2_b(H;\R)
  \into H^2_b(G;\R)$ and $\widetilde{\QH}(H) \into
  \widetilde{\QH}(G)$. Therefore the spaces $H^2_b(G;\R)$ and
  $\widetilde{\QH}(G)$ are infinite-dimensional (we thank the referee
  for suggesting this viewpoint). The next result provides
  another way of seeing this, using the quasimorphisms
  $\widehat{\varphi}_{\gamma}$. We note that the quasimorphisms
  obtained below appear to be quite different from those arising
  via the retraction to $H$, even though their restrictions to $H$
  agree. 

  \begin{proposition}\label{Prop:linear-indep}
  
    Let $G = A_{\Gamma}$ be a non-abelian right-angled Artin group,
    and $X$ the natural cube complex on which $G$ acts. Then there is
    an infinite family $\set{\gamma_i}$ of segments in $\half(X)$ such
    that the homogeneous quasimorphisms
    $\set{\widehat{\varphi}_{\gamma_i}}$ are linearly independent in 
    $\widetilde{\QH}(G)$. 

  \end{proposition}

  \begin{proof}

    Let $a,b$ be standard generators of $G$ which generate a free
    subgroup $H < G$. We shall show that every ``non-overlapping''
    Brooks quasimorphism on $H$ is the restriction of a quasimorphism
    $\widehat{\varphi}_{\g}$ for some $\g$. By 
    \cite[Proposition 5.1]{Mitsumatsu} there is an infinite linearly
    independent family of Brooks quasimorphisms in
    $\widetilde{\QH}(H)$, and their extensions will be independent in
    $\widetilde{\QH}(G)$. 

    If $w$ is a reduced word in $a,b$, the non-overlapping
    Brooks quasimorphism $\widehat{B}_w \from \langle a, b\rangle \to
    \R$ is the homogenization of the quasimorphism $B_w = C_w -
    C_{\overline{w}}$, where $C_{w}(g)$ is the maximal number of
    disjoint subwords of $g$ (considered as a reduced word) which
    equal $w$. In the $1$--skeleton of $X$ there is an edge path
    labeled by the word $w$, 
    starting at a vertex $x$ and ending at $y$. Because $a$ and $b$ do
    not commute, no two half-spaces dual to this segment can
    cross. Thus $[x,y]$ is a \emph{segment}, which we denote by
    $\g(w)$. Modulo the $G$--action on $X$, $\g(w)$ is uniquely
    determined by $w$. 

    We claim that $B_w(g) = \varphi_{\g(w)}(g)$ for every $g \in H$, and
    therefore $\widehat{B}_w$ is the restriction of
    $\widehat{\varphi}_{\g(w)}$ to $H$. If an element $g \in H$ is
    considered as a reduced word, it has a combinatorial axis in $X$
    which is labeled by $g^{\infty}$. The half-spaces dual to this
    axis never cross, and so the partial ordering on $\axis^+_g$ is a
    linear ordering. Thus $\ess_g$ is one-dimensional and the axis is
    an embedded copy of $\ess_g$ in $X_g \subset X$. Let $x_g$ be
    a vertex on this axis at the beginning of the word $g$; this is
    the basepoint for the definition of $\varphi_{\g(w)}(g)$. Now segments
    in $[x_g, g x_g]$ correspond bijectively with subwords of $g$
    via the labelling, and so $B_w(g) = \varphi_{\g(w)}(g)$. \qedhere

  \end{proof}

\section{Dilworth's theorem and equivariant embeddings}
  
  \label{Sec:Dilworth}

  Let $P$ be a partially ordered set. A \emph{chain} in $P$ is a subset
  that is linearly ordered. A chain is \emph{maximal} if it is not
  properly contained in another chain. An \emph{antichain} in $P$ is a
  subset such that no two elements are comparable to each other. The
  \emph{width} of $P$ is the maximal cardinality of an antichain (which
  may be $\infty$). 
  
  \begin{lemma}[Dilworth's theorem] \label{Lem:Dilworth}
    
    Let $P$ be a partially ordered set. If $P$ has width $d < \infty$ then
    there is a partition of $P$ into $d$ chains. Furthermore, there is such
    a partition such that one of the chains is maximal. 

  \end{lemma}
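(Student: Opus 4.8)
The plan is to prove Dilworth's theorem by induction on the width $d$, and then verify the ``furthermore'' clause as a small addendum. The base case $d=1$ is immediate: a poset of width $1$ is itself a chain, so the trivial one-part partition works, and that single chain is (vacuously) maximal.

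For the inductive step, suppose the result holds for all posets of width less than $d$, and let $P$ have width $d$. First I would dispose of the trivial case: if $P$ is itself a chain there is nothing to prove (though then $d=1$), so assume there is an antichain of size $d \geq 2$. The standard argument is to pick a maximal chain $C$ in $P$ and consider $P' = P \setminus C$. If the width of $P'$ is at most $d-1$, then by induction $P'$ partitions into $d-1$ chains, and adjoining $C$ gives a partition of $P$ into $d$ chains; moreover $C$ is maximal by construction, so this already yields the ``furthermore'' statement in this case. The difficulty is that removing a maximal chain need not lower the width, so one cannot always argue this way. The classical fix (Galvin's short proof) handles the remaining case: let $\{a_1,\dots,a_d\}$ be an antichain of size $d$ in $P'$ (so $\operatorname{width}(P') = d$). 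Define $D^- = \{x \in P : x \leq a_i \text{ for some } i\}$ and $D^+ = \{x \in P : x \geq a_i \text{ for some } i\}$. Since $C$ is a maximal chain, its top element is maximal in $P$ and its bottom element is minimal in $P$, and one checks these cannot lie in $D^-$, $D^+$ respectively together with the $a_i$'s — more precisely, $D^+ \neq P$ and $D^- \neq P$ because the extreme elements of $C$ are not comparable to any $a_i$ (as the $a_i \in P'$ and $C$ is maximal). Hence each of $D^+, D^-$ has width exactly $d$ but is a proper subset; apply the induction hypothesis in the form of Dilworth applied to these smaller posets (one needs the statement for posets that are proper subsets but still of width $d$ — this is the subtle point, so really the induction should be on $|P|$ with $d$ fixed, or one invokes the version of the theorem already proved for all smaller posets). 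Writing $D^- = \bigsqcup_{i=1}^d C_i^-$ with $a_i \in C_i^-$ and similarly $D^+ = \bigsqcup_{i=1}^d C_i^+$ with $a_i \in C_i^+$, and checking that $a_i$ is the maximum of $C_i^-$ and the minimum of $C_i^+$ (using that the $a_i$ form an antichain), one glues $C_i^- \cup C_i^+$ into a single chain through $a_i$. Since $P = D^- \cup D^+$, these $d$ chains partition $P$.

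The main obstacle is exactly the bookkeeping around which induction to run: ``removing a maximal chain'' does not reduce the width, so the cleanest route is to induct on $|P|$ (for finite $P$) or, for possibly infinite $P$, to be careful that $D^+$ and $D^-$ are \emph{proper} subsets so that some transfinite or structural induction applies — alternatively one restricts attention to the case $P$ is finite, which suffices for the applications in this paper (the posets of half-spaces that arise have finite width and the relevant arguments are ultimately finitary). I would state and prove the finite case carefully and remark that it is all that is needed. For the ``furthermore'' clause in the Galvin case, note that $C$ itself is not one of the output chains, so I would instead argue directly: take any maximal chain $C$ of $P$; if deleting it drops the width we are in the easy case above and done; otherwise, in the Galvin construction one can choose the antichain $\{a_1,\dots,a_d\}$ to contain a fixed maximal chain's interaction appropriately — but more simply, once we have \emph{any} partition of $P$ into $d$ chains $\{C_1,\dots,C_d\}$, pick a maximal chain $M$ of $P$; it meets each $C_i$ in at most one element (since $M$ is a chain and... no, that is false). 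The correct and clean way to get the ``furthermore'' is: given a partition into $d$ chains, replace one of them by extending it to a maximal chain and re-partitioning the removed elements — concretely, let $\{C_1,\ldots,C_d\}$ be a partition into $d$ chains, let $M \supseteq C_1$ be a maximal chain of $P$ containing $C_1$; then $P \setminus M$ has width at most $d-1$ (any antichain in $P\setminus M$ of size $d$ would, by the Mirsky/Dilworth duality or a direct pigeonhole against the $C_i$, have two elements in some $C_i$, but that $C_i$ may have lost only the part inside $M$...). Since this extension argument is the genuinely fiddly part, I would prove it as a separate short lemma: \emph{if $P$ has a partition into $d$ chains and $M$ is any maximal chain, then $P \setminus M$ has width at most $d-1$} — proved by taking an antichain $A$ in $P\setminus M$, noting $A \cup \{m\}$ is not an antichain for the bottom or top... actually the clean statement is that a maximal chain meets every maximal-length... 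I would instead cite that this ``furthermore'' is standard and give the argument that any maximal chain $M$ intersects every chain in a partition into minimally many chains — the cleanest proof being via the min-max equality: if $P = C_1 \sqcup \cdots \sqcup C_d$ with $d = \operatorname{width}(P)$, then for a maximal chain $M$, $P \setminus M$ cannot contain an antichain of size $d$, since such an antichain together with $M$'s extremal behavior forces a repeat. The plan, then, is to present the Galvin proof for the main statement and a one-paragraph argument for the ``furthermore'', flagging the finite-vs-infinite subtlety and restricting to finite $P$ if needed.
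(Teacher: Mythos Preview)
The paper does not prove this lemma: it cites the first conclusion as the classical Dilworth theorem and says only that the ``furthermore'' clause ``can be proved using Hausdorff's maximal principle.'' Your Galvin argument for the main statement is correct for finite $P$, and since the paper only ever applies the lemma to the finite poset $[A, gA]$, that is all that is needed here.

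The gap is in the ``furthermore'' clause, where you circle the correct idea and then talk yourself out of it. The argument the paper's hint points to is exactly the one you begin and abandon: take any Dilworth partition $P = C_1 \sqcup \cdots \sqcup C_d$ and, by Hausdorff's maximal principle, extend $C_1$ to a maximal chain $M \supseteq C_1$. Since $C_1 \subseteq M$,
\[
P \setminus M \ = \ (C_2 \setminus M) \sqcup \cdots \sqcup (C_d \setminus M),
\]
a union of $d-1$ chains; hence $P \setminus M$ has width at most $d-1$, and $M, \, C_2 \setminus M, \, \ldots, \, C_d \setminus M$ is a partition of $P$ into $d$ chains with $M$ maximal (none of the $C_j \setminus M$ is empty, or $P$ would be covered by fewer than $d$ chains, contradicting $\mathrm{width}(P)=d$). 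Your parenthetical worry is unfounded: the pigeonhole is among $C_2, \ldots, C_d$ only, because $C_1 \subseteq M$ means no element of $P \setminus M$ lies in $C_1$, and two elements of an antichain landing in the same chain $C_j$ is already a contradiction regardless of what $C_j$ has ``lost'' to $M$. Note also that the separate lemma you then propose, with ``$M$ any maximal chain,'' is actually false (in the $N$-poset $a<c$, $b<c$, $b<d$, take $M = \{b,c\}$: then $P\setminus M = \{a,d\}$ is an antichain of size $2$); it is essential that $M$ extend one of the chains already in the partition.
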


  This first conclusion is the traditional statement of the theorem. 
  The second claim can be proved using Hausdorff's maximal principle. 

  The partition of $P$ into chains provided by the theorem will be
  called a \emph{Dilworth partition}. 
  
  \begin{definition}

    Let $P$ be a partially ordered set that admits an order-preserving
    free action by an 
    infinite cyclic group $\gen{g}$. Let $A$ be an antichain in $P$. We say
    $A$ is \emph{$\gen{g}$--descending} if $ga \not> a'$ for all $a, a'\in
    A$. We say that $A$ \emph{spans} $P$ if for each $p \in P$ there exist
    $a, a' \in A$ and $r, s \in \Z$ such that $g^r a > p > g^s a'$. 

  \end{definition}

  We further define the subsets
    \begin{align*}
      [A,gA] &= \set{p \in P \st x \geq p \geq y
      \text{ for some } x,y \in (A \cup gA)}\\
      &= \set{p \in P \st x \geq p \geq y \text{ for some } x\in A,  y
      \in gA }, \text{ if $A$ is $\gen{g}$--descending} 
    \end{align*} and 
    \[ 
      [A, gA) \ = \ [A, gA] - gA, \quad (A, gA] \ = \ [A, gA] - A.
    \]
  
  \begin{lemma}[Equivariant Dilworth theorem] \label{Lem:EquiDilworth}
    
    Let $P$ be a partially ordered set of width $d < \infty$ with an
    order-preserving free
    action by an infinite cyclic group $\gen{g}$. Suppose further that
    there is an antichain $A$ of cardinality $d$ that is is both
    $\gen{g}$--descending and spans $P$. Then there is a
    $\gen{g}$--invariant partition of $P$ into $d$ chains whose
    intersection with $[A, gA]$ is a Dilworth partition which includes a
    maximal chain in $[A, gA]$. 

  \end{lemma}
  
  \begin{proof}

    Apply Lemma \ref{Lem:Dilworth} to the partially ordered set $[A, gA]$  
    to obtain a partition by chains $[A, gA] = Q_1 \cup \cdots \cup
    Q_d$, with $Q_1$ maximal in $[A,gA]$. Each $Q_i$ contains exactly one 
    element of $A$ and one of $gA$, since these are antichains of
    cardinality $d$. We claim that these are the maximal
    and minimal elements, respectively, of $Q_i$. 
    
    Suppose the unique element $a$ of $A \cap Q_i$ is not maximal in
    $Q_i$. If $p\in Q_i$ satisfies $p > a$ then, since $p \in (A, gA]$,
    we must have $x > p$ for some $x \in A$. Then $x > p > a$, 
    contradicting that $A$ is an antichain. By a similar argument, the
    unique element of $gA \cap Q_i$ is minimal in $Q_i$. 

    Now label the elements of $A$ and define a permutation $\sigma$ as
    follows: $a_i$ is the maximal element of $Q_i$ and $g a_{\sigma(i)}$
    is the minimal element of $Q_i$, for $i = 1, \ldots, d$. Define the
    sets 
    \[ 
      P_i = \bigcup_{k \in \Z} g^k Q_{\sigma^k(i)}
    \]
    for each $i$. Note that for each $k$, the element $g^k a_{\sigma^k(i)}$
    is both the minimum of $g^{k-1}Q_{\sigma^{k-1}(i)}$ and the maximum of
    $g^k Q_{\sigma^k(i)}$. Hence $P_i$ is a chain, being a concatenation of
    chains. Since $\gen{g}$ acts freely on $P$, the chains $P_i$ are
    disjoint. Their union is the set $\bigcup_{k \in \Z} g^k [A, gA]$. It
    is immediate that $g P_{\sigma(i)} = P_i$, so the partition of
    $\bigcup_{k \in \Z} g^k [A, gA]$ by the chains $P_i$ is preserved by
    $g$. It remains to show that this set is all of $P$. 

    Given $p \in P$, let $a, a', r, s$ be given such that $g^r a > p >
    g^s a'$. First we claim that $s > r$. If not, then $r > s$. Writing
    $a = a_i$ we have $g^r a_i < g^{r-1} a_{\sigma^{-1}(i)} < \cdots <
    g^s a_{\sigma^{s-r}(i)}$, whence $g^s a' < g^s a_{\sigma^{s-r}(i)}$,
    a contradiction since $g^s A$ is an antichain. 

    Next we show that $p \in \bigcup_{k\in \Z} g^k [A,gA]$, by induction on
    $s-r$. Clearly we may assume that $p \not\in \bigcup_{k\in\Z} g^kA$. If
    $s-r = 1$ then we already have $p \in g^r [A,gA]$. If $s - r > 1$ then
    consider the (maximal) antichain $g^{r+1}A$. It contains an element
    $g^{r+1}a''$ which is comparable to $p$, by maximality. Then either
    $g^r a > p > g^{r+1}a''$ or $g^{r+1}a'' > p > g^s a'$, and in either
    case the induction hypothesis yields the conclusion that $p \in
    \bigcup_{k \in \Z} g^k [A, gA]$. \qedhere

  \end{proof}
  
  \subsection*{Equivariant Euclidean embeddings}

  Let $\R^d$ be equipped with its standard integer cubing. Given a
  coordinate $i$ and an integer $n$, we define: 
  \[ 
    H^i_n = \set{(x_1, \dotsc, x_d) \in \R^d \st x_i \geq n+ 1/2}. 
  \]
  Note that $H^i_n$ and $H^j_m$ are transverse in $\R^d$ if and only if $i
  \not= j$. We also define $\half^i = \set{ H^i_n \st n
    \in \Z}$, and set 
  \[
  \half^+(\R^d) = \half^1 \sqcup \dotsb \sqcup \half^d. 
  \] 
  The set of half-spaces of $\R^d$ is $\half(\R^d) = \half^+(\R^d) \sqcup
  \half^-(\R^d)$, where $\half^-(\R^d) = \set{ \bH \st H \in
  \half^+(\R^d)}$. 
  
  \begin{proposition} \label{Prop:TautEmbedding}

    Let $g \in G$ be hyperbolic and suppose $\gen{g}$ acts
    non-transversely on $\ess_g$. Let $C$ be a cube in $\ess_g$ of 
    dimension $d = \dim(\ess_g)$ and let $A$ be the set of elements of
    $\axis_g^+$ dual to the edges of\/ $C$. Then 
    there exist a $\gen{g}$--action on $\R^d$ and a $\gen{g}$--equivariant
    isometric embedding $\phi \from \ess_g \into \R^d$ satisfying the
    following properties: 
    \begin{enumerate}[label=\textup{(\arabic*)}]
      \item $\phi(C) = [0,1]^d \subset \R^d$. \label{taut1}
      \item The induced map $\phi_* \from \axis_g \to \half(\R^d)$ is a
      bijection, with $\phi_*(\axis_g^+) = \half^+(\R^d)$. \label{taut2}
      \item The set $[A,gA] \cap \phi_*^{-1}(\half^1)$ is tightly nested in
      $\ess_g$. \label{taut3}
      \item $[A, g A) = [o, g o]$, where $o$ is the minimal vertex
        of\/ $C$. \label{taut4}
   \end{enumerate}

  \end{proposition}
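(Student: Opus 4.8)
The plan is to build the embedding $\phi$ coordinate by coordinate, using the equivariant Dilworth theorem (Lemma~\ref{Lem:EquiDilworth}) applied to the partially ordered set $P = \axis_g^+$ with its $\gen{g}$--action. First I would check the hypotheses of that lemma: the width of $P$ is exactly $d = \dim(\ess_g)$ (antichains in $P$ correspond to pairwise-transverse families of half-spaces, hence to cubes, by Lemma~\ref{Lem:Crossing}), and the set $A$ of half-spaces dual to the maximal cube $C$ is an antichain of cardinality $d$. By Proposition~\ref{Prop:Descending}\ref{desc1}, $A$ is $\gen{g}$--descending, and by \ref{desc3} it spans $P$. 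So Lemma~\ref{Lem:EquiDilworth} gives a $\gen{g}$--invariant partition of $\axis_g^+$ into $d$ chains $P_1, \dots, P_d$, with $gP_{\sigma(i)} = P_i$ for a permutation $\sigma$, and whose restriction to $[A, gA]$ is a Dilworth partition.

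Next I would use these chains to define the coordinate functions. Each chain $P_i$ is an infinite chain in $\axis_g^+$ (infinite because it meets every $g$--translate $g^k[A,gA]$), linearly ordered by reverse inclusion, so it is order-isomorphic to $\Z$; I would fix such an isomorphism, normalized so that the unique element of $A$ in $P_i$ (which is $a_i$, the maximal element of $Q_i$) corresponds to the half-space $H^i_0$ and $ga_{\sigma(i)} \in gA \cap P_i$ corresponds to $H^i_{1}$ or the appropriate index — here one must be careful that the indexing is compatible with the $\gen{g}$--action, which is exactly why we need $gP_{\sigma(i)} = P_i$: the shift-by-$g$ on $P$ must correspond to a permutation-of-coordinates-and-shift on $\R^d$, i.e. we let $\gen{g}$ act on $\R^d$ by $g \cdot (x_1,\dots,x_d) = (\dots)$ permuting coordinates by $\sigma$ and shifting the relevant coordinate. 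This assembles into a map $\phi_* \colon \axis_g^+ \to \half^+(\R^d)$ which is a bijection by construction, extends to $\axis_g$ by $\bar H \mapsto \overline{\phi_*(H)}$, and is $\gen{g}$--equivariant. The actual embedding $\phi$ of cube complexes is then recovered by the Sageev construction / ultrafilter formalism: a vertex $v$ of $\ess_g$ is a principal ultrafilter on $\axis_g$, and $\phi_*$ transports it to an ultrafilter on $\half(\R^d)$; I would check this is still a principal ultrafilter (it satisfies the descending chain condition because each chain $P_i$ maps to $\half^i$ which is a copy of $\Z$), hence a vertex of $\R^d$, and that adjacency and cube-filling are preserved, so $\phi$ is an isometric embedding of cube complexes. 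Property~\ref{taut1} is arranged by the normalization: the vertices of $C$ are the ultrafilters whose restriction to $A$ is prescribed, mapping onto the $2^d$ vertices of $[0,1]^d$. Property~\ref{taut2} is the definition of $\phi_*$.

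For~\ref{taut3}, I would argue that $[A,gA] \cap \phi_*^{-1}(\half^1) = [A,gA] \cap P_1$, which is the maximal chain $Q_1$ in the Dilworth partition of $[A,gA]$ guaranteed by Lemma~\ref{Lem:EquiDilworth}; I then need that a maximal chain inside $[A,gA]$ is in fact tightly nested \emph{in $\ess_g$}, not merely in $[A,gA]$. This is the point where one uses that $[A,gA]$ is ``order-convex'' in $\axis_g^+$ — any half-space of $\axis_g^+$ lying strictly between two elements of $Q_1 \subseteq [A,gA]$ is automatically in $[A,gA]$ (since $[A,gA]$ is defined by being bracketed by $A$ and $gA$), so maximality in $[A,gA]$ upgrades to tight nestedness in $\axis_g^+$, which is the half-space poset of $\ess_g$. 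Property~\ref{taut4} is Proposition~\ref{Prop:Descending}\ref{desc2} restated: $[o,go] = \{K : H \supseteq K \supset gH' \text{ for some } H,H' \in A\} = [A,gA] - gA = [A,gA)$, using $\mess_g = \ess_g$ (Proposition~\ref{Prop:Equality}) to know $o$ lies on an axis.

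The main obstacle I anticipate is the bookkeeping in the normalization step: getting the index shifts on the $d$ chains to be simultaneously compatible with (a) the $\gen{g}$--action via the permutation $\sigma$, (b) the requirement $\phi(C) = [0,1]^d$, and (c) the requirement that $A$ maps to $\{H^i_0\}_i$ and $gA$ to the ``next'' half-spaces in each chain — since $\sigma$ may be a nontrivial permutation, the orbit of a coordinate under $\sigma$ has some length $\ell$, and the $g$--translation only shifts the index after traversing the whole $\sigma$--orbit, so one must define $\phi_*$ on one representative per $\sigma$--orbit and propagate by $g$. Verifying that the resulting $\phi$ is well-defined, injective, and an isometry (equivalently, that the transported ultrafilters are exactly the principal ones and that no cube of $\R^d$ in the image fails to be filled by $\ess_g$ — or rather, that the map on cube complexes is an embedding regardless) requires care, but it is essentially the standard correspondence between CAT(0) cube complexes and their half-space posets, combined with the combinatorial retraction arguments already used in the proofs of Lemmas~\ref{Lem:CharProduct} and~\ref{Lem:Crossing}.
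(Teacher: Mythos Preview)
Your proposal is correct and follows essentially the same route as the paper: verify the hypotheses of the equivariant Dilworth theorem via \propref{Descending}, partition $\axis_g^+$ into $d$ invariant chains, use each chain as a coordinate, and read off properties \ref{taut1}--\ref{taut4}. The only notable difference is that the paper defines $\phi$ explicitly on vertices (for each $i$, $\phi(v)_i$ is the index in $P_i$ of the largest half-space not containing $v$) rather than through ultrafilter transport, and it handles the equivariance bookkeeping more simply than you fear: once each $K_i \in A \cap P_i$ is sent to $H^i_0$, one just sets $n_i = \phi(go)_i$ and defines $g(x)_i = x_{\sigma(i)} + n_i$; there is no need to propagate along $\sigma$--orbits or to worry about compatibility across an orbit, since each chain is normalized independently and the shift is read off afterward.
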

  
  By property \ref{taut2}, we can henceforth identify elements of
  $\axis_g^+$ with their corresponding half-spaces in $\half^+(\R^d)$ and
  we shall denote the corresponding decomposition as $\axis_g^+ =
  \half^1\sqcup \cdots \sqcup \half^d$. By property \ref{taut3}, every
  subsegment of $[A,gA] \cap \half^1$ is tightly nested in $\axis_g^+$. We
  will call $\gamma = [A,gA) \cap \half^1$ the \emph{taut segment} of
  the embedding; $[A,gA] \cap \half^1$ the \emph{extended} taut
  segment; and the map $\phi$ a taut $\gen{g}$--equivariant embedding
  of $\ess_g$ into $\R^d$. 
  
  \begin{proof}[Proof of \propref{TautEmbedding}]

    Let $P=\axis_g^+$ be partially ordered by inclusion; note that
    this partial ordering is preserved by $\gen{g}$. It has width $d$
    since $\ess_g$ has dimension $d$, and $A$ is an antichain of
    cardinality $d$. By \propref{Descending}\ref{desc1}, $A$ is 
    $\gen{g}$--descending. By \propref{Descending}\ref{desc3}, $A$
    spans $P$. We also have that $[A,g A) = [o,g o]$, by
    \propref{Descending}\ref{desc2}, and therefore \ref{taut4} holds. 

    Now apply \lemref{EquiDilworth} to $P$ to obtain a
    $\gen{g}$--invariant partition of $P$ into $d$ chains $P_1,
    \dotsc, P_d$. Without loss of generality, we may assume that $P_1
    \cap [A,gA]$ is a maximal chain in $[A,gA]$. Note that each chain
    $P_i$ is bi-infinite, since $\gen{g}$ acts freely on it. 

    For each $i$, let $K_i$ be the unique element of $P_i \cap
    A$. There is an order-preserving bijection $P_i \to \half^i$ 
    induced by sending $K_i$ to $H^i_0$. The resulting bijection
    $\axis_g^+ \to \half^+(\R^d)$ extends to a bijection $\phi_* \from
    \axis_g \to \half(\R^d)$ in an obvious way. 
  
    We now define an isometric embedding $\phi \from \ess_g \into \R^d$
    whose induced map on half-spaces is $\phi_*$. For any $x \in \R^d$,
    denote by $x_i$ its $i$--th coordinate. Let $v \in \ess_g$ be any
    vertex. For each $i$, let $K \in P_i$ be the largest element such that
    $v \notin K$. Define $\phi(x)_i = n$, where $\phi_*(K)=H^i_n$. This
    defines an embedding of the vertices of $\ess_g$ into $\R^d$. Two
    vertices $v$ and $w$ in $\ess_g$ bound an oriented edge $(v,w)$ dual
    to $K \in P_i$ if and only if $\phi(w)_i = \phi(v)_i + 1$ and
    $\phi(w)_j = \phi(v)_j$ for all $j \neq i$. Therefore $\phi$ extends
    to an embedding of the $1$--skeleton of $\ess_g$, and hence
    extends to all of $\ess_g$. It is immediate that $\phi$ induces
    the same map on half-spaces as $\phi_*$, so property \ref{taut2}
    holds. By construction, $o$ is mapped to the origin and the
    vertex of $C$ opposite $o$ is mapped to $(1, \dotsc, 1)$, so
    \ref{taut1} holds. 
    
    By $\gen{g}$--invariance of the partition, there is a permutation
    $\sigma$ such that $gP_{\sigma(i)} = P_i$. For each $i$ let $n_i =
    \phi(g(o))_i$. That is, $n_i$ is the shift given by the bijection
    $g \from P_{\sigma(i)} \to P_i$, relative to the basepoints
    $K_{\sigma(i)}$ and $K_i$. Then, for every vertex $v \in \ess_g$,
    we have 
    \[ 
      \phi(g(v))_i = \phi(v)_{\sigma(i)} + n_i. 
    \] 
    This allows us to define an action of $\gen{g}$ on $\R^d$: for
    every $x \in \R^d$ let $g(x)_i = x_{\sigma(i)} + n_i$. By
    construction, $\phi$ is $\gen{g}$--equivariant. 

    For property \ref{taut3}, note that $[A, gA] \cap
    \phi_*^{-1}(\half^1) = P_1 \cap [A,gA]$. Suppose $K' \supset K
    \supset K''$ for some $K \in \axis_g^+$ and $K', K'' \in P_1 \cap
    [A, gA]$. There is a unique $i \in \Z$ such that $K \in
    [g^iA,g^{i+1}A)$. If $i < 0$, then $K \supset H$ for some $H\in
    A$, which contradicts $K' \supset K$. If $i > 0$, then $gH 
    \supset K$ for some $H \in A$. But $gH \supset K$ contradicts $K
    \supset K''$, so $K \in [A,gA]$. By maximality, $K \in P_1 \cap
    [A,gA]$. This shows that $P_1 \cap [A,gA]$ is tightly
    nested. \qedhere 
   
  \end{proof}

  \subsection*{An example}
  
  \label{example}
  
  Let $A_{\Gamma}$ be the right-angled Artin group with $\Gamma$ the
  pentagon graph: 
  \[
    A_{\Gamma} \ = \ \gen{ a, b, c, d, e \mid [a,b] = [b,c] = [c,d] =
      [d,e] = [e,a] = 1}. 
  \]
  The element $g = abcde$ is hyperbolic, and part of its essential 
  characteristic set $\ess_g$ is shown in \figref{Pure}. The
  figure also demonstrates the equivariant embedding $\ess_g \into
  \R^2$. The action of $g$ on $\R^2$ (extending the natural action on
  $\ess_g$) is by a glide reflection whose axis is a diagonal line
  through the center of the figure. The $A_{\Gamma}$--invariant
  labeling of the edges of $\ess_g$ by generators of $A_{\Gamma}$ is
  also shown. 
  \begin{figure}[htp!]
  \begin{center}
    \includegraphics{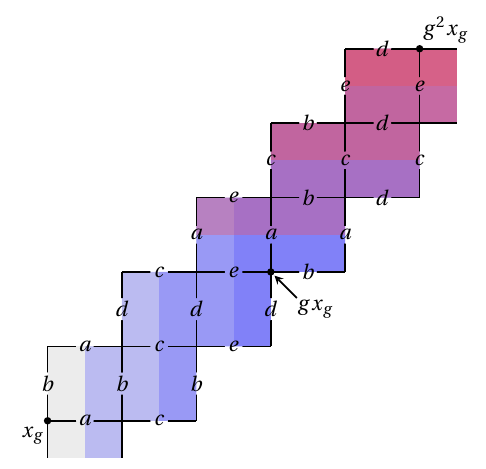}
  \end{center}
  \caption{The subcomplex $\ess_g$ embedded in $\R^2$ for the element
  $g = abcde$ in the pentagon RAAG. The extended action of $g$ on
  $\R^2$ is by a glide reflection. The three blue half-spaces (with
  labels $a$, $c$, $e$) are taken to the three red half-spaces. }  
  \label{Fig:Pure}
  \end{figure}
  For this particular choice of $g$, the essential characteristic set
  has the property that the equivariant embedding $\ess_g \into \R^2$
  is unique, up to a change of coordinates in $\R^2$ by a cubical
  automorphism. The action on $\R^2$ is always by a glide reflection,
  for this $g$. Other elements have characteristic sets that may embed
  in more than one way, with $g$ acting on $\R^2$ either as a translation
  or a glide reflection (depending on the embedding). 

  \subsection*{The staircase}

  Our goal in the rest of the paper will be to associate to each
  hyperbolic element $g$ a segment $\gamma$ such that
  $\widehat{\varphi}_\g(g) \geq 1$. Bavard Duality then will allow us to
  conclude that $\scl(g) \geq 1/24$. Here we illustrate one of the 
  difficulties in finding such segments. 

  Consider $\R^2$ with its standard integer cubing, and let
  $X$ be the subcomplex obtained by removing all vertices $(x,y) \in
  \Z^2$ with $y < x-1$ (see \figref{Staircase}). We will refer to $X$ as
  the \emph{staircase}. 
  \begin{figure}[htp!]
  \begin{center}
    \includegraphics{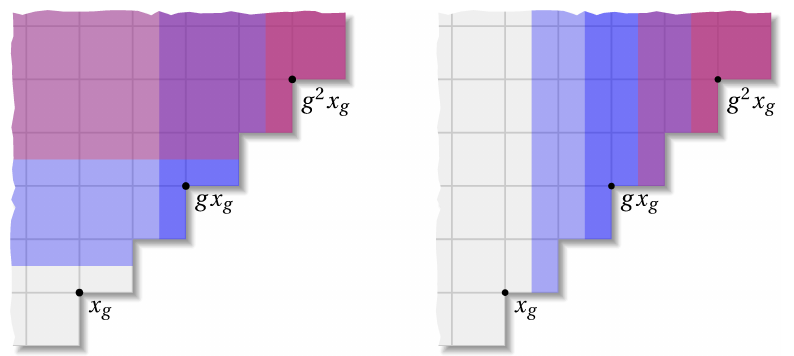}
  \end{center}
  \caption{Some tightly nested pairs in the staircase: $\set{H_1, H_2}$ in
  blue, $g \set{H_1, H_2}$ in red.} 
  \label{Fig:Staircase}
  \end{figure}

  Let $G=\gen{g}$, where $g$ is the restriction of the translation 
  $(x,y) \mapsto (x+2,y+2)$ to $X$. Note that $X=X_g = \ess_g$. Let $x_g =
  (0,0)$. Consider the two half-spaces 
  \[ 
    H_1 = \set{ (x,y) \in X \st y \ge 1/2} \quad \text{and} \quad H_2 =
    \set{ (x,y) \in X \st x \ge 3/2} 
  \] 
  shown in blue on the left hand side of \figref{Staircase}. The set
  $\gamma = \set{H_1, H_2}$ is a segment in $[x_g,gx_g]$ (recall that
  this means $\g$ is tightly nested). For any positive integer $n$,
  $g^nH_1$ and $H_2$ are transverse, so $\gamma$ and $g^n \g$ overlap. It
  follows that $c_\g(g^n) = 1$ for all $n$, which means that 
  $\widehat{\varphi}_\g(g) \leq 0$. 

  A better choice of segment $\gamma \subset [x_g, gx_g]$ is shown on the
  right hand side of \figref{Staircase}. The half-space $H_1$ has been
  replaced by $\set{(x,y) \in X \st x \ge 1/2}$. In this case, $\g$ and
  $g\g$ do not overlap, and in fact $c_\g(g^n) = n$ for all positive $n$. 

  This example indicates that from the point of view of an equivariant
  Euclidean embedding, one should choose a segment $\gamma$ which lies
  in a single coordinate direction in $\R^d$ to ensure that
  $c_\g(g^n)$ grows linearly with $n$. (Keeping $c_{\bg}(g^n)$
  bounded is a much more serious hurdle to be dealt with in
  Sections \ref{Sec:TightlyNested} and \ref{Sec:Last}.) It is for this
  reason that we required one of the chains in the Dilworth partition
  to be maximal in \lemref{EquiDilworth}, leading to property 3 in
  \propref{TautEmbedding}. This property ensures that in at least one
  coordinate direction of $\R^d$, consecutive half-spaces in $\R^d$
  are tightly nested in $\ess_g$, and therefore define segments in
  $\ess_g$. 

\section{Quadrants} 

  \label{Sec:Quadrants}

  In this section we present two basic tools for working with equivariant
  Euclidean embeddings: the Quadrant Lemma and the Elbow Lemma. They are
  useful in determining which cubes in $\R^d$ are occupied by $\ess_g$. Let
  $x_i$ and $x_j$ be coordinates of $\R^d$. We will denote by $p_{ij} \from
  \R^d \to \R^2$ the projection of $\R^d$ onto the $x_ix_j$--coordinate
  plane.

  Consider a $\gen{g}$--equivariant embedding $\ess_g \into \R^d$, where $d
  = \dim \ess_g$. Recall that via this embedding we identify elements of
  $\axis_g^+$ with their corresponding half-spaces $\half^+$ in
  $\half(\R^d)$. We will generally suppress the embedding itself and will
  treat $\ess_g$ as a subcomplex of $\R^d$. 

  \begin{remarks}\label{Rem:Square}

    (a) Recall from \lemref{Crossing} that if $H, H' \in \axis_g^+$ then $H$
    and $H'$ are transverse in $X$ if and only if they are transverse in
    $\ess_g$. When this occurs, they will also be transverse in $\R^d$, but
    not conversely. 

    (b) Expressing these two half-spaces as $H^i_n$ and $H^j_m$, the
    subcomplex $p_{ij}(\ess_g)$ of $\R^2$ contains the square $[n,n+1]
    \times [m, m+1]$ if and only if $H^i_n$ and $H^j_m$ are transverse in
    $\ess_g$. To see this, note that the latter occurs if and only if
    $\partial H^i_n$ and $\partial H^j_m$ cross in some square in $\ess_g
    \subseteq \R^d$. 
    If they cross in the square $S$, then the image of $S$ in $\R^2$
    is a cube which is dual to both $H^i_n$ and $H^j_m$, which must 
    be the square $[n, n+1] \times [m, m+1]$. 

    (c) If $H, H' \in \axis_g^+$ then $H$ and $H'$ are (tightly) nested
    in $X$ if and only if they are (tightly) nested in $\ess_g$. If they
    are nested in $\R^d$ then they are nested in $\ess_g$, but not
    conversely. There is no a priori relation between being tightly
    nested in $X$ and being tightly nested in $\R^d$. Half-spaces $H$ and
    $H'$ may be tightly nested in $X$ and not tightly nested in $\R^d$,
    and vice versa. 

  \end{remarks} 

  \begin{definition}

    A \emph{quadrant} in $\R^d$ is an open set of the form 
    \[ 
      \set{(x_1, \dotsc, x_d) \st x_i < n \text{ and } x_j > m }
    \]
    where $i \not= j$ and $m,n \in \Z$. Often, one of
    the coordinates $x_i$ or $x_j$ will be designated as the
    \emph{horizontal} coordinate. If $x_i$ is horizontal, then the quadrant
    above is called a \emph{northwest quadrant}, and if $x_j$ is
    horizontal, it is called a \emph{southeast quadrant}. 

  \end{definition}

  \begin{lemma}[Quadrant Lemma] \label{Lem:Quadrant}

    Let $H^i_n, H^j_m \in \half^+$ be half-spaces with $i \not= j$
    and suppose $x_i$ is horizontal. Then one of the following holds:  
      \begin{enumerate}[label=\textup{(\arabic*)}]
      \item $H^i_n$ and $H^j_m$ are transverse in $\ess_g$;
      \item $H^i_n \supset H^j_m$ in $\ess_g$ and $\ess_g$ is disjoint from the 
        northwest quadrant $\set{ x_i < n+1, \  x_j > m}$;
      \item $H^i_n \subset H^j_m$ in $\ess_g$ and $\ess_g$ is disjoint from the
        southeast quadrant $\set{ x_i > n, \  x_j < m+1 }$.
      \end{enumerate}
    The quadrant in case 2 or 3 that is disjoint from $\ess_g$ will be
    called the \emph{quadrant generated by $H^i_n$ and $H^j_m$}. 

  \end{lemma}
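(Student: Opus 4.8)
The plan is to analyze the projection $p_{ij}(\ess_g) \subseteq \R^2$ and use the three-way dichotomy (transverse / nested / nested) for half-spaces in $\axis_g^+$. By \remref{Square}(a), $H^i_n$ and $H^j_m$ are either transverse in $\ess_g$ or nested in $\ess_g$; in the latter case, since they are distinct elements of $\axis_g^+$, exactly one of $H^i_n \supset H^j_m$ or $H^i_n \subset H^j_m$ holds (they cannot be equal, since they sit in different coordinate families $\half^i \ne \half^j$). So if case (1) fails, we are in one of the two nested cases, and the task is to show that the corresponding quadrant is disjoint from $\ess_g$.

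First I would treat case (2), where $H^i_n \supset H^j_m$ in $\ess_g$. Suppose for contradiction that $\ess_g$ meets the northwest quadrant $Q = \set{x_i < n+1,\ x_j > m}$, say at a vertex $v \in \ess_g$ with $v_i \le n$ and $v_j \ge m+1$. The condition $v_i \le n$ means $v \notin H^i_n$ (recall $H^i_n = \set{x_i \ge n + 1/2}$), while $v_j \ge m+1$ means $v \in H^j_m$. Thus $v$ witnesses $H^j_m \cap \bar{H}^i_n \ne \emptyset$ in $\ess_g$. But $H^i_n \supset H^j_m$ in $\ess_g$ forces $H^j_m \cap \ess_g \subseteq H^i_n \cap \ess_g$, i.e. $H^j_m \cap \bar{H}^i_n \cap \ess_g = \emptyset$ — a contradiction. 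Hence no such $v$ exists, and since $\ess_g$ is a subcomplex any point of $\ess_g$ in $Q$ would force a vertex of $\ess_g$ in $Q$ (the open quadrant is a union of closed cubes of $\R^d$, and a cube of $\ess_g$ meeting it has a vertex in it), so $\ess_g \cap Q = \emptyset$. Case (3) is symmetric, exchanging the roles of $i$ and $j$: if $H^i_n \subset H^j_m$ in $\ess_g$ and $v \in \ess_g$ lies in the southeast quadrant $\set{x_i > n,\ x_j < m+1}$, then $v \in H^i_n$ but $v \notin H^j_m$, contradicting $H^i_n \cap \ess_g \subseteq H^j_m \cap \ess_g$.

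The only subtlety — and the one point worth stating carefully rather than the main obstacle — is the reduction from ``$\ess_g$ meets the open quadrant'' to ``a vertex of $\ess_g$ lies in the quadrant.'' This holds because the open quadrant $Q$, being defined by strict inequalities $x_i < n+1$ and $x_j > m$ with integer bounds, is a union of open unit cubes of $\R^d$ together with the faces interior to it; more simply, $Q$ is precisely the union of the open cells of $\R^d$ contained in it, and if a closed cube $C$ of $\ess_g \subseteq \R^d$ meets $Q$ then, pushing in the $x_i$ and $x_j$ directions, $C$ has a vertex $v$ with $v_i \le n$ and $v_j \ge m+1$, i.e. $v \in Q$. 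With that observation in hand, the argument above is complete, and the naming of the quadrant in cases (2) and (3) is just bookkeeping. I expect essentially no serious obstacle here: everything follows from unwinding the definitions of $H^i_n$, of containment in $\ess_g$ (as opposed to in $\R^d$), and of the three-way dichotomy from \remref{TransNest} and \lemref{Crossing}.
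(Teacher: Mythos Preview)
Your proof is correct and follows essentially the same approach as the paper: both reduce to the dichotomy from \remref{TransNest} and then check that no vertex of $\ess_g$ can lie in the relevant quadrant by unwinding the containment $H^j_m \cap \ess_g \subseteq H^i_n \cap \ess_g$ (or its reverse). The paper phrases this as a direct implication on vertex coordinates rather than as a contradiction, and does not spell out the ``point to vertex'' reduction you added, but the content is the same.
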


  Put another way, if $p_{ij}(\ess_g)$ does not contain the square $[n,
  n+1] \times [m, m+1]$, then it does not meet the quadrant generated by
  that square; see \figref{Quadrants}. 

  Whenever $H \in \half^i$, $K \in \half^j$ are nested in $\ess_g$ with
  $i\not= j$, denote by $Q(H,K)$ the quadrant generated by this pair of
  half-spaces. By definition, it is always disjoint from $\ess_g$. 

  \begin{proof}

    If the first alternative does not hold, then the corresponding
    half-spaces in $\ess_g$ are nested, by Remark \ref{Rem:TransNest}. That
    is, one of $H^i_n \cap \ess_g$, $H^j_m \cap \ess_g$ contains 
    the other. Suppose $H^i_n \cap \ess_g$ contains $H^j_m \cap \ess_g$. If a
    vertex $v = (v_1, \dotsc, v_d)$ of $\ess_g$ satisfies $v_j \geq m+1$ then
    $v \in H^j_m \cap \ess_g$, so $v \in H^i_n$. Hence $v_i \geq n+1$,
    showing that $v \not\in \set{ x_i \leq n, \  x_j \geq m+1}$. Thus the
    second alternative holds. Similarly, if $H^j_m \cap \ess_g$ contains
    $H^i_n \cap \ess_g$, then the third alternative holds. \qedhere

  \end{proof}
  
  \begin{figure}[htp!]
  \begin{center}
    \includegraphics{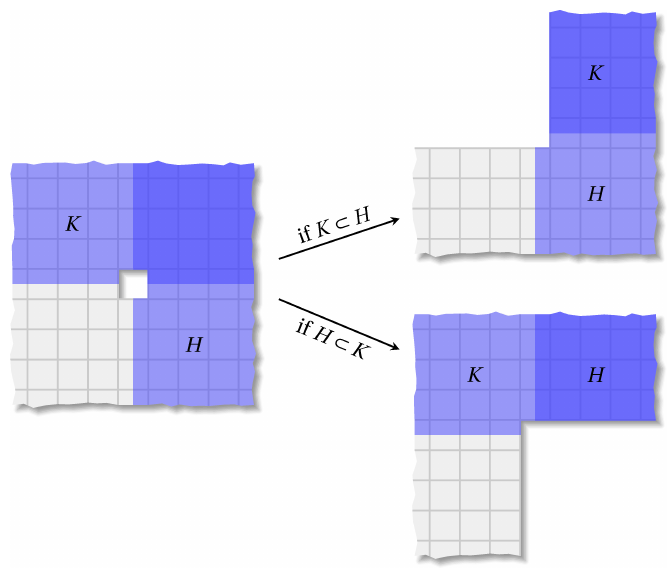}
  \end{center}
  \caption{The Quadrant Lemma: if $\ess_g$ avoids the interior of a
    square, it also avoids a northwest or southeast quadrant.}
  \label{Fig:Quadrants}
  \end{figure}

  \begin{lemma}\label{Lem:SameWay}

    Suppose $H \in \half^i$ and $K, K' \in \axis_g^+ -
    \half^i$ are such that $K, K'$ are tightly nested and the
    pairs $H, K$ and $H, K'$ are nested in $\ess_g$. Let $x_i$ be
    horizontal. Then the quadrants $Q(H,K)$ and $Q(H, K')$ both face
    northwest or both face southeast. 

  \end{lemma}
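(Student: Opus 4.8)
The plan is to reduce the assertion to a short order-theoretic fact about the poset $\axis_g^+$. By the Quadrant Lemma (\lemref{Quadrant}), with $x_i$ chosen horizontal, for any $L \in \axis_g^+ - \half^i$ that is nested with $H$ in $\ess_g$ the quadrant $Q(H,L)$ faces northwest exactly when $H \supset L$ in $\ess_g$, and faces southeast exactly when $H \subset L$ in $\ess_g$. Since $H \in \half^i$ while $K, K' \in \axis_g^+ - \half^i$, we have $H \neq K$ and $H \neq K'$; and because $H,K$ and $H,K'$ are each nested in $\ess_g$, exactly one of $H \subset K$, $H \supset K$ holds, and likewise exactly one of $H \subset K'$, $H \supset K'$ holds. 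Hence the claim that $Q(H,K)$ and $Q(H,K')$ face the same direction is equivalent to the claim that $H$ compares to $K$ in the same direction as it compares to $K'$, and this is what I would establish.

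I would argue by contradiction. Suppose $H$ compares oppositely to $K$ and to $K'$; interchanging $K$ and $K'$ if necessary (the hypotheses are symmetric in them), assume $H \supset K$ and $H \subset K'$ in $\ess_g$. Since $K$ and $K'$ are tightly nested they are in particular nested, so either $K \subset K'$ or $K' \subset K$. If $K \subset K'$, then from $K \subset H$ and $H \subset K'$ we get $K \subset H \subset K'$, so $H$ is a half-space lying strictly between $K$ and $K'$ (it equals neither), contradicting that $K$ and $K'$ are tightly nested. If instead $K' \subset K$, then $K' \subset K \subset H$ gives $K' \subset H$, while by assumption $H \subset K'$; hence $H \subset K' \subset K \subset H$, which is impossible. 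In either case we reach a contradiction, so the mixed configuration cannot occur and $Q(H,K)$, $Q(H,K')$ face the same way.

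I do not anticipate a real obstacle here: the content is entirely the translation via the Quadrant Lemma, after which the argument is a short manipulation in the partial order. The only point requiring care is the bookkeeping between being (tightly) nested in $X$, in $\ess_g$, and in $\R^d$. Since all the hypotheses and the Quadrant Lemma are phrased in terms of $\ess_g$, and by \remref{Square}(c) two half-spaces of $\axis_g^+$ are tightly nested in $X$ if and only if they are tightly nested in $\ess_g$, it is safe to run the whole argument inside $\ess_g$ and never pass to $X$ or $\R^d$.
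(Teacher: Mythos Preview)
Your proof is correct and is essentially the same as the paper's: both translate the quadrant directions via the Quadrant Lemma into containment relations and then obtain a contradiction with tight nesting. The only cosmetic difference is that the paper fixes $K \subset K'$ first and branches on the two mixed-quadrant possibilities, whereas you fix the mixed containment $H \supset K$, $H \subset K'$ and branch on the two orderings of $K, K'$.
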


  \begin{proof}

    Suppose without loss of generality that $K \subset K'$. If $Q(H, K)$
    faces northwest and $Q(H,K')$ faces southeast, then $K \subset H$ and
    $H \subset K'$ by the Quadrant Lemma. Now $H$ violates the assumption
    that $K, K'$ are tightly nested. If $Q(H, K')$ faces northwest and
    $Q(H, K)$ faces southeast, then $K' \subset H$ and $H \subset
    K$. Hence $K' \subset K$, a contradiction. 
    \qedhere

  \end{proof}

  \begin{lemma}[Elbow Lemma]\label{Lem:Elbow}

    Suppose $H^i_n \subset H^j_m$ are tightly nested in $\ess_g$ where $i\not=
    j$. Then the edges $\{n\} \times [m, m+1]$ and $[n, n+1] \times
    \{m+1\}$ are contained in $p_{ij}(\ess_g)$.

  \end{lemma}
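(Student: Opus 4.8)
The plan is to exhibit the two claimed edges as the $p_{ij}$--images of a pair of \emph{consecutive} edges along a suitably chosen geodesic in $\ess_g$. Since $H^i_n$ and $H^j_m$ are half-spaces of $\ess_g$, each of them and each of their complements is non-empty; fix a vertex $v \in H^i_n$ and a vertex $w \in \overline{H^j_m}$. From $H^i_n \subset H^j_m$ we get $H^i_n \subseteq H^j_m$ and $\overline{H^j_m} \subseteq \overline{H^i_n}$, so $v \in H^i_n \cap H^j_m$ and $w \in \overline{H^i_n} \cap \overline{H^j_m}$; hence both $\overline{H^i_n}$ and $\overline{H^j_m}$ belong to the interval $[v,w]$, with $\overline{H^j_m} \subset \overline{H^i_n}$. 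Tight nestedness says that no half-space $M$ of $\ess_g$ satisfies $H^i_n \subset M \subset H^j_m$; taking complements, no half-space of $\ess_g$ lies strictly between $\overline{H^j_m}$ and $\overline{H^i_n}$, and in particular none of the half-spaces in $[v,w]$ does.

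Because the successive half-space crossings of a geodesic edge path from $v$ to $w$ can be prescribed to follow any linear extension of the inclusion order on $[v,w]$, and since $\overline{H^j_m} \subset \overline{H^i_n}$ with nothing of $[v,w]$ in between, there is such a geodesic along which $\partial H^i_n$ is crossed in the step immediately preceding the crossing of $\partial H^j_m$. Let $e$ and $e'$ be the two consecutive edges involved, meeting at the vertex $z$, with $e$ dual to $\partial H^i_n$ and $e'$ dual to $\partial H^j_m$; write $a$ for the other endpoint of $e$ and $c$ for the other endpoint of $e'$. Traversing the geodesic from $v$, we leave $H^i_n$ across $e$ and then leave $H^j_m$ across $e'$, so $a \in H^i_n$, $z \in \overline{H^i_n} \cap H^j_m$, and $c \in \overline{H^j_m}$. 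Now invoke the embedding $\ess_g \into \R^d$ of \propref{TautEmbedding}: $e$ is parallel to the $x_i$--axis across $\partial H^i_n$, so $a_i = n+1$, $z_i = n$ and $a_k = z_k$ for $k\neq i$; and $e'$ is parallel to the $x_j$--axis across $\partial H^j_m$, so $z_j = m+1$, $c_j = m$ and $z_k = c_k$ for $k \neq j$. Since $i \neq j$ this forces $a_j = z_j = m+1$ and $c_i = z_i = n$, so $p_{ij}$ carries $a$, $z$, $c$ to $(n+1, m+1)$, $(n, m+1)$, $(n,m)$ respectively. Hence $p_{ij}(e) = [n,n+1]\times\set{m+1}$ and $p_{ij}(e') = \set{n}\times[m,m+1]$ lie in $p_{ij}(\ess_g)$, as claimed.

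The main obstacle is the single input used in the second paragraph: that among geodesics from $v$ to $w$ one can be chosen crossing $\partial H^i_n$ and $\partial H^j_m$ in adjacent steps. This is exactly where the tight-nestedness hypothesis is needed; it is equivalent to the absence of any half-space (hence any hyperplane) strictly between $\partial H^i_n$ and $\partial H^j_m$, which via the standard correspondence between geodesics and linear extensions of the half-space poset (equivalently, via the fact that tightly nested hyperplanes have intersecting carriers) is precisely what lets the two crossings be made consecutive along one geodesic. Without tight nestedness a hyperplane would be forced between them, the carriers $N(\partial H^i_n)$ and $N(\partial H^j_m)$ would be disjoint, and the elbow would genuinely fail; everything past this point is routine bookkeeping with the coordinates supplied by the equivariant Euclidean embedding.
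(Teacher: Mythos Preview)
Your proof is correct and takes a genuinely different route from the paper's. The paper argues by exclusion using the Quadrant Lemma: if the square $[n-1,n]\times[m,m+1]$ is absent from $p_{ij}(\ess_g)$, then tight nestedness forces $H^j_m \subset H^i_{n-1}$, so the Quadrant Lemma produces a northwest quadrant disjoint from $\ess_g$; together with the southeast quadrant $Q(H^i_n,H^j_m)$, this leaves the edge $\{n\}\times[m,m+1]$ as the only possible passage across the strip $\R\times[m,m+1]$, and some geodesic (an axis, in the paper's phrasing) must use it. Your argument instead exhibits the elbow directly: you pick vertices $v\in H^i_n$, $w\in\overline{H^j_m}$, observe that $\overline{H^i_n}\supset\overline{H^j_m}$ is a cover relation in the poset $[v,w]$, and invoke the standard correspondence between geodesics and linear extensions to find a geodesic crossing $\partial H^i_n$ and $\partial H^j_m$ in consecutive steps. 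The two edges of that crossing project to the elbow.

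Your approach is more elementary in that it avoids the Quadrant Lemma entirely and does not use the existence of an axis (or indeed anything special about $\ess_g$); it works verbatim for any CAT(0) subcomplex whose Euclidean embedding induces a bijection on half-spaces, which the paper only obtains as a remark after the fact. The cost is that you import two facts not stated in the paper: that geodesics in an interval correspond to linear extensions of the half-space poset, and that any cover in a finite poset can be made adjacent in some linear extension (equivalently, that carriers of tightly nested hyperplanes intersect). Both are standard and easy, but you should cite or sketch them if this is meant to be self-contained within the paper's framework. The paper's proof, by contrast, is entirely internal to the toolkit just developed, and also serves as a first illustration of how the Quadrant Lemma constrains the shape of $p_{ij}(\ess_g)$.
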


  The two edges form an ``elbow'' at the corner of the quadrant $Q(H^i_n,
  H^j_m) = \set{ x_i > n, \ x_j < m+1}$ (and $\ess_g$ avoids this quadrant,
  by the Quadrant Lemma).

  \begin{proof}

    Designate $x_i$ as the horizontal coordinate. We consider the edge
    $\{n\} \times [m, m+1]$ (the other case being entirely similar). 

    If the square $[n-1,n] \times [m, m+1]$ is in $p_{ij}(\ess_g)$, then
    so is the edge $\{n\} \times [m, m+1]$ and we are done. If not, then
    the half-spaces $H^i_{n-1}$ and $H^j_m$ are nested in $\ess_g$. We
    cannot have $H^i_{n-1} \subset H^j_m$ in $\ess_g$, because $H^i_n
    \subset H^i_{n-1}$ and $H^i_n, H^j_m$ are tightly nested. Therefore,
    $H^j_m \subset H^i_{n-1}$ in $\ess_g$. By the Quadrant Lemma, the
    northwest quadrant generated by the square $[n-1,n] \times [m,m+1]$
    is disjoint from $p_{ij}(\ess_g)$. Similarly, since $H^i_n \subset
    H^j_m$, the southeast quadrant generated by the square $[n,
    n+1]\times [m, m+1]$ is also disjoint from $p_{ij}(\ess_g)$. The edge
    $\{n\} \times [m, m+1]$ now provides the only passage across the
    strip $\R \times [m, m+1]$. It must be in $p_{ij}(\ess_g)$, or
    $\ess_g$ could not contain an axis. \qedhere

  \end{proof}

  \begin{remark}  

    The Quadrant Lemma and the Elbow Lemma do not use the fact the
    embedding $\ess_g \into \R^d$ is equivariant. These results hold
    (with $\ess_g$ replaced with $Y$) whenever $Y$ is a convex
    subcomplex of a CAT(0) cube complex $X$ and there is a Euclidean
    embedding $Y \into \R^d$ that induces a bijection between
    $\half(Y)$ and $\half(\R^d)$. 

  \end{remark}
  
\section{RAAG-like actions on cube complexes} 

  \label{Sec:RAAG-like}

  Recall from \secref{Prelim} that every right-angled Artin group
  $A_{\Gamma}$ acts on a CAT(0) cube complex $X_{\Gamma}$, and that the
  oriented edges of $X_{\Gamma}$ admit an $A_{\Gamma}$--invariant labeling
  by the generators and their inverses. Also, there is an induced
  $A_{\Gamma}$--invariant labeling of the half-spaces of $X_{\Gamma}$. 

  As noted earlier, properties of the half-space labeling lead to
  many useful observations about $X_{\Gamma}$ and its 
  $A_{\Gamma}$--action. The definition below is based on some of these
  properties of $X_{\Gamma}$. 

  \begin{definition}\label{Def:raaglike}
    Let $X$ be a CAT(0) cube complex with an action by $G$. The action is
    \emph{RAAG-like} if it is without inversion and also satisfies: 
    \begin{enumerate}[label=\textup{(\roman*)}]
      \item there do not exist $H \in \half(X)$, $g \in G$
        with $H \trans gH$, \label{raag1}
      \item there do not exist tightly nested $H, H' \in
        \half(X)$, $g \in G$ with $H \trans gH'$, \label{raag2}
      \item there do not exist $H \in \half(X)$, $g \in
        G$ with $H$ and $g\bH$ tightly nested. \label{raag3}
    \end{enumerate}

  When the $G$--action on $X$ is understood, we may simply say that
  $X$ is RAAG-like. 

  \end{definition}

  \begin{remark} \label{Rem:Subdivision} 

    If one has a $G$--action on $X$ with an inversion, it is customary to
    perform a cubical subdivision to obtain an action without
    inversion. We note here that the resulting action will never be
    RAAG-like, since it will violate property
    \ref{Def:raaglike}\ref{raag3}. 

  \end{remark}

  \begin{lemma}

    For every simplicial graph $\Gamma$, the action of $A_{\Gamma}$ on
    $X_{\Gamma}$ is RAAG-like. 

  \end{lemma}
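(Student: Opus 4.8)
The plan is to verify the three conditions of \defref{raaglike} directly, exploiting the $A_\Gamma$--invariant labeling of the oriented edges and half-spaces of $X_\Gamma$ recalled in \secref{Prelim}. Three facts from there are used throughout: the action is without inversion; $gH$ and $H$ lie in the same $A_\Gamma$--orbit and so carry the same label for every $g$; and two half-spaces whose labels have the same underlying generator (in particular, two half-spaces in one orbit) are never transverse, since every $2$--cell of $X_\Gamma$ is labeled by a commutator $[v,w]$ with $v\neq w$. It is convenient to identify the vertices of $X_\Gamma$ with the elements of $A_\Gamma$, so that the action is by left translation and, for $a\in V(\Gamma)\cup V(\Gamma)^{-1}$, the oriented edge at a vertex $x$ with label $a$ runs from $x$ to $xa$ and is dual to the half-space containing $xa$. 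With these, condition \ref{Def:raaglike}\ref{raag1} is immediate: $H$ and $gH$ have the same label, hence are not transverse.

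The crux for \ref{Def:raaglike}\ref{raag2} and \ref{Def:raaglike}\ref{raag3} is a structural fact about a tightly nested pair $H\supsetneq H'$ in $X_\Gamma$. Writing $v=\text{label}(H)$ and $w=\text{label}(H')$, I claim there is a vertex $p\in H\cap\overline{H}'$ such that the edge $\{pv^{-1},p\}$ is dual to $\partial H$ (with $pv^{-1}\in\bH$), the edge $\{p,pw\}$ is dual to $\partial H'$ (with $pw\in H'$), these edges are distinct, and $pv^{-1},\,p,\,pw$ is a geodesic edge path. To prove this, note that the carriers $N(\partial H)$ and $N(\partial H')$ are convex subcomplexes of $X_\Gamma$ (by \cite[Theorem 2.12]{Haglund}) that cannot be separated by a hyperplane: a separating hyperplane would bound a half-space lying strictly between $H$ and $H'$, contradicting tight nestedness. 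Since disjoint convex subcomplexes of a CAT(0) cube complex are separated by a hyperplane, $N(\partial H)$ and $N(\partial H')$ share a vertex $p$; pick edges $e\ni p$ dual to $\partial H$ and $e'\ni p$ dual to $\partial H'$, distinct because $\partial H\neq\partial H'$. Each of $e,e'$ has one endpoint on each side of its hyperplane; using $H'\subsetneq H$ (so $\bH\subseteq\overline{H}'$) one rules out $p\in\bH$ and $p\in H'$ — in each case the length--$2$ geodesic through $p$ joining the other endpoints of $e$ and $e'$ would cross one of $\partial H,\partial H'$ while having both endpoints on the same side of it. Hence $p\in H\cap\overline{H}'$, and reading the labels off the orientations gives the description; the path is a geodesic since any length--$2$ edge path crossing two distinct hyperplanes is one.

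Granting this, \ref{Def:raaglike}\ref{raag3} follows quickly. If $H$ and $g\bH$ were tightly nested, then, after replacing $(H,g)$ by $(g\bH,g^{-1})$ if necessary, we may assume $H\supsetneq g\bH$, equivalently $gH\supsetneq\bH$, which is again tightly nested. Applying the structural fact to $gH\supsetneq\bH$, whose labels are $\text{label}(gH)=\text{label}(H)=v$ and $\text{label}(\bH)=v^{-1}$, produces the ``geodesic'' $pv^{-1},\,p,\,pv^{-1}$ with coinciding endpoints — impossible. For \ref{Def:raaglike}\ref{raag2}, suppose $H\supsetneq H'$ are tightly nested and $H\trans gH'$ for some $g$. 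A square realizing $H\trans gH'$ is labeled by a commutator, so the underlying generators of $v=\text{label}(H)$ and $w=\text{label}(H')=\text{label}(gH')$ are distinct and commute in $A_\Gamma$. Take $p$ and the geodesic $pv^{-1},p,pw$ from the structural fact. Since the underlying generators of $v^{-1}$ and $w$ are distinct and commuting, the edges at $p$ labeled $v^{-1}$ and $w$ span a square $S$ in $X_\Gamma$ with vertices $p,\ pv^{-1},\ pw,\ q:=pv^{-1}w=pwv^{-1}$; in $S$ the edge $\{pw,q\}$ is parallel to (hence dual to the same hyperplane as) the $\partial H$--edge, and $\{pv^{-1},q\}$ is dual to $\partial H'$. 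Then $pw\in H'\subseteq H$ and $\{pw,q\}$ dual to $\partial H$ force $q\in\bH\subseteq\overline{H}'$, while $pv^{-1}\in\bH\subseteq\overline{H}'$ and $\{pv^{-1},q\}$ dual to $\partial H'$ force $q\in H'$ — a contradiction.

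The one genuinely nontrivial ingredient is the structural fact, and inside it the statement that the carriers of tightly nested hyperplanes intersect; this is where the separation property of CAT(0) cube complexes does the work, and it is where I expect any real difficulty to lie. Everything after that — the case analysis pinning down $p\in H\cap\overline{H}'$, the identification of dual edges in the square $S$, and the two short label computations — is routine bookkeeping with the edge-labeling of $X_\Gamma$.
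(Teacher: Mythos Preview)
Your proof is correct and follows essentially the same strategy as the paper's. Both hinge on the same structural fact---that a tightly nested pair $H \supsetneq H'$ admits dual edges meeting at a common vertex---and both deduce \ref{raag2} and \ref{raag3} from it via the edge-labeling. The differences are cosmetic: the paper simply asserts the structural fact (``there is a vertex $x$ and a pair of edges $e,e'$ incident to $x$ \ldots''), whereas you supply a proof using carriers and hyperplane separation; and for \ref{raag2} the paper argues directly (the labels at the common vertex do not commute, hence $H$ and $gH'$ can never cross), while you run the contrapositive (assume $H \trans gH'$, deduce the labels commute, build the square $S$ at $p$, and reach $H \trans H'$). Your detour through the vertex $q$ in the square $S$ is unnecessary---the mere existence of $S$ already exhibits $\partial H$ and $\partial H'$ crossing, contradicting nestedness---but it is not wrong.
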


  \begin{proof}

    We have already observed in \secref{Prelim} that $A_{\Gamma}$ acts
    without inversion on $X_{\Gamma}$. We have also observed that since 
    boundaries of squares in $X_{\Gamma}$ are labeled by commutators
    $[v,w]$ with $v \not= w$, no two half-spaces in $X_{\Gamma}$ with the
    same label can cross. Property \ref{raag1} follows immediately. 

    For \ref{raag2}, suppose $H, H'$ are tightly nested half-spaces in
    $X_{\Gamma}$. Then there is a vertex $x \in X_{\Gamma}$ and a pair of
    edges $e, e'$ both incident to $x$, such that $e$ is dual to $H$
    and $e'$ is dual to $H'$ (modulo orientations). Since $H$ and $H'$
    do not cross, the edges $e$ and $e'$ are not in the boundary of a
    common square; hence their labels do not commute in
    $A_{\Gamma}$. It follows that no two half-spaces bearing these
    labels (or their inverses) can cross. In particular, $H$ and $gH'$
    cannot cross for any $g \in A_{\Gamma}$. 

    For \ref{raag3}, suppose $H$ and $g\bH$ are tightly nested for some
    $H \in \half(X_{\Gamma})$, $g \in A_{\Gamma}$. Switching $H$ and
    $\bH$ if necessary, we may assume that $H \subset g\bH$. Since they
    are tightly nested, there is a pair of (oriented) edges $e, e'$ with common 
    initial vertex $x$ such that $e$ is dual to $H$ and $e'$ is dual to
    $gH$. Then $e$ and $e'$ bear the same label $v$, since the half-space
    labeling is $A_{\Gamma}$--invariant. However, vertices in
    $X_{\Gamma}$ have exactly one edge incident to them with any given 
    label (being lifts of the same oriented edge of $K(A_{\Gamma},1)$ at the same
    initial vertex). This contradiction establishes property
    \ref{raag3}. \qedhere 

  \end{proof}

  \begin{remark} \label{Rem:Special} 

    The properties of \defref{raaglike} correspond precisely to the
    defining properties of \emph{special cube complexes} due to 
    Haglund and Wise \cite{HaglundWise}, as enumerated in \cite{Wise}. More
    specifically, if $G$ acts freely on a CAT(0) cube complex $X$, then
    the action is RAAG-like if and only if $X/G$ is special. 

    The properties correspond as follows. Property \ref{raag1} means that
    immersed hyperplanes in $X/G$ are embedded (and hence can simply be
    called hyperplanes). $G$ acting on $X$ without inversion means that
    all hyperplanes in $X/G$ are two-sided. Property \ref{raag2} means
    that pairs of hyperplanes in $X/G$ do not inter-osculate. Property
    \ref{raag3} means that hyperplanes in $X/G$ do not self-osculate. 

  \end{remark}

  \begin{remark} \label{Rem:NonTransverse} 

    Note that \defref{raaglike}\ref{raag1} in particular means that
    the action of $G$ on $X$ is non-transverse. Therefore, for any
    hyperbolic element $g \in G$, the action of $\gen{g}$ on $\ess_g$
    is non-transverse. Hence, by \propref{Equality}, $\ess_g =
    \mess_g$ for all hyperbolic elements $g \in G$.

  \end{remark}

  \section{Tightly nested segments in the essential characteristic set}
  
  \label{Sec:TightlyNested}

  In \secref{Quadrants}, we presented some general tools for studying
  equivariant Euclidean embeddings of $\ess_g$. Here we develop more
  specialized results to be used in proving the main theorem. Generally
  speaking, these results deal with situations where there is a tightly
  nested segment $\sigma \subset \axis_g^+$ in one coordinate direction
  $\half^i$, and an element $f \in G$ such that $f\bsigma \subset
  \axis_g^+$. 
  
  For the rest of this section and the next section, we will assume that
  $X$ is a CAT(0) cube complex with a RAAG-like $G$--action. 

  Fix a hyperbolic element $g \in G$ and apply \propref{TautEmbedding} to
  obtain a taut $\gen{g}$--equivariant embedding $\ess_g \into \R^d$.
  Recall that a cube of maximal dimension $C \subset
  \ess_g$ is mapped to $[0,1]^d 
  \subset \R^d$, and we identify $\axis_g^+$ with $\half^+(\R^d) =
  \half^1\sqcup\cdots\sqcup \half^d$. The set of half-spaces in $\axis_g^+$
  dual to $C$ is denoted $A$, and $[A,gA) \cap \half^1 =
  \set{H_0^1,\ldots,H_n^1}$ is a tightly nested segment in $\axis_g^+$.
  Since $\gen{g}$ acts non-transversely on $\ess_g$, we also have $[A, gA)
  = [o, go]$, where $o$ denotes the origin in $\R^d$. 

  \begin{remark}\label{Rem:ExtendedQuad} 

    Since the action is assumed to be RAAG-like, property
    \ref{Def:raaglike}\ref{raag1} implies that if $H\in \half^i$ and $hH
    \in \half^j$ with $i \not= j$ for some $H \in \axis_g^+$, $h \in G$,
    then the quadrant $Q(H, hH)$ exists. Property
    \ref{Def:raaglike}\ref{raag2} implies that if in addition $H$ and $
    H' \in \half^i$ are tightly nested in $\ess_g$, then the quadrant
    $Q(H', hH)$ exists. (Recall that, by definition, $Q(H,K)$ is always
    disjoint from $\ess_g$.)  

    When discussing a quadrant $Q$ of the form $Q(H, hH)$, if $H, H' \in
    \half^i$ are tightly nested, the quadrant $Q(H', hH)$ faces the same
    way as $Q$ by \lemref{SameWay}. It either properly contains $Q$ or is
    properly contained in $Q$. If the former occurs, we may refer to
    $Q(H', hH)$ as an \emph{extended quadrant} for $Q$. 

  \end{remark}

    The first two results below will be used to generate
    contradictions. The first lemma states that when a certain
    configuration occurs (involving both a northwest and southeast
    quadrant) then there is a tightly nested segment that is forced to
    overlap with a copy of its reverse. The second lemma says that
    the latter event is impossible. Several of the arguments in the remainder of
    the paper have the goal of showing that a quadrant faces a
    particular way (northwest or southeast), with the aim of creating the
    forbidden configuration, and thereby a contradiction. 

  \begin{lemma} \label{Lem:ForcingOverlap}
    
    Let $\sigma=\set{K_0, \dotsc, K_m} \subset \half^i$ be tightly nested
    in $\ess_g$ and suppose that $f \bsigma \subset \axis_g^+$ for some $f \in
    G$. Let $x_i$ be horizontal. Suppose there exist $j \leq j'$ such that
    $f\bK_{j}, f\bK_{j'} \notin \half^i$ and $Q(K_{j},f\bK_{j})$ faces
    northwest while $Q(K_{j'}, f\bK_{j'})$ faces southeast. Then there is a
    non-trivial subsegment $\alpha \subset \sigma$ such that $f\balpha
    \subset \half^i$ and $\alpha, f\balpha$ overlap. 

  \end{lemma}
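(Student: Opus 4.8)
The plan is to reduce the statement to a combinatorial fact about the half-spaces $K_\ell$ and $f\bK_\ell$ and the partial order they inherit from $X$, using the RAAG-like hypotheses to decide which pairs are transverse.

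First I would fix notation: write $\sigma$ as $K_0\supset\dotsb\supset K_m$, so $f\bsigma$ is $f\bK_m\supset\dotsb\supset f\bK_0$, with consecutive terms tightly nested and all terms in $\axis_g^+$. Since $G$ acts without inversion, $f\bK_\ell\ne K_\ell$ for every $\ell$ (otherwise $f$ would carry the half-space $\bK_\ell$ to its complement). The preliminary observation is that, for consecutive indices, each of $K_\ell,K_{\ell+1}$ is comparable to each of $f\bK_\ell,f\bK_{\ell+1}$: transversality is unaffected by passing to complements, so $K_a\trans f\bK_b$ iff $K_a\trans fK_b$, and property \ref{Def:raaglike}\ref{raag1} (when $a=b$) and property \ref{Def:raaglike}\ref{raag2} together with the tight nesting of $K_\ell,K_{\ell+1}$ (when $a\ne b$) rule this out, so two such half-spaces, lying in $\axis_g^+$ and not transverse, are nested by \remref{TransNest}. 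In particular each $K_\ell$ either strictly contains or is strictly contained in $f\bK_\ell$, and by the Quadrant Lemma (cases (2) and (3)), with $x_i$ horizontal, $Q(K_\ell,f\bK_\ell)$ faces northwest exactly when $K_\ell\supsetneq f\bK_\ell$ and southeast exactly when $K_\ell\subsetneq f\bK_\ell$.

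Set $s(\ell)=+1$ if $K_\ell\supsetneq f\bK_\ell$ and $s(\ell)=-1$ if $K_\ell\subsetneq f\bK_\ell$. By hypothesis and the translation above, $s(j)=+1$ and $s(j')=-1$ with $j\le j'$, so there is $\ell_0$ with $j\le\ell_0<j'$, $s(\ell_0)=+1$ and $s(\ell_0+1)=-1$. I would then examine the four half-spaces $K_{\ell_0},K_{\ell_0+1},f\bK_{\ell_0},f\bK_{\ell_0+1}$, which are pairwise comparable by the preliminary step and so form a chain. From $K_{\ell_0}\supsetneq K_{\ell_0+1}$, $K_{\ell_0}\supsetneq f\bK_{\ell_0}$ (since $s(\ell_0)=+1$), $f\bK_{\ell_0+1}\supsetneq f\bK_{\ell_0}$ and $f\bK_{\ell_0+1}\supsetneq K_{\ell_0+1}$ (since $s(\ell_0+1)=-1$), both $K_{\ell_0}$ and $f\bK_{\ell_0+1}$ strictly contain both $K_{\ell_0+1}$ and $f\bK_{\ell_0}$; hence $\set{K_{\ell_0},f\bK_{\ell_0+1}}$ are the two largest and $\set{K_{\ell_0+1},f\bK_{\ell_0}}$ the two smallest elements of this chain. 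If the top pair were distinct, the smaller of the two would lie strictly between a tightly nested pair — between $K_{\ell_0}$ and $K_{\ell_0+1}$ if it is $f\bK_{\ell_0+1}$, and between $f\bK_{\ell_0+1}$ and $f\bK_{\ell_0}$ if it is $K_{\ell_0}$ — which is impossible; by the symmetric argument the bottom pair also coincides. Therefore $K_{\ell_0}=f\bK_{\ell_0+1}$ and $K_{\ell_0+1}=f\bK_{\ell_0}$, so $f\bK_{\ell_0},f\bK_{\ell_0+1}\in\half^i$, and $\alpha=\set{K_{\ell_0},K_{\ell_0+1}}$ is a subsegment of $\sigma$ with $f\balpha=\set{f\bK_{\ell_0+1},f\bK_{\ell_0}}=\alpha\subset\half^i$; in particular $\alpha$ and $f\balpha$ overlap.

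The step I expect to be the main obstacle is this chain analysis of the four half-spaces: one must keep track of which inclusions are strict, identify the top and bottom pairs correctly, and apply the ``strictly between a tightly nested pair'' contradiction at the level of $X$ (which is legitimate because all four half-spaces lie in $\axis_g^+$, so tight nesting in $\ess_g$ and in $X$ coincide, by \remref{Square}). The degenerate cases in which the chain has fewer than four distinct elements are absorbed by phrasing the dichotomy as ``the top pair is distinct, or the bottom pair is distinct,'' each of which yields a contradiction.
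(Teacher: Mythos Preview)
Your argument is correct and takes a genuinely different route from the paper's proof. The paper proceeds geometrically: it first passes to an innermost pair $j,j'$, sets $\alpha=\set{K_{j+1},\dotsc,K_{j'-1}}$, and then uses the Elbow Lemma applied to the tightly nested pairs $\set{f\bK_{j'-1},f\bK_{j'}}$ and $\set{f\bK_j,f\bK_{j+1}}$ to locate the endpoints of $f\balpha$ along the $x_i$--axis. Comparing these positions with the extended quadrants $Q(K_{j'-1},f\bK_{j'})$ and $Q(K_{j+1},f\bK_j)$ yields the numerical inequalities that force $\alpha$ and $f\balpha$ to overlap.

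Your approach bypasses the Elbow Lemma and the Euclidean coordinates entirely. You work directly in the partial order on $\axis_g^+$: properties \ref{Def:raaglike}\ref{raag1} and \ref{Def:raaglike}\ref{raag2} make $K_a$ and $f\bK_b$ comparable whenever $\abs{a-b}\le 1$, so the sign function $s(\ell)$ is defined for all indices, and a sign change at some $\ell_0$ forces the four half-spaces $K_{\ell_0},K_{\ell_0+1},f\bK_{\ell_0},f\bK_{\ell_0+1}$ into a chain whose tight-nesting constraints collapse it to two elements. You obtain the sharper conclusion $f\balpha=\alpha$ (with $\abs{\alpha}=2$), which not only gives overlap but in fact directly violates property \ref{Def:raaglike}\ref{raag3} without appealing to \lemref{NoTouching}. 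The paper's argument, by contrast, produces a possibly longer $\alpha$ and only an overlap rather than equality; what it buys is that the same Elbow-Lemma machinery is reused in the quadrant-shifting lemmas that follow, so the geometric viewpoint is developed once and applied uniformly.
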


  \begin{proof}

    First note that if $j = j'$ then $\ess_g$ avoids both of the quadrants
    \[ \set{x_i < n+1, x_j > m} \quad \text{and} \quad \set{x_i > n, x_j <
    m+1} \] for some $n, m \in \Z$. But then $\ess_g$ avoids the set $\set{
    n < x_i < n+1}$ and cannot contain an axis for $g$. Thus $j < j'$. 

    For any index ${k}$, the quadrant $Q(K_{{k}}, f\bK_{{k}})$ is
    defined if and only if $f\bK_{{k}} \not\in \half^i$, by
    \remref{ExtendedQuad}. We may choose $j, j'$ to be an
    \emph{innermost} pair having the stated properties. Then, for any
    ${k}$ between $j$ and $j'$, we have $f\bK_{{k}} \in \half^i$. 

    Since $K_{j'-1}, K_{j'}$ are tightly nested there is an extended
    quadrant $Q(K_{j'-1}, f\bK_{j'})$ which faces southeast (cf.
    \remref{ExtendedQuad}). There is also an extended northwest quadrant
    $Q(K_{j+1}, f\bK_j)$, since $K_j, K_{j+1}$ are tightly nested. 

    If $j' = j+1$ then $f\bK_j$ and $f\bK_{j'}$ are tightly nested and
    \lemref{SameWay} says that both quadrants $Q(K_{j},f\bK_{j})$ and
    $Q(K_{j}, f\bK_{j'}) = Q(K_{j'-1}, f\bK_{j'})$ face the same way.
    However, these face northwest and southeast respectively. Therefore,
    $j' > j+1$ and the segment $\alpha = \set{K_{j+1}, \dotsc, K_{j'-1}}$
    is non-trivial. 

    Note that $f \balpha \subset \half^i$ by the choice of $j, j'$. We
    proceed now to use the Elbow Lemma (\ref{Lem:Elbow}) to constrain the
    location of $f\balpha$ along $\half^i$. In coordinates we
    have $K_j = H^i_a$ and $K_{j'} = H^i_b$ for some integers $a < b$, and
    \[ \alpha = \set{K_{j+1},\ldots,K_{j'-1}} = \set{ H^i_{a+1}, \dots,
    H^i_{b-1}}.\] Write $f\balpha = \set{H^i_c,  \dots, H^i_{c+\abs{\alpha}
    - 1} }$ for some $c \in \Z$.  

    Let $f\bK_{j'} = H^{i'}_e \in \half^{i'}$ where $i' \not= i$ and $e \in
    \Z$. Applying the Elbow Lemma to the tightly nested pair $\set{
    f\bK_{j'-1}, f\bK_{j'}} = \set{ H^i_c, H^{i'}_e}$, we find that the
    edge $ \set{c} \times [e, e+1]$ lies in $p_{i i'}(\ess_g)$. Since
    $\ess_g$ avoids the quadrant $Q(K_{j'-1}, f\bK_{j'}) = \set{ x_i > b -
    1, \ x_{i'} < e+1}$, we conclude that $c \leq b - 1$. See
    \figref{Overlap}. 

    \begin{figure}[htp!]
    \begin{center}
    \includegraphics{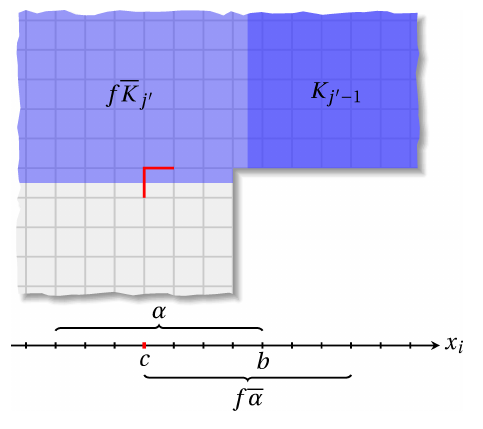}
    \end{center}
    \caption{The vertical position of the elbow is aligned with the
      top of the quadrant $Q(K_{j'-1},f\bK_{j'})$ as shown. The
      horizontal position is aligned with the left end of $f
      \balpha$. Since the elbow is outside the quadrant, $f\balpha$
      cannot be entirely to the right of $\alpha$.} \label{Fig:Overlap}
    \end{figure}

    Now redefine $i'$ and $e$ such that $f\bK_j = H^{i'}_e \in \half^{i'}$
    (with $i' \not= i$). Applying the Elbow Lemma to the tightly nested
    pair $\set{f\bK_j, f\bK_{j+1}} = \set{ H^{i'}_e,
    H^i_{c+\abs{\alpha}-1} }$, we find that $p_{ii'}(\ess_g)$ contains the
    edge $\set{c+\abs{\alpha}} \times [e, e+1]$. Now $\ess_g$ avoids the
    quadrant $Q(K_{j+1}, f\bK_j) = \set{x_i < a+2 , \ x_{i'} > e }$, and
    therefore $c+\abs{\alpha} \geq a+2$. 

    The inequalities $c \leq b - 1$ and $c + \abs{\alpha} \geq a+2$ say
    precisely that $\alpha$ and $f\balpha$ overlap. \qedhere

  \end{proof}
  
  The conclusion of the preceding lemma leads directly to a
  contradiction: 

  \begin{lemma} \label{Lem:NoTouching}
    
    Let $\alpha \subset \half^i$ be tightly nested in $\ess_g$ and
    suppose that $h \balpha \subset \half^i$ for some $h\in
    G$. Then $\alpha$ and $h\balpha$ cannot overlap. 

  \end{lemma}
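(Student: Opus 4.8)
The plan is to argue by contradiction: from an overlap of $\alpha$ and $h\balpha$ I would extract either a half-space that $h$ inverts (forbidden, since a RAAG-like action is without inversion) or a tightly nested pair $H$, $h\bH$ (forbidden by \ref{Def:raaglike}\ref{raag3}). First I would fix a normal form for the two segments. Since every half-space of $\alpha$ and of $h\balpha$ lies in the single coordinate family $\half^i$, no two of them are transverse in $\R^d$, hence none are transverse in $\ess_g$ by \remref{Square}(a); so $\alpha$ and $h\balpha$ can overlap only if $\alpha\cap h\balpha\neq\emptyset$. Next, a chain in $\half^i$ that is tightly nested in $\ess_g$ must occupy consecutive lattice positions: if $H^i_p\supset H^i_{p'}$ were consecutive members with $p'\geq p+2$, then $H^i_{p+1}\in\axis_g^+$ would satisfy $H^i_p\supset H^i_{p+1}\supset H^i_{p'}$ in $\ess_g$ (intersecting with $\ess_g$ keeps these containments strict, since $\phi_*$ is a bijection and the three half-spaces are distinct), contradicting tight nesting. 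This applies to $\alpha$ directly; it applies to $h\balpha$ after noting that $\balpha$ is a segment (the inverse of $\alpha$), that $h$ is an automorphism of $X$, and that \remref{Square}(c) transfers ``tightly nested'' between $X$ and $\ess_g$ for half-spaces of $\axis_g^+$. So I may write $\alpha=\set{H^i_a\supset\dots\supset H^i_b}$ and $h\balpha=\set{H^i_c\supset\dots\supset H^i_d}$ with $d-c=b-a$.

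Then I would use that $h$ is an order-preserving bijection of $\half(X)$ carrying the chain $\balpha$, namely $\overline{H^i_b}\supset\dots\supset\overline{H^i_a}$, term by term onto the chain $H^i_c\supset\dots\supset H^i_d$ underlying $h\balpha$; this forces $h(\overline{H^i_m})=H^i_{(b+c)-m}$ for every $m\in[a,b]$. Now suppose $\alpha$ and $h\balpha$ overlap, at $H^i_k$ with $k\in[a,b]\cap[c,d]$. Since $d=c+(b-a)$, combining $k\leq b$ with $a\leq k\leq c+(b-a)$ gives $2a-b\leq c\leq b$, so the midpoint $(b+c)/2$ lies in $[a,b]$. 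If $b+c$ is even, set $m^\ast=(b+c)/2\in[a,b]$; then $h(\overline{H^i_{m^\ast}})=H^i_{m^\ast}$, so $h$ sends the half-space $\overline{H^i_{m^\ast}}$ to its complement, i.e.\ $h$ acts with inversion on $\partial H^i_{m^\ast}$, contradicting that the $G$--action is without inversion. If $b+c$ is odd, set $m^\ast=(b+c-1)/2$ and $m^{\ast\ast}=m^\ast+1$; the same bounds give $m^\ast,m^{\ast\ast}\in[a,b]$, so $H^i_{m^\ast}$ and $H^i_{m^{\ast\ast}}$ are consecutive members of $\alpha$, hence tightly nested in $\ess_g$ and therefore in $X$ (\remref{Square}(c)). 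Since $h(\overline{H^i_{m^{\ast\ast}}})=H^i_{m^\ast}$, taking $H=H^i_{m^{\ast\ast}}$ exhibits $H$ and $h\bH$ as tightly nested, contradicting \ref{Def:raaglike}\ref{raag3}. Either way we reach a contradiction, so $\alpha$ and $h\balpha$ cannot overlap.

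I expect the main obstacle to be the step in the first paragraph identifying the normal form: one must recognize that tight nesting \emph{in $\ess_g$} (where the ambient order on $\half^i$ coincides with the order in $\R^d$, even though, per \remref{Square}(c), there is in general no relation between tight nesting in $X$ and in $\R^d$) is precisely what pins both $\alpha$ and $h\balpha$ to consecutive lattice positions in the $x_i$--direction. Once that normal form is in hand, the action of $h$ between the two runs is forced to be a reflection, and the center of that reflection produces the inversion or the forbidden tightly nested pair almost for free; the remaining inequalities bounding $c$ are routine bookkeeping.
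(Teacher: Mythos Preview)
Your proof is correct and follows essentially the same approach as the paper's: both recognize that $h$ acts on the indices in $\half^i$ as a reflection $m\mapsto(\text{const})-m$, and use the center of this reflection to produce either an inversion (even case) or a tightly nested pair $H$, $h\bH$ violating property~\ref{Def:raaglike}\ref{raag3} (odd case). Your version is more detailed---you explicitly justify why both $\alpha$ and $h\balpha$ occupy consecutive lattice positions in $\half^i$ and why ``overlap'' reduces to nonempty intersection---whereas the paper's proof takes these points for granted.
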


  \begin{proof}

    Write $\alpha = \set{H_a, \dotsc, H_{a+k}}$ and $h \balpha =
    \set{H_{b-k}, \dotsc, H_{b}}$ for some $a, b \in \Z$. Then, $h
    H_{a+j} = \bH_{b-j}$ for each 
    $j$. The transformation $a+j \mapsto b-j$ either fixes $c$ or
    exchanges $c$ and $c+1$, for some $c\in \Z$. If $\alpha$ and $h
    \balpha$ overlap then $H_c$ (and $H_{c+1}$ in the second case) are in
    $\alpha \cap h\balpha$. In the first case $h$ inverts $H_c$, contrary
    to the assumption that $G$ acts on $X$ without inversion. In the
    second case $h\bH_{c} = H_{c+1}$, violating property
    \ref{Def:raaglike}\ref{raag3}. Thus $\alpha$ and $h\balpha$ cannot
    overlap. \qedhere 

  \end{proof}

  The next results perform a technical step that will be used repeatedly in
  the course of proving the main theorem. They also yield corollaries
  showing that under certain conditions, quadrants face in particular
  directions. 

  \begin{lemma}[Southeast quadrant shifting] \label{Lem:LeftQuad}

    Let $\sigma=\set{K_0, \dotsc, K_m} \subset \half^i$ be
    tightly nested in $\ess_g$ and suppose there is an $f\in G$ such that $f
    \bsigma \subset \axis_g^+$ and $f\bsigma \not\subset \half^i$. Let
    $x_i$ be horizontal and let $k$ be the smallest index such that $f
    \bK_k \not\in \half^i$. 
    Suppose the quadrant $Q(K_k, f\bK_k)$ faces southeast, so that 
    \[ 
      Q(K_k, f\bK_k) \ = \ \set{x_i > a + k, \ x_j < b+1}
    \]
    for some $j \not= i$, $a, b\in \Z$. Then 
    \begin{enumerate}[label=\textup{(\arabic*)}]
      \item $\ess_g$ also avoids the larger quadrant 
        $Q \ = \ \set{ x_i > a - k, \ x_j < b+1}.$ \label{leftquad1} 
      \item If\/ $k > 0$ then $f\bK_0 \in \half^i$ and
        $K_0 \subset f \bK_0$. \label{leftquad2}
    \end{enumerate}

  \end{lemma}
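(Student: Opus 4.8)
The plan is to prove part \ref{leftquad2} first and then read off part \ref{leftquad1}. If $k=0$ there is nothing to prove: \ref{leftquad2} is vacuous and the quadrant $Q$ in \ref{leftquad1} is exactly $Q(K_0,f\bK_0)$, which is disjoint from $\ess_g$ by definition. So assume $k>0$.

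First I would pin down coordinates. Because $\sigma\subset\half^i$ is tightly nested in $\ess_g$, consecutive $K_\ell$'s must occupy consecutive positions along $\half^i$ (a gap would leave a half-space $H^i_n$ of $\ess_g$ lying strictly between them), so $K_\ell=H^i_{a+\ell}$, the constant being forced by the hypothesis $Q(K_k,f\bK_k)=\set{x_i>a+k,\ x_j<b+1}$; in particular $f\bK_k=H^j_b$ and $K_k\subset f\bK_k$ in $\ess_g$. By minimality of $k$ we have $f\bK_\ell\in\half^i$ for every $\ell<k$, and since $\set{f\bK_{k-1},\dotsc,f\bK_0}$ is a subsegment of the segment $f\bsigma$ lying in $\half^i$, it too is a run of consecutive half-spaces; writing $f\bK_0=H^i_p$ we get $f\bK_\ell=H^i_{p-\ell}$ for $0\le\ell\le k-1$. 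Since $f\bK_0\in\half^i$, part \ref{leftquad2} is now exactly the assertion $p<a$, so everything reduces to placing the run $\set{H^i_{p-k+1},\dotsc,H^i_p}$ relative to the run $\set{H^i_a,\dotsc,H^i_{a+k-1}}$.

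I would extract this from \lemref{NoTouching}, applied to the subsegment $\alpha=\set{K_0,\dotsc,K_{k-1}}$ of $\sigma$ and to $h=f$: here $f\balpha=\set{f\bK_{k-1},\dotsc,f\bK_0}\subset\half^i$, so $\alpha$ and $f\balpha$ do not overlap, and since both lie in $\half^i$ no two of their half-spaces are transverse; hence their index runs $\set{a,\dotsc,a+k-1}$ and $\set{p-k+1,\dotsc,p}$ are disjoint, forcing $p\le a-1$ or $p\ge a+2k-1$. The work is in excluding the second alternative. If $p\ge a+2k$ then $f\bK_{k-1}=H^i_{p-k+1}$ has strictly larger coordinate than $K_k=H^i_{a+k}$, so $f\bK_{k-1}\subset K_k$ in $\ess_g$; together with $K_k\subset f\bK_k$ this gives $f\bK_{k-1}\subset K_k\subset f\bK_k$ in $\ess_g$, contradicting that $f\bK_{k-1}$ and $f\bK_k$ are consecutive, hence tightly nested, in $f\bsigma$. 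In the remaining case $p=a+2k-1$ one computes $f\bK_{k-1}=H^i_{a+k}=K_k$; but $K_{k-1}$ and $K_k$ are consecutive in $\sigma$ (tightly nested in $X$ by \remref{Square}(c)), so with $H=K_{k-1}$ the half-spaces $H$ and $f\bH=f\bK_{k-1}=K_k$ are tightly nested, which is forbidden by property \ref{Def:raaglike}\ref{raag3}. Therefore $p\le a-1$, i.e. $K_0=H^i_a\subset H^i_p=f\bK_0$ in $\ess_g$, which is \ref{leftquad2}.

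Part \ref{leftquad1} then follows at once: $f\bK_{k-1}=H^i_{p-k+1}$ and $f\bK_k=H^j_b$ are tightly nested in $\ess_g$ with $f\bK_{k-1}\subset f\bK_k$, so by \lemref{Quadrant} (case 3, with $x_i$ horizontal) $\ess_g$ avoids the southeast quadrant $Q(f\bK_{k-1},f\bK_k)=\set{x_i>p-k+1,\ x_j<b+1}$; and $p\le a-1$ gives $p-k+1\le a-k$, so this quadrant contains $Q=\set{x_i>a-k,\ x_j<b+1}$, whence $\ess_g$ avoids $Q$. I expect the only delicate point to be the two-case argument ruling out $p\ge a+2k-1$ in the \lemref{NoTouching} step: recognizing that the boundary value $p=a+2k-1$ is exactly the configuration forbidden by property \ref{Def:raaglike}\ref{raag3}, while $p\ge a+2k$ is killed purely by tight nesting of $f\bsigma$.
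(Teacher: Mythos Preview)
Your proof is correct, and the concluding step---deducing \ref{leftquad1} from the southeast quadrant $Q(f\bK_{k-1},f\bK_k)$---is exactly how the paper finishes. The difference is in how you establish $p\le a-1$ (equivalently, $c\le a$ in the paper's notation). The paper first uses the Elbow Lemma on the tightly nested pair $\set{f\bK_{k-1},f\bK_k}$ to produce an edge of $p_{ij}(\ess_g)$ at $x_i=p-k+1$; since this edge must avoid the extended quadrant $Q(K_{k-1},f\bK_k)$ (which exists by property~\ref{Def:raaglike}\ref{raag2}), one gets $p-k+1\le a+k-1$, i.e.\ $p\le a+2k-2$, ruling out the entire ``right'' alternative in one stroke before invoking \lemref{NoTouching}. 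You instead invoke \lemref{NoTouching} first and then dispatch the ``right'' alternative by a two-case argument: $p\ge a+2k$ contradicts tight nesting of $f\bK_{k-1},f\bK_k$, while the boundary case $p=a+2k-1$ forces $f\bK_{k-1}=K_k$ and is killed by property~\ref{Def:raaglike}\ref{raag3}. So you trade the Elbow Lemma and property~\ref{raag2} for a direct order-theoretic contradiction and property~\ref{raag3}; both routes are short, and it is pleasant to see that the lemma does not actually require the Elbow Lemma.
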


  \begin{proof}

    If $k=0$ then $Q = Q(K_k, f\bK_k)$ and there is nothing to prove, so
    assume that $k > 0$.  
    It is implicit from the description of $Q(K_k, f\bK_k)$ that $K_k =
    H^i_{a+k}$ and $f\bK_k = H^j_b$. Let $\alpha = \set{K_0, \dotsc,
      K_{k-1}}$ be the initial segment of $\sigma$ before $K_k$ and note
    that $f \balpha \subset \half^i$. Writing $f \balpha = \set{H^i_{c-k},
      \dotsc, H^i_{c-1}}$ for the appropriate $c \in \Z$, we have
    $f\bK_{k-1} = H^i_{c-k}$. 

    Applying the Elbow Lemma to the tightly nested pair $\set{ f
      \bK_{k-1}, f \bK_k} = \set{H^i_{c-k}, H^j_b}$, we find that
    $p_{ij}(\ess_g)$ contains the edge $e = \set{c-k} \times [b, b+1]$. 

    Since $e$ avoids the quadrant $Q(K_k, f\bK_k)$, we must have $c-k
    \leq a+k$. In fact, $e$ avoids the extended quadrant $Q(K_{k-1},
    f\bK_k) = \set{x_i > a + k - 1, \ x_j < b+1}$ by
    \remref{ExtendedQuad}, and so $c-k < a+k$. Thus $f \balpha$
    cannot be entirely to the right of $\alpha$ in $\half^i$. By
    \lemref{NoTouching} $f\balpha$ cannot overlap with $\alpha$ and so
    it must lie entirely to its left. That is, $c \leq a$. See 
    \figref{Shifted}. 

    \begin{figure}[htp!]
    \begin{center}
    \includegraphics{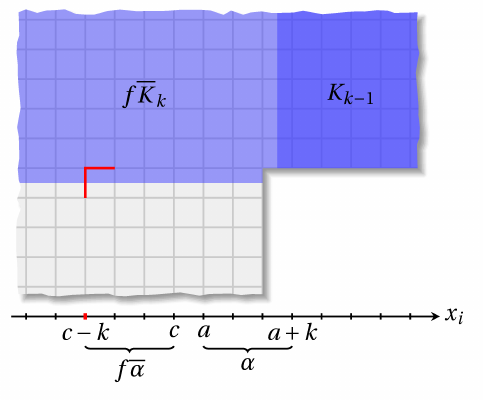}
    \end{center}
    \caption{The elbow is aligned with the left endpoint of
      $f\balpha$ and the top side of the quadrant $Q(K_{k-1},
      f\bK_k)$. Hence $f\balpha$ is not to the right of $\alpha$. Since
      $\alpha$ and $f\balpha$ cannot overlap, we have $c \leq a$. The
      elbow now generates a larger, ``shifted'' quadrant.} \label{Fig:Shifted}
    \end{figure}

    Now note that the quadrant generated by $f \bK_{k-1}$ and
    $f\bK_k$ (and avoided by $\ess_g$) is 
    \begin{align*}
      Q(f \bK_{k-1}, f \bK_k) \ &= \ \set{x_i > c - k, \ x_j < b+1} \\
      &\supseteq \ \set{x_i > a - k, \ x_j < b+1}, 
    \end{align*}
    proving \ref{leftquad1}. Finally, note that $K_0 = H^i_a$ and
    $f\bK_0 = H^i_{c-1}$, and \ref{leftquad2} is clear since $c -1 <
    a$. 
    \qedhere

  \end{proof}

  \begin{corollary} \label{Cor:LeftQuad}
    
    Let $\sigma=\set{K_0,\ldots,K_m} \subset \half^i$ be tightly nested
    in $\ess_g$ and suppose there is an $f \in G$ such that $f \bsigma
    \subset \axis_g^+$ and $f\bsigma \not\subset \half^i$. Let $x_i$ be
    horizontal and let $k$ be the smallest index such that $f\bK_k \notin
    \half^i$. Suppose there is a vertex $v$ in $\ess_g$ such that $v \in
    K_0$ and $v \notin f\bK_k$. Then the quadrant $Q(K_k, f\bK_k)$ faces
    northwest. 

  \end{corollary}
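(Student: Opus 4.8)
The plan is to argue by contradiction, using the southeast-quadrant-shifting lemma. First I would record that $Q(K_k,f\bK_k)$ is defined — it exists precisely because $f\bK_k\notin\half^i$ (by \remref{ExtendedQuad}, which is where property \ref{Def:raaglike}\ref{raag1} enters) — and that, by the Quadrant Lemma, it faces either northwest or southeast. So it suffices to derive a contradiction from the assumption that $Q(K_k,f\bK_k)$ faces southeast.

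Assume it does. Then all the hypotheses of \lemref{LeftQuad} are in force verbatim, so I would invoke \lemref{LeftQuad}\ref{leftquad1}. Writing $K_0=H^i_a$, $K_k=H^i_{a+k}$, and $f\bK_k=H^j_b$ with $j\neq i$ — so that the southeast quadrant generated by $K_k$ and $f\bK_k$ is $Q(K_k,f\bK_k)=\set{x_i>a+k,\ x_j<b+1}$ — that lemma says $\ess_g$ is also disjoint from the enlarged quadrant $Q=\set{x_i>a-k,\ x_j<b+1}$. (When $k=0$ this $Q$ is just $Q(K_0,f\bK_0)$ itself, which is already disjoint from $\ess_g$ by definition of the quadrant generated by a nested pair.)

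It remains to locate the given vertex $v$ inside $Q$. Since $v\in K_0=H^i_a$, its $i$-th coordinate satisfies $v_i\geq a+1$, hence $v_i>a-k$ because $k\geq 0$. Since $v\notin f\bK_k=H^j_b$, its $j$-th coordinate satisfies $v_j\leq b$, hence $v_j<b+1$. Therefore $v\in Q$, contradicting the fact that $v$ is a vertex of $\ess_g$ and $\ess_g\cap Q=\emptyset$. This forces $Q(K_k,f\bK_k)$ to face northwest.

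The whole argument is bookkeeping once \lemref{LeftQuad} is available; the only point that needs care is keeping the index conventions of \lemref{LeftQuad} straight, so that the quadrant $Q$ it produces is the enlargement straddling $K_0$ rather than $K_k$ — which is exactly the one that the hypotheses $v\in K_0$ and $v\notin f\bK_k$ are tailored to hit.
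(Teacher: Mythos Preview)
Your proof is correct and follows essentially the same approach as the paper: assume the quadrant faces southeast, invoke \lemref{LeftQuad}\ref{leftquad1} to obtain the enlarged quadrant $Q=\set{x_i>a-k,\ x_j<b+1}$, and then check that the hypotheses $v\in K_0$ and $v\notin f\bK_k$ place $v$ inside $Q$, contradicting $\ess_g\cap Q=\emptyset$. Your write-up is slightly more explicit (noting why the quadrant exists and handling $k=0$), but the argument is the same.
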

  
  \begin{proof}
    
    Set $K_0=H^i_a$ for some $a \in \Z$, so $K_k = H^i_{a+k}$. We assume
    $f\bK_k \notin \half^i$, so there exists $j \ne i$ and $b \in \Z$ such
    that $f\bK_k = H^j_b$. Suppose $Q(K_k,f\bK_k)$ faces southeast; that
    is: 
    \[ 
      Q(K_k,f\bK_k)= \set{x_i > a+k, \ x_j < b+1}.
    \] 
    By \lemref{LeftQuad}\ref{leftquad1}, $\ess_g$ also avoids the larger quadrant
    \[ 
      Q= \set{x_i > a-k, \ x_j < b+1}.
    \]
    Since $v \notin f\bK_k$, $v_j \le b$. But $v \in K_0$, so $v_i \ge a+1$.
    So $v \in Q$, which is a contradiction. Therefore, $Q(K_k,f\bK_k)$
    faces northwest. \qedhere
     
  \end{proof}
  
  The next two results are completely analogous to the previous two, with
  the same proofs: 

  \begin{lemma}[Northwest quadrant shifting] \label{Lem:RightQuad}

    Let $\sigma=\set{K_0, \dotsc, K_m} \subset \half^i$ be
    tightly nested in $\ess_g$ and suppose there is an $f\in G$ such that $f
    \bsigma \subset \axis_g^+$ and $f\bsigma \not\subset \half^i$. Let $x_i$
    be horizontal and let $k$ be the largest index such that $f \bK_k
    \not\in \half^i$. Suppose the quadrant $Q(K_k, f\bK_k)$ faces
    northwest, so that 
    \[ 
      Q(K_k, f\bK_k) \ = \ \set{x_i < a - (m-k), \ x_j > b}
    \]
    for some $j \not= i$, $a, b\in \Z$. Then
    \begin{enumerate}[label=\textup{(\arabic*)}]
      \item $\ess_g$ also avoids the larger quadrant 
      $Q \ = \ \set{ x_i < a+(m-k), \ x_j > b}$. \label{rightquad1}
      \item If\/ $k < m$ then $f\bK_m \in \half^i$ and
        $K_m \supset f \bK_m$. \label{rightquad2} \qed
    \end{enumerate}

  \end{lemma}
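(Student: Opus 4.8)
The plan is to reproduce the argument of \lemref{LeftQuad} and \corref{LeftQuad} with every orientation reversed: ``smallest index'' becomes ``largest index'', ``the initial subsegment of $\sigma$ before $K_k$'' becomes ``the terminal subsegment of $\sigma$ after $K_k$'', ``southeast'' becomes ``northwest'', and ``entirely to the left'' becomes ``entirely to the right''. If $k=m$ then $Q=Q(K_k,f\bK_k)$ and there is nothing to prove, so assume $k<m$ and let $\beta=\set{K_{k+1},\dotsc,K_m}$ be the terminal subsegment of $\sigma$ following $K_k$; by the maximality of $k$ we have $f\bbeta\subset\half^i$. Since $\sigma$ is tightly nested its members occupy consecutive positions in $\half^i$, and reading off the description of $Q(K_k,f\bK_k)$ gives $K_k=H^i_{a-(m-k)-1}$, hence $K_{k+1}=H^i_{a-(m-k)}$ and $K_m=H^i_{a-1}$, while $f\bK_k=H^j_b$ with $K_k\supset f\bK_k$ in $\ess_g$. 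Likewise $f\bbeta$ occupies consecutive positions in $\half^i$, so we may write $f\bK_{k+1}=H^i_d$ and $f\bK_m=H^i_{d-(m-k-1)}$.

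Next I would apply the Elbow Lemma (\lemref{Elbow}) to the tightly nested pair $\set{f\bK_{k+1},f\bK_k}=\set{H^i_d,H^j_b}$ (here $H^i_d\supset H^j_b$, the northwest configuration), which puts the edge $\set{d+1}\times[b,b+1]$ in $p_{ij}(\ess_g)$. By \lemref{SameWay} together with \remref{ExtendedQuad}, the extended quadrant $Q(K_{k+1},f\bK_k)=\set{x_i<a-(m-k)+1,\ x_j>b}$ also faces northwest and is disjoint from $\ess_g$; since the located edge must avoid it, we get $d+1\geq a-(m-k)+1$, that is, $f\bbeta$ is not entirely to the left of $\beta$ in $\half^i$. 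Then \lemref{NoTouching}, applied to the tightly nested segment $\beta$ and its image $f\bbeta\subset\half^i$, forbids $\beta$ and $f\bbeta$ from overlapping; hence $f\bbeta$ lies entirely to the right of $\beta$, and comparing the consecutive positions of the two segments yields $d\geq a+(m-k)-1$.

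It then remains to read off the two conclusions. Statement \ref{rightquad1} holds because the quadrant $Q(f\bK_{k+1},f\bK_k)=\set{x_i<d+1,\ x_j>b}$ is disjoint from $\ess_g$ by definition, and $d+1\geq a+(m-k)$ shows that it contains $Q=\set{x_i<a+(m-k),\ x_j>b}$. For \ref{rightquad2}, the inequality $d-(m-k-1)\geq a>a-1$ gives $K_m=H^i_{a-1}\supset H^i_{d-(m-k-1)}=f\bK_m$, and $f\bK_m\in\half^i$ because $k<m$.

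I do not expect a genuine obstacle: the entire content is that the configuration is the exact mirror image of \lemref{LeftQuad}. The only care needed is bookkeeping --- tracking which way each quadrant opens, which coordinate of the located elbow edge is pinned down, and invoking the Elbow Lemma in the correctly relabelled form for a pair in northwest configuration (its statement is asymmetric in $i$ and $j$). The index convention for $H^i_n$, in which a larger subscript gives the smaller half-space, is the one place a sign slip could creep in, so I would check each inclusion against it.
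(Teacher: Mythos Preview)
Your proof is correct and is precisely the mirror-image argument the paper intends: the authors state that \lemref{RightQuad} is ``completely analogous'' to \lemref{LeftQuad} ``with the same proof,'' and your reversal of orientations (terminal subsegment $\beta$ in place of the initial subsegment $\alpha$, northwest in place of southeast, the Elbow Lemma applied with the roles of $i$ and $j$ swapped) is exactly that analogy carried out. The index bookkeeping---$K_k=H^i_{a-(m-k)-1}$, the location of the elbow edge at $x_i=d+1$, and the final inequality $d\geq a+(m-k)-1$---all check out.
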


  \begin{corollary} \label{Cor:RightQuad}
    
    Let $\sigma=\set{K_0,\ldots,K_m} \subset \half^i$ be tightly nested
    in $\ess_g$ and suppose there is an $f \in G$ such that $f\bsigma
    \subset \axis_g^+$ and $f \bsigma \not\subset \half^i$. Let $x_i$ be
    horizontal and let $k$ be the largest index such that $f\bK_k \notin
    \half^i$. Suppose there is a vertex $v$ in $\ess_g$ such that $v
    \notin K_m$ and $v \in f\bK_k$. Then the quadrant $Q(K_k, f\bK_k)$
    faces southeast. \qed 

  \end{corollary}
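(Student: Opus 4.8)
\corref{RightQuad} is the exact mirror of \corref{LeftQuad} under reversal of the orientation of the horizontal coordinate $x_i$: this reversal swaps the labels \emph{northwest} and \emph{southeast}, interchanges $K_0$ with $K_m$ (so that ``smallest index'' becomes ``largest index''), and replaces \lemref{LeftQuad} and \corref{LeftQuad} by \lemref{RightQuad} and the present statement. So the plan is to transport the proof of \corref{LeftQuad} across this symmetry, substituting \lemref{RightQuad}\ref{rightquad1} for \lemref{LeftQuad}\ref{leftquad1}.

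In detail, I would argue by contradiction, assuming that $Q(K_k, f\bK_k)$ faces northwest. (This quadrant is well defined: both $K_k$ and $f\bK_k$ lie in $\axis_g^+$ — the latter by the hypothesis $f\bsigma \subset \axis_g^+$ — so if they were transverse in $\ess_g$ they would be transverse in $X$ by \lemref{Crossing}, forcing $\partial K_k$ and $f\partial K_k$ to cross and contradicting property \ref{Def:raaglike}\ref{raag1}; cf. \remref{ExtendedQuad}.) Write $f\bK_k = H^j_b$ with $j \neq i$, and normalize the coordinates so that $K_m = H^i_{a-1}$; since $\sigma$ is tightly nested in $\half^i$, the northwest quadrant then takes exactly the form $Q(K_k, f\bK_k) = \set{x_i < a-(m-k),\ x_j > b}$ appearing in \lemref{RightQuad}. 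Applying \lemref{RightQuad}\ref{rightquad1}, $\ess_g$ also avoids the enlarged northwest quadrant $Q = \set{x_i < a+(m-k),\ x_j > b}$.

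Finally, I would feed in the vertex $v$: the hypothesis $v \notin K_m$ forces $v_i \leq a-1 < a+(m-k)$ (here $m-k \geq 0$ is precisely what makes the enlargement harmless), while $v \in f\bK_k$ forces $v_j > b$. Hence $v \in Q \cap \ess_g$, contradicting the fact that $\ess_g$ avoids $Q$. Therefore $Q(K_k, f\bK_k)$ does not face northwest, and since the transverse case was excluded above, the Quadrant Lemma leaves only the southeast alternative.

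I do not anticipate a genuine obstacle here: all of the geometry is already packaged inside \lemref{RightQuad} (which is itself the mirror of \lemref{LeftQuad}, assembled from the Elbow Lemma and \lemref{NoTouching}). The only point that requires attention is index bookkeeping — choosing the normalization of the integer $a$ so that the quadrant supplied by \lemref{RightQuad} has exactly the stated coordinate description, and checking that the two hypotheses $v \notin K_m$ and $v \in f\bK_k$ genuinely place $v$ inside the \emph{enlarged} quadrant $Q$ and not merely inside the original $Q(K_k, f\bK_k)$.
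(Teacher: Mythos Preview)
Your proposal is correct and matches the paper's approach exactly: the paper itself gives no proof beyond the \qed, declaring \corref{RightQuad} ``completely analogous'' to \corref{LeftQuad}, and your argument is precisely the mirrored version of that proof with \lemref{RightQuad}\ref{rightquad1} in place of \lemref{LeftQuad}\ref{leftquad1}. Your normalization $K_m = H^i_{a-1}$ is the right one to make the coordinates line up with the statement of \lemref{RightQuad}, and the verification that $v$ lands in the enlarged quadrant $Q$ is correct.
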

   
\section{Proof of the main theorem} 

  \label{Sec:Last}

  Our goal in this section is to prove \thmref{Main} from the Introduction,
  which we restate here: 
  
  \begin{theorem} \label{Thm:Gap}

    Let $X$ be a CAT(0) cube complex with a RAAG-like action by $G$. Then
    $\scl(g) \geq 1/24$ for every hyperbolic element $g\in G$. 

  \end{theorem}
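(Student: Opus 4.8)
The plan is to produce, for each hyperbolic $g\in G$, a segment $\g$ with $\widehat{\varphi}_\g(g)\ge 1$. Granting this, \lemref{Defect} (the homogeneous quasimorphism $\widehat{\varphi}_\g$ has defect at most $12$) and \lemref{SCL} (the easy direction of Bavard Duality) give $\scl(g)\ge \widehat{\varphi}_\g(g)/24\ge 1/24$ for $g\in[G,G]$; the general case follows since $\scl(g)=\scl(g^k)/k$ while $\widehat{\varphi}_\g(g^k)=k\,\widehat{\varphi}_\g(g)\ge k$. To build $\g$: since a RAAG-like action is non-transverse (\remref{NonTransverse}), $\gen{g}$ acts non-transversely on $\ess_g$, so $\ess_g=\mess_g$ is finite-dimensional by \propref{Equality} and \lemref{MFiniteDim}. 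Apply \propref{TautEmbedding} to obtain a taut $\gen{g}$--equivariant embedding $\ess_g\into\R^d$: a maximal cube $C$ is sent to $[0,1]^d$ with minimal vertex $o$ the origin, $\axis_g^+$ is identified with $\half^1\sqcup\cdots\sqcup\half^d$, and $[A,gA)=[o,go]$. Let $\g=[A,gA)\cap\half^1$ be the taut segment and take $o$ as base vertex.

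Since $\widehat{\varphi}_\g=\widehat{\psi}_\g$ (\lemref{Defect}), and changing the base vertex alters $\psi_\g$ by a bounded amount (so leaves the homogenization unchanged, by \lemref{Bounded}), we may compute $\widehat{\varphi}_\g(g)=\lim_{n\to\infty}\omega_\g(o,g^n o)/n=\lim_{n\to\infty}\bigl(c_\g(o,g^n o)-c_{\bg}(o,g^n o)\bigr)/n$. The vertex $o$ lies on a combinatorial axis $L$ of $g$, being a vertex of $\mess_g\subseteq M_g$; hence $o,go,\dots,g^n o$ occur in this order along the geodesic $L$, so $[o,g^n o]=\bigsqcup_{k=0}^{n-1}g^k[o,g o]=\bigsqcup_{k=0}^{n-1}g^k[A,gA)$, a disjoint union of intervals.

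For the lower bound I would prove $c_\g(o,g^n o)\ge n$ using the $n$ copies $\g,\,g\g,\dots,g^{n-1}\g$. Each satisfies $g^k\g\subseteq g^k[A,gA)\subseteq[o,g^n o]$; two of them cannot share a half-space because the intervals $g^k[A,gA)$ are pairwise disjoint; and two cannot have transverse half-spaces, since $g^kH\trans g^{k'}H'$ with $k<k'$ and $H,H'\in\g\subseteq\half^1$ would force $H\trans g^{k'-k}H'$, which is forbidden by \ref{Def:raaglike}\ref{raag1} when $H=H'$ and otherwise is excluded by comparing with the tightly nested consecutive pairs inside $\g$ and invoking \ref{Def:raaglike}\ref{raag2} (with \lemref{NoTouching} and the Quadrant Lemma \lemref{Quadrant} used to constrain the configuration). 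Thus the $n$ copies are pairwise non-overlapping and $c_\g(o,g^n o)\ge n$.

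The main obstacle is the upper bound, which I would get by showing $c_{\bg}(o,g^n o)=0$ for all $n$; equivalently, $\ess_g$ contains no $G$--translate of $\bg$, i.e.\ there is no $f\in G$ with $f\bg\subseteq\axis_g^+$. This is precisely the assertion flagged in the outline as absorbing essentially all of the effort. The argument is by contradiction: given such an $f$, apply the quadrant-shifting results (\lemref{LeftQuad}, \lemref{RightQuad}, \corref{LeftQuad}, \corref{RightQuad}) to the tightly nested segment $\sigma=\g\subseteq\half^1$ together with $f$. If $f\bsigma\subseteq\half^1$, one reduces to \lemref{NoTouching}, which contradicts the no-inversion hypothesis and property \ref{Def:raaglike}\ref{raag3}. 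If instead $f\bsigma$ bends into other coordinate directions, the shifting corollaries force the quadrants at the two ends of $\sigma$ to face in incompatible directions, so \lemref{ForcingOverlap} produces an overlap of a non-trivial subsegment of $\sigma$ with its $f$--image, again contradicting \lemref{NoTouching}. With both bounds established, $\omega_\g(o,g^n o)=c_\g(o,g^n o)\ge n$, hence $\widehat{\varphi}_\g(g)\ge 1$ and $\scl(g)\ge 1/24$.
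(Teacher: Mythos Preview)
Your approach has the right architecture, but there is a genuine gap in the choice of segment. You take $\g$ to be the \emph{full} taut segment $[A,gA)\cap\half^1$ and then assert that $\g,\,g\g,\ldots,g^{n-1}\g$ are pairwise non-overlapping. This is false in general: the paper's pentagon example (\figref{Pure}, and the discussion at the start of \secref{Last}) exhibits a RAAG-like action in which the full taut segment is \emph{not} $g$--nested, i.e.\ $\g$ and $g\g$ overlap. Your justification appeals to property~\ref{Def:raaglike}\ref{raag2}, but that property only forbids $H\trans fH'$ when $H,H'$ are \emph{tightly} nested; for non-adjacent $H,H'\in\g$ (say $H^1_0$ and $H^1_n$) there is no such restriction, and indeed $H^1_0\trans gH^1_n$ can occur. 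So the inequality $c_\g(o,g^n o)\ge n$ is not established, and in fact can fail.

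The paper's fix is to replace the full taut segment by a \emph{maximal $g$--nested} subsegment $\g=\{H^1_\ell,\ldots,H^1_r\}$, for which $\g>g\g$ holds by definition and hence $c_\g(o,g^n o)=n$ is immediate. The price is that $\g$ may be strictly shorter, so excluding a reverse copy $h\bg\subset\axis_g^+$ becomes genuinely harder. Your sketch of that step (``the shifting corollaries force the quadrants at the two ends to face in incompatible directions'') is too coarse: the paper's argument (\propref{Horizontal}, \propref{Northwest}, \propref{Southeast}) repeatedly uses the \emph{maximality} of $\g$ via \lemref{Nested}, which produces the crucial transverse squares $H^1_{\ell-1}\trans g^{-1}H^1_r$ and $gH^1_\ell\trans H^1_{r+1}$ at the boundary of $\g$. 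These squares supply the vertices needed to orient the quadrants correctly before one can invoke \lemref{ForcingOverlap} and \lemref{NoTouching}. Without identifying and exploiting this maximality, the ``no reverse copy'' step does not go through.
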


  We continue with the same notation as in the previous section. Fix a taut
  equivariant embedding $\ess_g \into \R^d$. Let $C$ be the cube in
  $\ess_g$ mapped to $[0,1]^d$ under the equivariant embedding. Let $A$ be
  the set of half-spaces in $\axis_g^+$ dual to $C$, so that $[A,gA)$ is a
  fundamental domain for the action of $\gen{g}$ on $\axis_g^+$. We have
  $[A,gA) = [o,go]$, where $o \in \R^d$ is the origin. Identify
  $\half(\R^d)$ with $\axis_g$. Recall that by property (3) of
  \propref{TautEmbedding}, the extended taut segment $[A,gA] \cap \half^1$
  is tightly nested in $\ess_g$. Write $[A,gA) \cap \half^1 =
  \set{H^1_0,\ldots,H^1_n}$. 
  
  Most of this section is devoted to finding a tightly nested subsegment
  $\gamma \subseteq [A,gA) \cap \half^1$ such that $\gamma > g\gamma$ and
  no copy of $\bg$ appears in $\axis_g^+$. Once we find such a $\gamma$,
  then \thmref{Gap} follows immediately; details are laid out in the proof
  at the end of this section. To arrange that $\gamma > g\gamma$, we may
  have to pass to a proper subsegment of $[A,gA) \cap \half^1$. On the
  other hand, if $\g$ is short, it is more likely for $\axis_g^+$ to
  contain a copy of $\bg$. Our approach, therefore, is to use a
  \emph{maximal $g$--nested segment}, defined below. 

  After discussing maximal $g$--nested segments, we proceed to show that
  for such a segment $\gamma$, no copy of $\bgamma$ can lie in $\axis_g^+$.
  First we prove Lemmas \ref{Lem:ForcingNorthwest} and
  \ref{Lem:ForcingSoutheast}, which are technical statements used to show
  that quadrants face a particular way, under appropriate conditions. Next
  comes \propref{Horizontal}, which states that if $h\bgamma \subset
  \axis_g^+$, then $h\bgamma$ cannot lie entirely in $\half^1$. Hence
  $h\bgamma$ contains half-spaces in $\axis_g^+$ outside of $\half^1$. Such
  a half-space generates a quadrant, by property
  \ref{Def:raaglike}\ref{raag1}. In Propositions \ref{Prop:Northwest} and
  \ref{Prop:Southeast} we prove that the first such quadrant faces
  northwest, and the last such quadrant faces southeast. In this way, the
  forbidden configuration of \lemref{ForcingOverlap} is created, resulting
  in a contradiction. 

  \subsection*{Maximal $g$--nested segments}

  \begin{definition}

    A subsegment $\gamma=\set{H_\ell^1,\ldots,H_r^1}$ of $[A,gA) \cap
    \half^1$ is said to be \emph{$g$--nested} if $\gamma > g \gamma$ in
    $\ess_g$. It is a \emph{maximal $g$--nested} segment if it is
    $g$--nested and is not properly contained in another $g$--nested
    subsegment of $[A,gA) \cap \half^1$. 
 
    \figref{Pure} shows an example where the full segment $\gamma = [A, gA)
    \cap \half^1$ is not $g$--nested; this is the segment of blue
    half-spaces labeled $a$, $c$, $e$. In this example, the subsegment
    consisting of the pair labeled $a$, $c$ is a maximal $g$--nested
    segment, as is the pair labeled $c$, $e$. 

    Note that for every $H \in \axis_g^+$ we have $H \supset gH$ in
    $\ess_g$ by Remark \ref{Rem:TransNest2} and Property
    \ref{Def:raaglike}\ref{raag1}. Thus every subsegment of $[A, gA) \cap
    \half^1$ of length $1$ is $g$--nested. It follows that every $H \in
    [A, gA) \cap \half^1$ is contained in a maximal $g$--nested segment. 

  \end{definition}
    
  \begin{lemma} \label{Lem:Nested}
      
      Let $\gamma=\set{H^1_\ell,\ldots,H^1_r}$ be a maximal $g$--nested
      subsegment of $[A, gA) \cap \half^1$. Then the following two
      statements hold: 
      \begin{enumerate}[label=\textup{(\arabic*)}]
        \item Either $\ell=0$ or $H^1_{\ell-1} \trans g^{-1}H^1_r$ in
        $\ess_g$.  \label{nested1}
        \item Either $r=n$ or $gH^1_\ell \trans H^1_{r+1}$ in
          $\ess_g$.  \label{nested2} 
      \end{enumerate}

  \end{lemma}
  
  \begin{proof}
    
    Suppose $\ell > 0$. If $H^1_{\ell-1}$ and $g^{-1}H^1_r$ are not
    transverse in $\ess_g$, then they are nested in $\ess_g$  by
    Remark \ref{Rem:TransNest}. Since $o \in g^{-1} H_r - H_{\ell
      -1}$, this means that $g^{-1}H^1_r \supset H^1_{\ell-1}$ in
    $\ess_g$, which is equivalent to $H^1_r \supset
    gH^1_{\ell-1}$. Let $\gamma' =
    \set{H^1_{\ell-1},\ldots,H^1_r}$. We have: 
    \[
      H^1_{\ell-1} \supset \cdots \supset H^1_r \supset gH^1_{\ell-1}
      \supset \cdots \supset gH^1_r.
    \] 
    So $\gamma'$ is $g$--nested and $\gamma'$ properly contains $\gamma$,
    violating maximality of $\gamma$. Similarly, if $r < n$ and
    $gH^1_\ell$ and $H^1_{r+1}$ are not transverse, then the segment
    $\set{H^1_\ell,\ldots,H^1_{r+1}}$ is $g$--nested and contains
    $\gamma$. \qedhere 
    
  \end{proof}
  
  We now proceed with the main steps of the proof of
  \thmref{Gap}. The primary goal is to show that a maximal $g$--nested
  segment in $[A,gA) \cap \half^1$ never appears in $\axis_g^+$ in
  the reverse direction. The next two lemmas are technical steps that
  are needed along the way.
    
  \begin{lemma} \label{Lem:ForcingNorthwest}
    
    Let $\gamma = \set{H^1_\ell,\ldots,H^1_r}$ be a maximal $g$--nested
    subsegment of $[A,gA) \cap \half^1$. Suppose $h \bgamma \subset
    \axis_g^+$ and $h\bgamma \not\subset \half^1$, for some $h \in G$. 
    Suppose $\ell > 0$. Let $x_1$ be horizontal and let $j$ be the
    smallest integer between $\ell$ and $r$ such that $h\bH^1_j \notin
    \half^1$. Then either the quadrant $Q(H^1_j,h\bH^1_j)$ faces
    northwest, or there is a vertex $v$ in $\ess_g$ such that $v \notin
    g^{-1}H^1_r$ and $v \in h\bH^1_\ell$. 

  \end{lemma}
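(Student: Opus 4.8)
The plan is to establish the dichotomy by assuming that $Q(H^1_j,h\bH^1_j)$ faces southeast and producing the vertex $v$. All the work is carried out inside $\R^d$ via the taut embedding, with $\axis_g^+$ identified with $\half^+(\R^d)=\half^1\sqcup\dots\sqcup\half^d$. Three facts will be used repeatedly: since $\ell>0$, the half-spaces $H^1_{\ell-1}$ and $H^1_\ell$ are consecutive in the extended taut segment $[A,gA]\cap\half^1$, hence tightly nested in $\ess_g$ by property~\ref{taut3} of \propref{TautEmbedding}; by \lemref{Nested}(1) together with $\ell>0$ we have $H^1_{\ell-1}\trans g^{-1}H^1_r$ in $\ess_g$; and since $\gamma>g\gamma$ we have $g^{-1}H^1_r\supset H^1_\ell$ in $\ess_g$.

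With the southeast assumption in force I would apply \lemref{LeftQuad} to $\sigma=\gamma$ and $f=h$ (whose hypotheses all hold, with the index $k$ of that lemma corresponding to $j$). If $j>\ell$, then \lemref{LeftQuad}(2) gives $h\bH^1_\ell\in\half^1$ with $H^1_\ell\subset h\bH^1_\ell$, and reading off the proof of \lemref{LeftQuad} the $\half^1$--index of $h\bH^1_\ell$ is at most $\ell-1$, so $H^1_{\ell-1}\subseteq h\bH^1_\ell$ as subsets of $\R^d$. Choosing a vertex $v\in\ess_g$ witnessing $H^1_{\ell-1}\cap\overline{g^{-1}H^1_r}\neq\emptyset$ (which exists by the transversality noted above), we get $v\in H^1_{\ell-1}\subseteq h\bH^1_\ell$ and $v\notin g^{-1}H^1_r$, as required.

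The remaining case is $j=\ell$. Then $Q(H^1_\ell,h\bH^1_\ell)$ faces southeast, so by \lemref{Quadrant} we have $H^1_\ell\subset h\bH^1_\ell$ in $\ess_g$ and $h\bH^1_\ell\notin\half^1$. I would compare $h\bH^1_\ell$ with $H^1_{\ell-1}$ inside $\ess_g$. The nesting $h\bH^1_\ell\subseteq H^1_{\ell-1}$ is impossible: with $H^1_\ell\subset h\bH^1_\ell$ and the tight nesting of $H^1_{\ell-1}\supset H^1_\ell$ it would force $h\bH^1_\ell$ to equal $H^1_{\ell-1}$ or $H^1_\ell$, contradicting $h\bH^1_\ell\notin\half^1$. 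Hence either $H^1_{\ell-1}\subseteq h\bH^1_\ell$ in $\ess_g$ --- and then the construction of $v$ from the previous paragraph applies verbatim --- or $H^1_{\ell-1}\trans h\bH^1_\ell$ in $\ess_g$. In the latter situation it suffices to rule out $h\bH^1_\ell\subseteq g^{-1}H^1_r$ in $\ess_g$: when that inclusion fails, the set $h\bH^1_\ell\cap\overline{g^{-1}H^1_r}$ is non-empty in $\ess_g$, and any vertex in it is the desired $v$.

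The step I expect to be the genuine obstacle is exactly this last point --- ruling out the configuration ``$H^1_{\ell-1}\trans h\bH^1_\ell$ and $h\bH^1_\ell\subseteq g^{-1}H^1_r$ in $\ess_g$'' (given $j=\ell$, the southeast hypothesis, $\ell>0$, and maximality of $\gamma$). Combining the southeast hypothesis with the putative inclusion produces a chain $H^1_\ell\subset h\bH^1_\ell\subseteq g^{-1}H^1_r$ in $\ess_g$, and here the RAAG-like axioms should be essential: by property~\ref{Def:raaglike}\ref{raag3} (with $H=H^1_\ell$ and $g'=h$) the half-spaces $H^1_\ell$ and $h\bH^1_\ell$ are not tightly nested, so some $H''\in\axis_g^+$ satisfies $H^1_\ell\subset H''\subset h\bH^1_\ell\subseteq g^{-1}H^1_r$ in $\ess_g$. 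One then wants to locate $H''$ relative to $H^1_{\ell-1}$ and to the $\gen{g}$--translates of $A$, and play this against $H^1_{\ell-1}\trans g^{-1}H^1_r$ and against the maximality of $\gamma$ (which by \lemref{Nested}(1) is precisely the statement that $\gamma$ cannot be enlarged to $\{H^1_{\ell-1},\dots,H^1_r\}$), to reach a contradiction. Everything else in the argument is a mechanical consequence of \lemref{LeftQuad}, \lemref{Quadrant}, \lemref{Nested}, and the definition of transversality.
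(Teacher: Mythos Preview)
Your case analysis creates an artificial difficulty. The configuration you flag as the ``genuine obstacle'' --- namely $H^1_{\ell-1}\trans h\bH^1_\ell$ in $\ess_g$ when $j=\ell$ --- is in fact impossible, and for a reason you overlook. Since $H^1_{\ell-1}$ and $H^1_\ell$ are tightly nested, so are $\bH^1_{\ell-1}$ and $\bH^1_\ell$; applying RAAG-like property~\ref{Def:raaglike}\ref{raag2} to this pair (with the group element $h$) gives $\bH^1_{\ell-1}\not\trans h\bH^1_\ell$, hence $H^1_{\ell-1}\not\trans h\bH^1_\ell$. This is exactly the content of \remref{ExtendedQuad}: the quadrant $Q(H^1_{\ell-1},h\bH^1_\ell)$ always exists. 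Your attempted detour through property~\ref{Def:raaglike}\ref{raag3} is both unnecessary and left incomplete.

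The paper's proof avoids your case split altogether. It first fixes the vertex $v$ once and for all --- the vertex of a square witnessing $H^1_{\ell-1}\trans g^{-1}H^1_r$ with $v_1=\ell$ and $v\notin g^{-1}H^1_r$ --- and then shows $v\in h\bH^1_\ell$ in both cases $j=\ell$ and $j>\ell$. For $j=\ell$, the extended quadrant $Q(H^1_{\ell-1},h\bH^1_\ell)=\{x_1>\ell-1,\ x_i<b+1\}$ is disjoint from $\ess_g$; since $v_1=\ell$, this forces $v_i\geq b+1$, i.e.\ $v\in h\bH^1_\ell$. For $j>\ell$, \lemref{LeftQuad}\ref{leftquad2} gives $h\bH^1_\ell=H^1_c$ with $c<\ell=v_1$, so again $v\in h\bH^1_\ell$. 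Your $j>\ell$ argument is essentially the same as the paper's; the difference is entirely in the $j=\ell$ case, where invoking the extended quadrant directly is both shorter and complete.
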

  
  \begin{proof}
    
    By \lemref{Nested}, since $\ell >0$, $H^1_{\ell-1} \trans g^{-1} H^1_r$
    in $\ess_g$. Therefore, there exists a square $S$ in $\ess_g$ in which
    they cross. Let $v$ be the unique vertex of $S$ with $v_1 = \ell$ and
    $v \notin g^{-1}H^1_r$. We now show that $v \in h\bH^1_\ell$ under the
    assumption that $Q(H^1_j,h\bH^1_j)$ faces southeast.
    
    Let $h\bH^1_j = H^i_b$ for some $i \ne 1$ and $b \in \Z$. By
    assumption,  $\ess_g$ avoids the quadrant 
    \[
      Q(H^1_j,h\bH^1_j)  = \set{ x_1 > j, x_i < b+1}.
    \] 
    Since $\ell > 0$ and $j \ge \ell$, the half-spaces $H^1_{j-1}$ and
    $H^1_j$ are tightly nested. Thus, by \remref{ExtendedQuad}, $\ess_g$
    must further avoid the extended quadrant 
    \[
      Q(H^1_{j-1},h\bH^1_j) = \set{ x_1 > j-1, x_i < b+1}.
    \] 
    If $j=\ell$, then for $v$ to lie outside of $Q(H^1_{j-1},h\bH^1_j)$,
    we must have $v_i \ge b+1$, so $v \in H^i_b = h\bH^1_\ell$. 
    
    If $j > \ell$,
    then applying \lemref{LeftQuad}\ref{leftquad2} using 
    \[ 
      \set{K_0,\ldots,K_m} = \set{H^1_\ell,\ldots,H^1_r}, \quad 
      i=1, \quad k=j, \quad f=h, \quad \sigma=\gamma, \quad a=0
    \]
    we obtain that $h\bH^1_\ell \in \half^1$ and $h\bH^1_\ell \supset
    H^1_\ell$. In coordinates, this means that $h\bH^1_\ell = H^1_c$ for
    some $c < \ell=v_1$. Thus, $v \in H^1_c = h\bH^1_\ell$. \qedhere

  \end{proof}
  
  The next lemma is completely analogous to the previous one, with a
  similar proof.

  \begin{lemma} \label{Lem:ForcingSoutheast}
    
    Let $\gamma = \set{H^1_\ell,\ldots,H^1_r}$ be a maximal $g$--nested
    subsegment of $[A,gA) \cap \half^1$. Suppose $h \bgamma \subset
    \axis_g^+$ and $h\bgamma \not\subset \half^1$, for some $h \in
    G$. Suppose $r < n$. let $x_1$ be horizontal and let $j$ be the
    largest integer between $\ell$ and $r$ such that $h\bH^1_j \notin
    \half^1$. Then either the quadrant $Q(H^1_j,h\bH^1_j)$ faces
    southeast, or there is a vertex $v$ in $\ess_g$ such that $v \in
    gH^1_\ell$ and $v \notin h\bH^1_r$. \qed

  \end{lemma}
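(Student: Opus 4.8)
The plan is to mirror the proof of \lemref{ForcingNorthwest} across the segment $\gamma$, reflecting every inequality: I would use the second statement of \lemref{Nested} in place of the first, the right end $H^1_r$ of $\gamma$ in place of the left end $H^1_\ell$, northwest quadrants in place of southeast quadrants, and \lemref{RightQuad} (northwest quadrant shifting) in place of \lemref{LeftQuad} (southeast quadrant shifting). As in the hypothesis, $x_1$ is horizontal throughout, and the index $j$ exists precisely because $h\bgamma \not\subset \half^1$.

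First I would invoke the second statement of \lemref{Nested}: since $r < n$, it gives $gH^1_\ell \trans H^1_{r+1}$ in $\ess_g$, so there is a square $S \subset \ess_g$ in which these two half-spaces cross. Let $v$ be the vertex of $S$ with $v_1 = r+1$ (the first coordinate lying ``just outside'' $H^1_{r+1}$) and $v \in gH^1_\ell$; this $v$ is unique, since $\partial(gH^1_\ell)$ separates the two vertices of $S$ whose first coordinate is $r+1$. Now $v \in gH^1_\ell$ holds by construction, so the whole task reduces to showing $v \notin h\bH^1_r$ under the assumption that $Q(H^1_j, h\bH^1_j)$ does \emph{not} face southeast. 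Just as in \lemref{ForcingNorthwest}, one first checks that this quadrant is well-defined: if $H^1_j$ and $h\bH^1_j$ were transverse then $h^{-1}H^1_j \trans H^1_j$, which property \ref{Def:raaglike}\ref{raag1} forbids; hence $H^1_j$ and $h\bH^1_j$ are nested in $\ess_g$, $Q(H^1_j, h\bH^1_j)$ exists, and by assumption faces northwest. Writing $h\bH^1_j = H^i_b$ with $i \neq 1$, this says $\ess_g$ avoids $\set{x_1 < j+1, \ x_i > b}$.

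Next I would split into the cases $j = r$ and $j < r$. If $j = r$: since $r < n$ the pair $H^1_r, H^1_{r+1}$ is tightly nested, so property \ref{Def:raaglike}\ref{raag2}, via \remref{ExtendedQuad} and \lemref{SameWay}, shows that $\ess_g$ also avoids the extended northwest quadrant $Q(H^1_{r+1}, h\bH^1_r) = \set{x_1 < r+2, \ x_i > b}$; as $v_1 = r+1 < r+2$, the only way $v$ can lie outside this quadrant is $v_i \leq b$, i.e.\ $v \notin H^i_b = h\bH^1_r$. If $j < r$: apply \lemref{RightQuad}\ref{rightquad2} with $\set{K_0, \dotsc, K_m} = \set{H^1_\ell, \dotsc, H^1_r}$, $i = 1$, $f = h$, $\sigma = \gamma$, and $k = j - \ell$ (the largest index with $h\bK_k \notin \half^1$). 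Since $j < r$ we have $k < m$, so the lemma yields $h\bH^1_r \in \half^1$ and $H^1_r \supset h\bH^1_r$; in coordinates $h\bH^1_r = H^1_c$ with $c \geq r+1$, and then $v_1 = r+1 \leq c$ forces $v \notin H^1_c = h\bH^1_r$. In both cases $v \notin h\bH^1_r$, so together with $v \in gH^1_\ell$ this is the second alternative; and if $Q(H^1_j, h\bH^1_j)$ does face southeast, the first alternative holds, which finishes the argument.

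I do not anticipate a genuine obstacle, since this is a point-for-point reflection of \lemref{ForcingNorthwest}; the only thing needing care is keeping the orientations consistent under the reflection --- in particular, confirming that the correct choice here is $v_1 = r+1$ (the mirror of $v_1 = \ell$ there), that \lemref{RightQuad} rather than \lemref{LeftQuad} is the tool, and that the resulting inequalities come out as $c \geq r+1$ and $r+1 < r+2$, which is exactly what places the locus $\set{x_1 = r+1}$ occupied by $v$ inside the region that $\ess_g$ must avoid.
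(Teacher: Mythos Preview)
Your proposal is correct and is exactly the point-for-point reflection of \lemref{ForcingNorthwest} that the paper intends; the paper itself gives no proof and simply declares the lemma ``completely analogous to the previous one, with a similar proof.'' Your bookkeeping---the choice $v_1 = r+1$, the extended northwest quadrant $Q(H^1_{r+1}, h\bH^1_r)$ in the case $j=r$, and the invocation of \lemref{RightQuad}\ref{rightquad2} yielding $c \geq r+1$ in the case $j<r$---is all correct.
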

    
  The next three propositions will form the main body of the argument. The
  first one shows that if a reverse copy of a maximal $g$--nested segment
  appears in $\axis_g^+$, then it cannot lie entirely within
  $\half^1$.   

  \begin{proposition} \label{Prop:Horizontal}

    Let $\gamma = \set{H^1_\ell,\ldots,H^1_r}$ be a maximal $g$--nested
    subsegment of $[A,gA) \cap \half^1$. Suppose $h \bgamma \subset
    \axis_g^+$ for some $h \in G$, and that $h\bH^1_r \in [A,gA)$. Then
    $h\bgamma \not\subset \half^1$. 

  \end{proposition}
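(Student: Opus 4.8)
The plan is to argue by contradiction: suppose $h\bgamma \subset \half^1$ and derive a contradiction from the RAAG-like hypotheses together with the maximality of $\gamma$.

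The first move is \lemref{NoTouching}, applied with $\alpha = \gamma$ and $i = 1$: here $\gamma$ is tightly nested in $\ess_g$, being a subsegment of the extended taut segment $[A,gA]\cap\half^1$, and $h\bgamma\subset\half^1$ by assumption, so the lemma gives that $\gamma$ and $h\bgamma$ do not overlap. Since no two half-spaces of $\half^1$ are transverse, this means $\gamma\cap h\bgamma=\emptyset$; and since $\half^1$ is linearly ordered by inclusion in $\ess_g$, compatibly with the integer levels (by \remref{Square}), the two segments are themselves nested: either $\gamma>h\bgamma$ or $h\bgamma>\gamma$. It is also worth recording that a tightly nested chain of half-spaces contained in $\half^1$ must occupy consecutive integer levels — an intermediate $H^1_m$ would otherwise lie strictly between two of them in $\ess_g$, as $\ess_g$ meets every strip $\set{m<x_1<m+1}$ while containing an axis — so $h\bgamma$ is a block whose outermost member is $h\bH^1_r$, and by hypothesis $h\bH^1_r\in[A,gA)\cap\half^1=\set{H^1_0,\dotsc,H^1_n}$.

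Now split into the two cases. If $\gamma>h\bgamma$, then the innermost half-space $H^1_r$ of $\gamma$ strictly contains $h\bH^1_r=H^1_q$, so $q>r$ and hence $r<n$; \lemref{Nested}\textup{(2)} then supplies $gH^1_\ell\trans H^1_{r+1}$ in $\ess_g$, while $\gamma>g\gamma$ gives $H^1_r\supset gH^1_\ell$, pinning the block $h\bgamma$ (which starts at level $q\ge r+1$) into the part of $\half^1$ lying between $H^1_r$ and $gH^1_\ell$. If instead $h\bgamma>\gamma$, then $h\bH^1_r=H^1_q$ strictly contains the outermost half-space $H^1_\ell$ of $\gamma$, so $q<\ell$ and $\ell>0$; \lemref{Nested}\textup{(1)} gives $H^1_{\ell-1}\trans g^{-1}H^1_r$, i.e.\ $gH^1_{\ell-1}\trans H^1_r$, in $\ess_g$, with $h\bgamma$ now caught between $g^{-1}\gamma$ and $\gamma$. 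In each case the final step is to feed the transverse pair produced by \lemref{Nested}, the tightly nested pairs available along $\gamma$, $g\gamma$ or $\bgamma$ (and their $h$-translates), and the located position of $h\bgamma$, into the RAAG-like axioms \ref{Def:raaglike}\ref{raag1}--\ref{raag3} and the no-inversion hypothesis, to extract a contradiction.

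I expect this last step to be the main obstacle, and essentially the whole content of the proposition. The subtlety is that the half-space transverse to a $g^{\pm1}$-translate of an endpoint of $\gamma$ handed to us by \lemref{Nested} is not manifestly a $G$-translate of anything tightly nested with the relevant half-space, so property \ref{Def:raaglike}\ref{raag2} cannot be quoted directly; one first has to locate $h\bgamma$ precisely relative to $g\gamma$ (resp.\ $g^{-1}\gamma$) along $\half^1$ — via a Quadrant/Elbow bookkeeping of $[o,go]$ and its $g$-translates, in the spirit of the proofs of \lemref{ForcingNorthwest} and \lemref{ForcingSoutheast} — before the offending tightly-nested-plus-transverse configuration becomes visible.
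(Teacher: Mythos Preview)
Your setup is right, and so is the case division via \lemref{NoTouching} and \lemref{Nested}; but you have correctly diagnosed that you have not actually proved anything yet. The ``final step'' you defer is the entire proposition, and your sketch of how to approach it --- hunting for a tightly-nested-plus-transverse configuration to feed into property~\ref{Def:raaglike}\ref{raag2} --- is not how the paper closes either case. The key move you are missing is to compare $h\bgamma$ not with $\gamma$ but with the appropriate $g$-translate $g^{\pm 1}\gamma$, using property~\ref{Def:raaglike}\ref{raag1} on pairs of half-spaces that are \emph{in the same $G$-orbit}.

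Concretely: in the case $h\bgamma>\gamma$ (your second case), the half-spaces $h\bH^1_r$ and $g^{-1}H^1_r$ are both $G$-translates of $H^1_r$, hence cannot be transverse in $\ess_g$ by property~\ref{Def:raaglike}\ref{raag1}. But $h\bH^1_r\in\half^1$ while $g^{-1}H^1_r\in\half^i$ with $i\neq 1$, so they must be nested. The origin $o$ witnesses $g^{-1}H^1_r\not\subset h\bH^1_r$ (since $o\in g^{-1}H^1_r$ but $o\notin h\bH^1_r\in[o,go]$), and a vertex $v$ of the square realising $H^1_{\ell-1}\trans g^{-1}H^1_r$ witnesses $h\bH^1_r\not\subset g^{-1}H^1_r$. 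That is the contradiction; no bookkeeping of tight nesting is needed. In the case $\gamma>h\bgamma$ the argument is genuinely more involved: one shows, again via property~\ref{Def:raaglike}\ref{raag1} together with the witnesses $v$ (from $gH^1_\ell\trans H^1_{r+1}$) and $go$, that the quadrant $Q(h\bH^1_\ell,gH^1_\ell)$ faces southeast while $Q(h\bH^1_r,gH^1_r)$ faces northwest; then \lemref{ForcingOverlap} applied to $\sigma=h\bgamma$ and $f=gh^{-1}$ yields an overlap forbidden by \lemref{NoTouching}.
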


  \begin{proof}

    If not then $h\bgamma \subset \half^1$. Write $h\bgamma =
    \set{H^1_a, \dotsc, H^1_{a + \abs{\gamma}-1}}$, where $a \geq
    0$ because $h\bH^1_r \in [A, gA)$.  

    Since $\gamma$ and $h\bgamma$ cannot overlap (by Lemma
    \ref{Lem:NoTouching}), there are two possibilities for the location
    of $h\bgamma$ along $\half^1$. 

    The first case is that $a + \abs{\gamma} \leq \ell$ (i.e. $h\bgamma$
    is to the left of $\gamma$). In particular $\ell > 0$ and therefore
    $H^1_{\ell-1} \trans g^{-1}H^1_r$ in $\ess_g$, by
    \lemref{Nested}. Let $g^{-1}H^1_r = H^i_b$ for some $i \not= 1$,
    $b\in \Z$. Note that $b < 0$ because $g^{-1}H^1_r$ contains the
    origin $o$. The projection $p_{1i}(\ess_g)$ contains the square
    $[\ell - 1 , \ell ] \times [b, b+1]$, which is dual to both
    $H^1_{\ell -1}$ and $g^{-1}H^1_r$. Thus there is a vertex $v \in
    \ess_g$ such that $v_1 = \ell$ and $v_i = b$. 

    By property \ref{Def:raaglike}\ref{raag1} the half-spaces $g^{-1}H^1_r$
    and $h\bH^1_r = H^1_a$ are not transverse in $\ess_g$, and hence
    they generate a quadrant $Q$ disjoint from $\ess_g$. However, the
    quadrant $\set{x_1 > a, \ x_i < b+1}$ contains $v$ and $\set{x_1 < a+1,
    \ x_i > b}$ contains $o$. These are the two possibilities for $Q$ and
    thus we have a contradiction (since $v, o \in \ess_g$). 

    The second case is that $r < a$ (i.e. $h\bgamma$ is to the right of
    $\gamma$). Note that $a \leq n$ since $h\bH^1_r \in [A, gA)$. Hence $r
    < n$, and $gH^1_{\ell} \trans H^1_{r+1}$ in $\ess_g$ by
    \lemref{Nested}. Now redefine $i\not= 1$ and $b\in \Z$ such that
    $g\gamma \subset \half^i$ and $gH^1_{\ell} = H^i_b$. Then
    $p_{1i}(\ess_g)$ contains the square $[r+1, r+2] \times [b, b+1]$ dual
    to $H^1_{r+1}$ and $H^i_b$. Let $v\in \ess_g$ be a vertex such that
    $v_1 = r+1$ and $v_i = b+1$. 

    Let $x_1$ be horizontal. By property \ref{Def:raaglike}\ref{raag1} the
    half-spaces $gH^1_{\ell}$ and $h\bH^1_{\ell} = H^1_{a + \abs{\gamma} -
    1}$ are not transverse in $\ess_g$, and hence they generate a
    quadrant $Q$ disjoint from $\ess_g$. The northwest quadrant $\set{x_1 <
    a + \abs{\gamma}, \ x_i > b}$ contains $v$, and therefore cannot be
    disjoint from $\ess_g$. Thus $Q(gH^1_{\ell}, h\bH^1_{\ell}) =
    Q(h\bH^1_{\ell}, g H^1_{\ell})$ faces southeast. 

    Again using property \ref{Def:raaglike}\ref{raag1}, the half-spaces
    $gH^1_r$ and $h \bH^1_r$ are not transverse in $\ess_g$ and generate
    a quadrant $Q(gH^1_r, h \bH^1_r) = Q(h \bH^1_r, g
      H^1_r)$ disjoint from 
    $\ess_g$. If it faces southeast then it is the quadrant $\set{x_1 > a,
    \ x_i < b + \abs{\gamma} }$. We have $g o \not\in g H^1_r$ because $o
    \not\in H^1_r$, and $go \in h\bH^1_r$ because
      $h\bH^1_r \in [A, gA) = [o, go]$.
    Therefore $g o$ is in this southeast quadrant. Since $go \in \ess_g$,
    we conclude that $Q(h \bH^1_r, g H^1_r)$ faces northwest. 

    Now apply \lemref{ForcingOverlap} using 
    \[
      \set{K_0, \dotsc, K_m} = \set{h \bH^1_r, \dotsc, h
          \bH^1_{\ell}}, \quad 
      i = 1, \quad f = gh^{-1}, \quad j = 1, \quad j' = m
    \]
    to obtain a contradiction via \lemref{NoTouching}. \qedhere

  \end{proof}
  
  The next two propositions also deal with a reverse copy of a maximal
  $g$--nested segment in $\axis_g^+$. By the previous proposition,
  there must be a half-space in the segment which lies outside of
  $\half^1$. Such a half-space will generate a quadrant, by property
  \ref{Def:raaglike}\ref{raag1}. The two propositions say that the
  first such quadrant always faces northwest, and the last such
  quadrant always faces southeast. 

  \begin{proposition} \label{Prop:Northwest}

    Let $\gamma = \set{H^1_\ell,\ldots,H^1_r}$ be a maximal $g$--nested
    subsegment of\/ $[A,gA) \cap \half^1$. Suppose $h \bgamma \subset
    \axis_g^+$, $h \bH^1_r \in [A,gA)$, and $h\bgamma \not\subset \half^1$
    for some $h \in G$.  Let $x_1$ be horizontal. Let $j$ be the smallest
    integer between $\ell$ and $r$ such that $h\bH^1_j \notin \half^1$.
    Then the quadrant $Q(H^1_j,h\bH^1_j)$  faces northwest. 

  \end{proposition}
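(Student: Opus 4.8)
The plan is to argue by contradiction. Suppose $Q(H^1_j, h\bH^1_j)$ faces southeast and write $h\bH^1_j = H^p_b$ with $p \neq 1$, so that $\ess_g$ is disjoint from the southeast quadrant $\set{x_1 > j,\ x_p < b+1}$. Since $j \le r$ we have $h\bH^1_j \subseteq h\bH^1_r$, and $h\bH^1_r \in [A,gA) = [o,go]$ gives $o \notin h\bH^1_r$, hence $o \notin h\bH^1_j$, i.e.\ $o_p \le b$.

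First dispose of the case $\ell = 0$. If $j > 0$, apply \lemref{LeftQuad}\ref{leftquad1} with $\sigma = \gamma$, $f = h$ and $k = j$ to see that $\ess_g$ avoids the larger southeast quadrant $\set{x_1 > -j,\ x_p < b+1}$; but this quadrant contains the origin $o$ (as $0 > -j$ and $o_p \le b$), a contradiction. If $j = 0$, then $Q(H^1_0, h\bH^1_0)$ facing southeast means $H^1_0 \subset h\bH^1_0$, and since $H^1_0 \in A \subseteq [o,go]$ this forces $go \in h\bH^1_0$; together with $o \notin h\bH^1_0$ we get $h\bH^1_0 \in [A,gA)$, whence $b \ge 0$. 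Then the vertex of the maximal cube $C$ with first coordinate $1$ and all other coordinates $0$ lies in $\ess_g$ and in $\set{x_1 > 0,\ x_p < b+1}$, again a contradiction. So from now on $\ell > 0$.

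With $\ell > 0$, apply \lemref{ForcingNorthwest} to $\gamma$ and $h$ (all of its hypotheses hold). Its conclusion is that either $Q(H^1_j, h\bH^1_j)$ faces northwest --- contradicting our standing assumption, so we would be done --- or there is a vertex $v \in \ess_g$ with $v \notin g^{-1}H^1_r$ and $v \in h\bH^1_\ell$; from the proof of \lemref{ForcingNorthwest}, $v$ may be taken to be the vertex with $v_1 = \ell$ of a square in which $H^1_{\ell-1}$ and $g^{-1}H^1_r$ cross (such a square exists by \lemref{Nested}, as $\ell>0$). It remains to rule out this second alternative, which is the heart of the argument. The inputs available are: the enlarged southeast quadrant that $\ess_g$ avoids by \lemref{LeftQuad}\ref{leftquad1} (shifting the corner of $Q(H^1_j, h\bH^1_j)$ leftward along the tightly nested segment $\set{H^1_\ell, \dotsc, H^1_{j-1}}$, using \remref{ExtendedQuad}); the fact, when $j > \ell$, that $h\bH^1_\ell \in \half^1$ with $H^1_\ell \subset h\bH^1_\ell$ by \lemref{LeftQuad}\ref{leftquad2}; and the exact location of $v$ relative to $H^1_{\ell-1}$, $g^{-1}H^1_r$ and $h\bH^1_\ell$. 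Combining these should force $v$ into a quadrant disjoint from $\ess_g$ (alternatively one produces a vertex lying in $H^1_\ell$ but not $h\bH^1_j$ and invokes \corref{LeftQuad}, or produces a northwest quadrant further left and invokes \lemref{ForcingOverlap} with \lemref{NoTouching} in the style of \propref{Horizontal}), the desired contradiction. The main obstacle is precisely this last step: one must track four half-spaces simultaneously ($H^1_j$, $g^{-1}H^1_r$, $h\bH^1_\ell$, and the shifted quadrant corner) in the Euclidean picture, treat the sub-cases $j = \ell$ and $j > \ell$ separately, and verify that the system of coordinate inequalities forced on $v$ is inconsistent. Everything else --- the reductions, the $\ell = 0$ case, and the routine quadrant-chasing --- is straightforward.
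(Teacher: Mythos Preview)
Your $\ell = 0$ case is fine and essentially equivalent to the paper's (the paper packages the same coordinate check into a single application of \corref{LeftQuad} with the vertex $v = (1,0,\dotsc,0)$, but your direct argument via \lemref{LeftQuad}\ref{leftquad1} and the origin works just as well).

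The $\ell > 0$ case, however, is genuinely incomplete. You correctly invoke \lemref{ForcingNorthwest} to produce the vertex $v$ with $v \notin g^{-1}H^1_r$ and $v \in h\bH^1_\ell$, but then you stop and only list tools that ``should'' finish the job. None of your suggested routes --- forcing $v$ into a forbidden quadrant by tracking inequalities in the $x_1x_p$--plane, or manufacturing a \lemref{ForcingOverlap}/\lemref{NoTouching} contradiction in the $x_1$ direction --- is developed, and in fact staying in the $x_1$--coordinate does not lead anywhere clean. The missing idea is to \emph{change coordinates}: let $i$ be the direction with $g^{-1}\gamma \subset \half^i$, make $x_i$ horizontal, and work with the segment $g^{-1}\gamma$ and the element $f = hg$ instead of $\gamma$ and $h$. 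One first disposes of the degenerate possibility $h\bH^1_j \in \half^i$ (then $g^{-1}H^1_r$ and $h\bH^1_j$ are parallel in $\R^d$, hence nested in $\ess_g$, and the origin forces $g^{-1}H^1_r \supset h\bH^1_j$, contradicting the existence of $v$). Once $h\bgamma \not\subset \half^i$, take $j'$ the \emph{largest} index with $h\bH^1_{j'} \notin \half^i$; since $v \in h\bH^1_\ell \subset h\bH^1_{j'}$ and $v \notin g^{-1}H^1_r$, \corref{RightQuad} (applied to $\{g^{-1}H^1_\ell,\dotsc,g^{-1}H^1_r\}$) says $Q(g^{-1}H^1_{j'}, h\bH^1_{j'})$ faces southeast, i.e.\ $h\bH^1_{j'} \supset g^{-1}H^1_{j'}$. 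But the origin lies in $g^{-1}H^1_{j'}$ and not in $h\bH^1_{j'}$, a contradiction.

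The point is that the vertex $v$ encodes information relative to $g^{-1}\gamma$, not $\gamma$, and the argument only closes after you pivot to that segment and its coordinate direction. Your outline never makes this move.
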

  
  \begin{proof}
    
    {\bf Case 1: $\ell=0$}

    In other words, $\gamma=\set{H^1_0,\ldots,H^1_r}$. Let $v$ be the
    vertex of $\ess_g$ with coordinates $v_1 = 1$ and $v_k = 0$ for all $k
    > 1$. Note that $v \in H^1_0$ and $H^1_0$ is the only element in
    $[A,gA)$ with this property. Therefore, since $h\bH^1_r \in [A,gA)$, if
    $v \in h\bH^1_r$, then we must have $h\bH^1_r = H^1_0$. But this
    contradicts that $\gamma$ and $h\bgamma$ cannot overlap by
    \lemref{NoTouching}, so $v \notin h\bH^1_r$. Since
    $h\bH^1_r \supset h\bH^1_j$, $v \notin
    h\bH^1_j$. Now apply \corref{LeftQuad} using 
    \[ 
      \set{K_0, \dotsc, K_m} = \set{H^1_0, \dotsc, H^1_r}, \quad i=1,
      \quad f=h, \quad K_k = H^1_j,
    \] 
    to obtain that $Q(H^1_j,h\bH^1_j)$ must face northwest. 

    {\bf Case 2: $\ell>0$}
    
    We will assume $Q(H^1_j,h\bH^1_j)$ faces southeast and derive a
    contradiction. By \lemref{ForcingNorthwest}, there exists a vertex $v$
    in $\ess_g$ such that $v \notin g^{-1}H^1_r$ and $v \in h\bH^1_\ell$.
    Note for any $j$ between $\ell$ and $r$, $h\bH^1_\ell \subset
    h\bH^1_j$, so $v \in h\bH^1_j$.
    
    Let $i$ be the coordinate with $g^{-1}\gamma \subset \half^i$. If
    $h\bH^1_j \in \half^i$, then $g^{-1}H^1_r$ are $h\bH^1_j$ are parallel
    in $\R^d$, and hence are nested in $\ess_g$. 
    Since $h\bH^1_r \in [A, gA)$, we have $o \not\in h\bH^1_r$. Then,
    since $j < r$ we have $h\bH^1_r \supset h \bH^1_j$, and therefore
    $o \not\in h\bH^1_j$. However, $o \in g^{-1}H^1_r$ and therefore
    $g^{-1}H^1_r \supset h\bH^1_j$. This contradicts the existence of
    $v$. Therefore, we may assume $h\bH^1_j \notin \half^i$. 

    We now forget coordinate $x_1$ and designate $x_i$ to be the horizontal
    coordinate. Since $h\bgamma$ is not entirely contained in $\half^i$,
    there is a largest integer $j'$ between $\ell$ and $r$ such that
    $h\bH^1_{j'} \notin \half^i$. Since $v \notin g^{-1}H^1_r$ and $v \in
    h\bH^1_{j'}$, by \corref{RightQuad} using \[ \set{K_0,\ldots,K_m} =
    \set{g^{-1}H^1_\ell,\ldots,g^{-1}H^1_r}, \quad f=hg, \quad K_k =
    g^{-1}H^1_{j'},
    \] 
    we obtain that $Q(g^{-1}H^1_{j'},h\bH^1_{j'})$ faces southeast. That
    is, $h\bH^1_{j'} \supset g^{-1}H^1_j$, but this is impossible since $o
    \in g^{-1}H^1_{j'}$ and $o \notin h\bH^1_{j'}$. This yields a
    contradiction under the assumption that $Q(H^1_j,h\bH^1_j)$ faces
    southeast, as desired.\qedhere 
    
  \end{proof}

  The next proposition is analogous to the previous one, but the
  situation is not entirely symmetric because of the assumption
  throughout that the largest half-space of $h\bgamma$ lies in $[A,
  gA)$. For this reason, the next proposition requires an independent
  proof. 
  
  \begin{proposition} \label{Prop:Southeast}

    Let $\gamma = \set{H^1_\ell,\ldots,H^1_r}$ be a maximal $g$--nested
    subsegment of\/ $[A,gA) \cap \half^1$. Suppose $h \bgamma \subset
    \axis_g^+$, $h \bH^1_r \in [A,gA)$, and $h\bgamma \not\subset
    \half^1$ for some $h \in G$. Let $x_1$ be horizontal. Let $j$
    be the largest integer between $\ell$ and $r$ such that $h\bH^1_j
    \notin \half^1$. Then the quadrant $Q(H^1_j,h\bH^1_j)$ faces
    southeast. 

  \end{proposition}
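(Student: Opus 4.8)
The plan is to prove \propref{Southeast} by the same method as \propref{Northwest}, reflecting the picture left--right so that ``northwest'' and ``southeast'' swap roles: the tools \lemref{ForcingNorthwest} and \corref{LeftQuad} get replaced by their mirror images \lemref{ForcingSoutheast} and \corref{RightQuad}, and \lemref{RightQuad} plays the part that its companion \lemref{LeftQuad} plays in \propref{Northwest}. Following that proof I would split into Case~1, $r = n$, and Case~2, $r < n$, and argue throughout by contradiction, assuming $Q(H^1_j, h\bH^1_j)$ faces northwest. Two consequences of the standing hypothesis $h\bH^1_r \in [A,gA) = [o,go]$ are used repeatedly: $o$ lies in no half-space of $h\bgamma$ (each is contained in $h\bH^1_r$, and $o \notin h\bH^1_r$), and $go \in h\bH^1_r$.

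In Case~2 ($r < n$), \lemref{Nested} gives $gH^1_\ell \trans H^1_{r+1}$ in $\ess_g$, so \lemref{ForcingSoutheast} applies: either $Q(H^1_j,h\bH^1_j)$ faces southeast and we are done, or there is a vertex $v$ of $\ess_g$ with $v \in gH^1_\ell$ and $v \notin h\bH^1_r$. Assuming the latter, let $i$ be the coordinate with $g\gamma \subset \half^i$. If $h\bgamma \subset \half^i$ then $gH^1_\ell$ and $h\bH^1_r$ are parallel in $\R^d$, hence nested in $\ess_g$, and $v \in gH^1_\ell \setminus h\bH^1_r$ forces $h\bH^1_r \subset gH^1_\ell$; but $go \in h\bH^1_r$ then gives $go \in gH^1_\ell$, contradicting $o \notin H^1_\ell$. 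Otherwise, designate $x_i$ horizontal, let $j''$ be the smallest index with $h\bH^1_{j''} \notin \half^i$, and apply \corref{LeftQuad} to $\set{gH^1_\ell,\dots,gH^1_r}$ with $f = hg^{-1}$ and the vertex $v$ (legitimate since $v \in gH^1_\ell$ and, as $h\bH^1_{j''} \subseteq h\bH^1_r$, also $v \notin h\bH^1_{j''}$), obtaining $gH^1_{j''} \supset h\bH^1_{j''}$ in $\ess_g$. When $j'' = r$ this is immediately absurd, since $go \in h\bH^1_r$ while $gH^1_r$, lying in $[go,g^2o]$, does not contain $go$; for $j'' < r$ one must still rule out the possibility by tracking the positions of $o$, $go$, $g^2o$ relative to these half-spaces and to the tightly nested segment $h\bgamma$, much as in the endgame of the proof of \propref{Northwest}.

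In Case~1 ($r = n$), suppose first $j < r$. Then $h\bH^1_{j+1},\dots,h\bH^1_n$ all lie in $\half^1$, and \lemref{RightQuad} (with $\sigma = \gamma$, $f = h$, $x_1$ horizontal, so its index ``$k$'' equals $j-\ell < m$) gives, via part~\ref{rightquad2}, that $H^1_n \supset h\bH^1_n$ in $\ess_g$. But $h\bH^1_n = h\bH^1_r \in [o,go] \cap \half^1 = \set{H^1_0,\dots,H^1_n}$, and since $\half^1$ is linearly ordered in $\R^d$ in a way that induces its order in $\ess_g$ (\remref{Square}), $H^1_n \supset h\bH^1_n$ would force $h\bH^1_n = H^1_t$ with $t > n$, which is impossible. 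There remains the sub-case $j = r = n$: here $h\bH^1_n$ is itself the out-of-$\half^1$ half-space, and there is no half-space of $\gamma$ or of $[A,gA)\cap\half^1$ beyond $H^1_n$ from which to extract a ``slack'' vertex for \corref{RightQuad}. In this sub-case I would argue directly: if $Q(H^1_n, h\bH^1_n)$ faced northwest then $H^1_n \supset h\bH^1_n$ in $\ess_g$, so $\ess_g$ avoids the corresponding northwest quadrant of \lemref{Quadrant}; one then uses that $h\bH^1_n \in [o,go]$ places $\partial(h\bH^1_n)$ inside the fundamental domain, that $H^1_n$ is the \emph{last} $\half^1$ half-space of $[o,go]$ (so $(go)_1$ is exactly one step past $\partial H^1_n$), and the edge of $\ess_g$ through $go$ dual to $H^1_n$, to exhibit a vertex of $\ess_g$ in the forbidden quadrant --- a contradiction.

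The main obstacle is precisely the ``$\gamma$ reaches the far end'' configurations: the sub-case $j = r = n$ in Case~1, and the analogous interior positions in the endgame of Case~2. In \propref{Northwest} the symmetric hypothesis $\ell = 0$ forces $H^1_\ell = H^1_0 \in A$ onto an edge of the chosen maximal cube $C$, which hands one the clean vertex $(1,0,\dots,0)$; here, by contrast, what we control is the \emph{largest} half-space $h\bH^1_r$ of $h\bgamma$, so $h\bgamma$ may dangle arbitrarily far forward along the combinatorial axis and the off-the-shelf quadrant-shifting vertex arguments do not apply directly. Making the bookkeeping precise --- recording for each $h\bH^1_k$ which coordinate direction it occupies and which of $o$, $go$, $g^2o$ it contains --- is the delicate part; the rest is a routine mirror image of the results of \secref{TightlyNested}.
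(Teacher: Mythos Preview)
Your overall architecture---split into $r=n$ and $r<n$, mirror the proof of \propref{Northwest} via \lemref{ForcingSoutheast}, \lemref{RightQuad}, and \corref{RightQuad}---is exactly right and matches the paper. The sub-case $j<r$ of Case~1 is correct and identical to the paper's. However, the two places you flag as ``the delicate part'' are genuine gaps, and your sketches there do not work as stated.

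\textbf{Case 1, $j=r=n$.} Your proposed argument needs an edge of $\ess_g$ through $go$ dual to $H^1_n$, but there is no reason such an edge exists: the axis through $o$ and $go$ crosses $\partial H^1_n$ once, but not necessarily at $go$, and $(go)_1 = n+1$ only places $go$ on the \emph{boundary} of the northwest quadrant, not inside it. The paper's fix is to use the \emph{extended} taut segment: $\{H^1_n, H^1_{n+1}\}$ is tightly nested by \propref{TautEmbedding}\ref{taut3}, so \remref{ExtendedQuad} gives the extended northwest quadrant $Q(H^1_{n+1}, h\bH^1_n)$, i.e.\ $H^1_{n+1} \supset h\bH^1_n$. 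Since $H^1_{n+1} \in gA$ we have $go \notin H^1_{n+1}$, while $go \in h\bH^1_n$ by hypothesis---contradiction. You explicitly say ``there is no half-space of $[A,gA)\cap\half^1$ beyond $H^1_n$,'' which is true, but $H^1_{n+1}$ lives in $[A,gA]\cap\half^1$ and is exactly what is needed.

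\textbf{Case 2, the $j''<r$ endgame.} Here the paper does \emph{not} track $g^2o$ or do further bookkeeping; it closes out with \lemref{ForcingOverlap}. The missing step is to produce a \emph{southeast} quadrant at the \emph{largest} index, to pair with your northwest quadrant at the smallest index $j''$. For this you need $h\bH^1_r \notin \half^i$: when $j<r$ one has $h\bH^1_r \in \half^1$ (by the choice of $j$), and $i\neq 1$ because $gH^1_\ell \trans H^1_{r+1}$; the remaining case $j=r$, $i=i'$ is dispatched directly (your sub-case $h\bgamma\subset\half^i$ handles part of this, but not all of it). Once $h\bH^1_r\notin\half^i$, apply \corref{RightQuad} to $\{gH^1_\ell,\dots,gH^1_r\}$ with $f=hg^{-1}$ and the vertex $go$ (legitimate since $go\notin gH^1_r$ and $go\in h\bH^1_r$) to get $Q(gH^1_r,h\bH^1_r)$ facing southeast. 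Then \lemref{ForcingOverlap} and \lemref{NoTouching} give the contradiction.
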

  
  \begin{proof}

    In the following, let $i$ and $i'$ be the coordinates with $g\gamma
    \subset \half^i$ and $h\bH^1_j \in \half^{i'}$. We will assume that
    $Q(H^1_j,h\bH^1_j)$ faces northwest, that is, $H^1_j \supset
    h\bH^1_j$, and derive a contradiction.
    
    {\bf Case 1: $r=n$} 
    
    In other words, $\gamma = \set{H^1_\ell,\ldots,H^1_n}$. 
    
    We first consider the sub-case that $j=n$. Recall that the extended
    segment \[ [A,gA] \cap \half^1 = \set{H_0^1, \ldots, H^1_n,
    H^1_{n+1}}\] is tightly nested; in particular, the pair
    $\set{H^1_n,H^1_{n+1}}$ is tightly nested. Therefore, by
    \remref{ExtendedQuad}, $H^1_{n+1}$ and $h\bH^1_n$ must also generate a
    quadrant that faces northwest; in other words, $H^1_{n+1} \supset
    h\bH^1_n$. Let $go$ be the translate of the origin by $g$. Since $[A,gA)
    = [o,go]$ and $H^1_{n+1} \in gA$, we must have $go \notin H^1_{n+1}$.
    Thus, $go \notin h\bH^1_n$, but this contradicts the assumption that
    $h\bH^1_n=h\bH^1_r \in [A,gA)$. 
    
    Now suppose $j<n$. By \lemref{RightQuad}\ref{rightquad2}, using 
    \[ 
      \set{K_0,\ldots,K_m} = \set{H^1_\ell,\ldots,H^1_n}, \quad i=1,
      \quad f=h, \quad K_k = H^1_j,
    \] 
    we obtain that $h\bH^1_n \in \half^1$ and $H^1_n \supset h\bH^1_n$. So
    we must have that $h\bH^1_n = H^1_b$, for some $b > n$. But this
    contradicts $h\bH^1_n \in [A,gA) \cap \half^1 =
    \set{H^1_0,\ldots,H^1_n}$.
    
    {\bf Case 2: $r<n$} 

    In this case, we can apply \lemref{ForcingSoutheast} to the assumption
    that $Q(H^1_j,h\bH^1_j)$ faces northwest, yielding a vertex $v$ in
    $\ess_g$ with $v \in gH^1_\ell$ and $ v \notin h\bH^1_r$. 
    
    If $j=r$ and $i=i'$, then $gH^1_\ell$ and $h\bH^1_r$ are parallel in
    $\R^d$ and hence are nested in $\ess_g$. Since $go \in h\bH^1_r$ and
    $go \notin gH^1_\ell$, $h\bH^1_r \supset gH^1_\ell$. But this
    contradicts the existence of $v$.

    In all other cases we claim $h\bH^1_r \notin \half^i$. This is
    true when $j=r$ and $i \ne i'$, since $h\bH^1_r = h\bH^1_j \in
    \half^{i'}$. In the situation that $j < r$, then by the choice of
    $j$, the suffix $\alpha = \set{H^1_{j+1},\ldots,H^1_r}$ has
    $h\balpha \subset \half^1$. But since $r<n$, $gH^1_\ell \trans
    H^1_{r+1}$ by \lemref{Nested}\ref{nested2}; in particular, since
    $gH^1_\ell \in \half^i$, we have $i \ne 1$. This shows that
    $h\bH^1_r \notin \half^i$. 
   
    We now forget coordinate $x_1$ and designate $x_i$ to be the horizontal
    coordinate. Let $j'$ be the smallest integer between $\ell$ and $r$
    such that $h\bH^1_{j'} \notin \half^i$. Such $j'$ exists since
    $h\bH^1_r \notin \half^i$. Our goal now is to use $go$ and $v$ to
    determine which ways the quadrants $Q(gH^1_{j'}, h\bH^1_{j'})$ and
    $Q(gH^1_r, h\bH^1_r)$ face. 
    
    Now set
    \[ 
      \set{K_0,\ldots,K_m} = \set{gH^1_\ell,\ldots,gH^1_r}, \quad f =
      hg^{-1}. 
    \] 
    Since $go \notin gH^1_r$ and $go \in h\bH^1_r$, by \corref{RightQuad},
    where $K_k=gH^1_r$, the quadrant $Q(gH^1_r,h\bH^1_r)$ faces southeast.
    On the other hand, since $v \in gH^1_\ell$ and $v \notin h\bH^1_{j'}$,
    by \corref{LeftQuad}, where $K_k=gH^1_{j'}$, the quadrant
    $Q(gH^1_{j'},h\bH^1_{j'})$ faces northwest. Since $j'$ is the smallest
    index between $\ell$ and $r$ for which $h\bH^1_{j'} \notin \half^i$,
    and $r$ is the largest index, the conclusion of \lemref{ForcingOverlap}
    yields a non-trivial subsegment $\alpha \subset g\gamma$ such that
    $h\balpha \subset \half^i$ and $\alpha$ and $h\alpha$ overlap. But this
    is impossible by \lemref{NoTouching}. This contradiction was obtained
    under the assumption that $Q(H^1_j,h\bH^1_j)$ faces northwest,
    concluding the proof. \qedhere 

  \end{proof}
  
  We now tie everything together for the proof of \thmref{Gap}.

  \begin{proof}[Proof of \thmref{Gap}]
    
    Given a hyperbolic element $g \in G$, fix a taut $\gen{g}$--equivariant
    embedding $\ess_g \into \R^d$ using
    \propref{TautEmbedding}, as discussed in the beginning of 
    \secref{RAAG-like}. Let $\gamma = \set{H^1_{\ell}, \dotsc,
      H^1_r} \subseteq [A, gA)$ be a maximal $g$--nested subsegment of $[A,
    gA) \cap \half^1$. 

    Suppose $h \bgamma \in \axis_g^+$ for some $h \in G$. Replacing $h
    \bgamma$ by a $\gen{g}$--translate if necessary, we can assume that
    $h\bH^1_r \in [A, gA)$. Declare $x_1$ to be the horizontal
    coordinate. 

    By \propref{Horizontal}, $h\bgamma$ cannot be entirely contained in
    $\half^1$. Let $j$ be the smallest index such that $h\bH^1_j
    \not\in \half^1$. Then, by \propref{Northwest}, the quadrant
    $Q(H^1_j, h\bH^1_j)$ faces northwest. Let $j'$ be the largest
    index such that $h\bH^1_{j'}\not\in \half^1$. By \propref{Southeast}
    the quadrant $Q(H^1_{j'}, h\bH^1_{j'})$ faces
    southeast. \lemref{ForcingOverlap} now provides a contradiction, via
    \lemref{NoTouching}. 

    Therefore, no copy of $\bgamma$ appears in $\axis_g^+$. Now consider
    the counting functions $c_\g$ and $c_{\bg}$ from
    \secref{RAAG-like}. We have $c_{\bg}(o, g^no) = 0$ for all $n >
    0$. Since $\g$ is $g$--nested we also have $c_\g(o, g^no) = n$ for $n
    > 0$. Choosing the basepoints $x_{g^n} = o$ for all such $n$ (noting
    that $X_{g} \subseteq X_{g^n}$), the resulting homogeneous
    quasimorphism $\widehat{\varphi}_\g$ has value $1$ on $g$. Since
    $\widehat{\varphi}_\g$ has defect at most $12$, by \lemref{Defect},
    Bavard Duality (\lemref{SCL}) tells us that $\scl(g) \geq
    1/24$. \qedhere 

  \end{proof}

  \small

\begin{thebibliography}{BCG{\etalchar{+}}09}

\bibitem[Bav91]{Bavard}
Christophe Bavard, \emph{Longueur stable des commutateurs}, Enseign. Math. (2)
  \textbf{37} (1991), no.~1-2, 109--150. \MR{1115747 (92g:20051)}

\bibitem[BBF16a]{BBF1}
Mladen Bestvina, Ken Bromberg, and Koji Fujiwara, \emph{Bounded cohomology with
  coefficients in uniformly convex {B}anach spaces}, Comment. Math. Helv.
  \textbf{91} (2016), no.~2, 203--218. \MR{3493369}

\bibitem[BBF16b]{BBF2}
\bysame, \emph{Stable commutator length on mapping class groups}, Ann. Inst.
  Fourier (Grenoble) \textbf{66} (2016), no.~3, 871--898. \MR{3494163}

\bibitem[BCG{\etalchar{+}}09]{BCGNW}
J.~Brodzki, S.~J. Campbell, E.~Guentner, G.~A. Niblo, and N.~J. Wright,
  \emph{Property {A} and {$\rm CAT(0)$} cube complexes}, J. Funct. Anal.
  \textbf{256} (2009), no.~5, 1408--1431. \MR{2490224 (2010i:20044)}

\bibitem[BH99]{BH}
Martin~R. Bridson and Andr{\'e} Haefliger, \emph{Metric spaces of non-positive
  curvature}, Grundlehren der Mathematischen Wissenschaften [Fundamental
  Principles of Mathematical Sciences], vol. 319, Springer-Verlag, Berlin,
  1999. \MR{1744486 (2000k:53038)}

\bibitem[Bro81]{Brooks}
Robert Brooks, \emph{Some remarks on bounded cohomology}, Riemann surfaces and
  related topics: {P}roceedings of the 1978 {S}tony {B}rook {C}onference
  ({S}tate {U}niv. {N}ew {Y}ork, {S}tony {B}rook, {N}.{Y}., 1978), Ann. of
  Math. Stud., vol.~97, Princeton Univ. Press, Princeton, N.J., 1981,
  pp.~53--63. \MR{624804 (83a:57038)}

\bibitem[Cal09]{scl}
Danny Calegari, \emph{scl}, MSJ Memoirs, vol.~20, Mathematical Society of
  Japan, Tokyo, 2009. \MR{2527432 (2011b:57003)}

\bibitem[CF10]{CalegariFujiwara}
Danny Calegari and Koji Fujiwara, \emph{Stable commutator length in
  word-hyperbolic groups}, Groups Geom. Dyn. \textbf{4} (2010), no.~1, 59--90.
  \MR{2566301 (2011a:20109)}

\bibitem[CFI16]{CFI}
Indira Chatterji, Talia Fern\'os, and Alessandra Iozzi, \emph{The median class
  and superrigidity of actions on {$\rm CAT(0)$} cube complexes}, J. Topol.
  \textbf{9} (2016), no.~2, 349--400, With an appendix by Pierre-Emmanuel
  Caprace. \MR{3509968}

\bibitem[CFL16]{CFL}
Matt Clay, Max Forester, and Joel Louwsma, \emph{Stable commutator length in
  {B}aumslag-{S}olitar groups and quasimorphisms for tree actions}, Trans.
  Amer. Math. Soc. \textbf{368} (2016), no.~7, 4751--4785. \MR{3456160}

\bibitem[CN05]{ChatterjiNiblo}
Indira Chatterji and Graham Niblo, \emph{From wall spaces to {$\rm CAT(0)$}
  cube complexes}, Internat. J. Algebra Comput. \textbf{15} (2005), no.~5-6,
  875--885. \MR{2197811 (2006m:20064)}

\bibitem[CS11]{CapraceSageev}
Pierre-Emmanuel Caprace and Michah Sageev, \emph{Rank rigidity for {CAT}(0)
  cube complexes}, Geom. Funct. Anal. \textbf{21} (2011), no.~4, 851--891.
  \MR{2827012 (2012i:20049)}

\bibitem[Dav08]{Davis}
Michael~W. Davis, \emph{The geometry and topology of {C}oxeter groups}, London
  Mathematical Society Monographs Series, vol.~32, Princeton University Press,
  Princeton, NJ, 2008. \MR{2360474 (2008k:20091)}

\bibitem[DH91]{DuncanHowie}
Andrew~J. Duncan and James Howie, \emph{The genus problem for one-relator
  products of locally indicable groups}, Math. Z. \textbf{208} (1991), no.~2,
  225--237. \MR{1128707 (92i:20040)}

\bibitem[EF97]{EpsteinFujiwara}
David B.~A. Epstein and Koji Fujiwara, \emph{The second bounded cohomology of
  word-hyperbolic groups}, Topology \textbf{36} (1997), no.~6, 1275--1289.
  \MR{1452851 (98k:20088)}

\bibitem[Fer]{Fernos}
Talia Fern\'os, \emph{The {F}urstenberg-{P}oisson boundary and {C}{A}{T}(0)
  cube complexes}, to appear in Ergodic Theory Dynam. Systems,
  \url{http://dx.doi.org/10.1017/etds.2016.124}.

\bibitem[Gro82]{Gromov}
Michael Gromov, \emph{Volume and bounded cohomology}, Inst. Hautes \'Etudes
  Sci. Publ. Math. (1982), no.~56, 5--99 (1983). \MR{686042 (84h:53053)}

\bibitem[Hag07]{Haglund}
Fr{\'e}d{\'e}ric Haglund, \emph{Isometries of {CAT}(0) cube complexes are
  semi-simple}, preprint, \url{http://arxiv.org/abs/0705.3386}, 2007.

\bibitem[Heu18]{Heuer}
Nicolaus Heuer, \emph{Gaps in scl for amalgamated free products and
  {R}{A}{A}{G}s}, preprint, \url{http://arxiv.org/abs/1802.01107}, 2018.

\bibitem[HO13]{HullOsin}
Michael Hull and Denis Osin, \emph{Induced quasicocycles on groups with
  hyperbolically embedded subgroups}, Algebr. Geom. Topol. \textbf{13} (2013),
  no.~5, 2635--2665. \MR{3116299}

\bibitem[HW08]{HaglundWise}
Fr{\'e}d{\'e}ric Haglund and Daniel~T. Wise, \emph{Special cube complexes},
  Geom. Funct. Anal. \textbf{17} (2008), no.~5, 1551--1620. \MR{2377497
  (2009a:20061)}

\bibitem[Kob12]{Koberda}
Thomas Koberda, \emph{Right-angled {A}rtin groups and a generalized isomorphism
  problem for finitely generated subgroups of mapping class groups}, Geom.
  Funct. Anal. \textbf{22} (2012), no.~6, 1541--1590. \MR{3000498}

\bibitem[Mit84]{Mitsumatsu}
Yoshihiko Mitsumatsu, \emph{Bounded cohomology and {$l^1$}-homology of
  surfaces}, Topology \textbf{23} (1984), no.~4, 465--471. \MR{780736}

\bibitem[Nic04]{Nica}
Bogdan Nica, \emph{Cubulating spaces with walls}, Algebr. Geom. Topol.
  \textbf{4} (2004), 297--309 (electronic). \MR{2059193 (2005b:20076)}

\bibitem[Rol98]{Roller}
Martin Roller, \emph{Poc sets, median algebras and group actions. {A}n extended
  study of {D}unwoody's construction and {S}ageev's theorem},
  Habilitationschrift, Regensberg, 1998.

\bibitem[Sag95]{Sageev}
Michah Sageev, \emph{Ends of group pairs and non-positively curved cube
  complexes}, Proc. London Math. Soc. (3) \textbf{71} (1995), no.~3, 585--617.
  \MR{1347406 (97a:20062)}

\bibitem[Wis12]{Wise}
Daniel~T. Wise, \emph{From riches to raags: 3-manifolds, right-angled {A}rtin
  groups, and cubical geometry}, CBMS Regional Conference Series in
  Mathematics, vol. 117, Published for the Conference Board of the Mathematical
  Sciences, Washington, DC; by the American Mathematical Society, Providence,
  RI, 2012. \MR{2986461}

\end{thebibliography}

\newcommand{\etalchar}[1]{$^{#1}$}
\def\cprime{$'$}
\providecommand{\bysame}{\leavevmode\hbox to3em{\hrulefill}\thinspace}
\providecommand{\MR}{\relax\ifhmode\unskip\space\fi MR }
\providecommand{\MRhref}[2]{%
  \href{http://www.ams.org/mathscinet-getitem?mr=#1}{#2}
}
\providecommand{\href}[2]{#2}

\end{document}